\newcommand{\dom}{\text{dom }}
\newtheorem{theorem}{Theorem}
\newtheorem{question}{Question}
\newtheorem{example}[theorem]{Example}
\newtheorem{lemma}[theorem]{Lemma}
\newtheorem{notation}[theorem]{Notation}
\newtheorem{proposition}[theorem]{Proposition}
\DeclareMathOperator{\supp}{supp}
\begin{document}

\title[Small powers of countably compact groups]{A van Douwen-like ZFC theorem for small powers of countably compact groups without non-trivial convergent sequences }

\author[A. H. Tomita]{Artur Hideyuki Tomita}
\address{Instituto de Matem\'atica e Estat\'istica, Universidade de S\~ao Paulo \\ Rua do Mat\~ao, 1010, CEP 05508-090, S\~ao Paulo, Brazil}
\email{tomita@ime.usp.br}

\subjclass[2010]{Primary 54H11, 22A05; Secondary 54A35, 54G20.}

\dedicatory{To the Memory of Professor W. W. Comfort }

\keywords{countable compactness, van Douwen, topological group, Tychonoff product, ZFC, non trivial convergent sequences}
\date{}

\footnote{The author has received support from FAPESP Aux\' \i lio regular de pesquisa 2012/01490-9 and CNPq Produtividade em Pesquisa 305612/2010-7 during the research that led to this work. The author received support from CNPq Produtividade em Pesquisa 307130/2013-4 and CNPq Projeto Universal N. 483734/2013-6 Universal during the preparation and revision of the manuscript. Final revision under support from FAPESP 2016/26216-8.}

\begin{abstract}

We show that if $\kappa \leq \omega$ and there exists a group topology without non-trivial convergent sequences on an  Abelian group $H$ such that $H^n$ is countably compact for each $n<\kappa$ then there exists a topological group $G$ such that $G^n$ is countably compact for each $n <\kappa$ and $G^{\kappa}$ is not countably compact. If  in addition $H$ is torsion, then the result above holds for $\kappa=\omega_1$. Combining with other results in the literature, we show that:

 $a)$ Assuming ${\mathfrak c}$ incomparable selective ultrafilters, for each $n \in \omega$, there exists a group topology on the free Abelian group $G$ such that $G^n$ is countably compact and $G^{n+1}$ is not countably compact. (It was already know for $\omega$).

 $b)$ If  $\kappa \in \omega \cup \{\omega\} \cup \{\omega_1\}$, there exists in ZFC a topological group $G$ such that $G^\gamma$ is countably compact for each cardinal $\gamma <\kappa$ and $G^\kappa$ is not countably compact.
\end{abstract}



\maketitle

\section{Introduction}

\subsection{Countably compact groups without non-trivial convergent sequences and van Douwen's theorem}

In 1980, van Douwen \cite{vD80} showed in ZFC that if there exists a countably compact group without non-trivial convergent sequences that is a subgroup of $2^{\mathfrak c}$, then there exist two countably compact subgroups  whose product is not countably compact. In the same paper, van Douwen produced  a countably compact subgroup
of $2^{\mathfrak c}$, without non-trivial convergent sequences from Martin's Axiom (a CH example was earlier produced by H\' ajnal and Juh\' asz \cite{HaJu76}). Combining his two results, it follows that countable compactness in the class of topological groups is not productive when Martin's Axiom holds.

In 1991, Hart and van Mill \cite{HvM91}
produced from Martin's Axiom for countable posets the first countably compact group whose square is not countably compact. The approach differs from van Douwen's as their example contains many
convergent sequences.

 In 2004, Tomita and Watson \cite{ToWa04} constructed $p$-compact groups (in particular countably compact groups) from a selective ultrafilter $p$. With this technique, Garcia-Ferreira, Tomita and Watson \cite{GaToWa05} obtained a countably compact group without non-trivial convergent sequences from a single selective ultrafilter. 
 
 Szeptycki and Tomita \cite{SzTo09} showed that in the Random model there exists a countably compact group without non-trivial convergent sequences, giving  the first example that does not depend on selective ultrafilters. In \cite{SzTo?}, it was showed that the countable power of this example is countably compact.
 
 Recently, Hru\v sak, van Mill, Ramos and Shelah showed in ZFC that there exists a Boolean  group without non-trivial convergent sequences, answering a question of van Douwen from 1980. Using van Douwen's theorem, they also answer in ZFC a 1966 question of Comfort about the non-productivity of countable compactness in the class of topological groups. Hru\v sak has informed that the construction can be easily modified to obtain $G^\omega$ countably compact.

In 2005, Tomita \cite{To05} improved van Douwen's theorem showing that if there exists a countably compact Abelian group without non-trivial convergent sequences then there exists a countably compact Abelian group whose square is not countably compact.

Tkachenko \cite{Tk90} showed that under CH, there exists a countably compact free Abelian group.  Koszmider, Tomita and Watson \cite{KoToWa00} obtained one from Martin's Axiom for countable posets and Madariaga Garcia and Tomita \cite{MaTo07} obtained an example from the existence of ${\mathfrak c}$ selective ultrafilters.

\subsection{Powers of countably compact groups and Comfort's Question}

 Inspired by the example of Hart and van Mill and a result of Ginsburg and Saks \cite{GiSa75}, Comfort asked the following question in the Open Problems in Topology  \cite{Co90}:

\begin{question} (Question 477, Open Problems in Topology, 1990) Is there, for every (not necessarily infinite) cardinal number
$\kappa \leq 2^{\mathfrak c}$, a topological group $G$ such that $G^\gamma$ is countably compact for all
cardinals $\gamma < \kappa$, but $G^\kappa$ is not countably compact?
\end{question}

The result in \cite{GiSa75} implies that the question above only makes sense for cardinals not greater than $2^{\mathfrak c}$.

Partial results for some finite cardinals other than $2$ were obtained in \cite{To96} and \cite{To99} using Martin's Axiom for countable posets. Under the same axiom, finite cardinals were solved in \cite{To05ta}.

In \cite{To05fm} the question was answered consistently. It was showed under some cardinal restriction and assuming the existence of $2^{\mathfrak c}$ incomparable selective ultrafilters, there exists a topological group as in Comfort's question for every cardinal $\leq 2^{\mathfrak c}$.  The examples obtained are subgroups of a product of copies of $2$.

Sanchis and Tomita \cite{SaTo12} showed that if there exists a selective ultrafilter then there exists a topological group of order $2$ without non-trivial convergent sequences as in Comfort's question, for each cardinal $\alpha \leq \omega_1$.

In \cite{KoToWa00} and \cite{MaTo07}, it was showed that there exist  a countably compact group topology on the free Abelian group of cardinality ${\mathfrak c}$ whose square is not countably compact from Martin's Axiom and the existence of ${\mathfrak c}$ selective ultrafilters, respectively.

 Tomita \cite{To?}  showed that, assuming the existence of ${\mathfrak c}$ incomparable selective ultrafilters, there exists a topological free Abelian group without non-trivial convergent sequences whose finite powers are countably compact, improving the square result by Boero and Tomita \cite{BoTo11}.
 
 \subsection{A van Douwen like theorem for Comfort's Question}

In this work we obtain a van Douwen like theorem for  Abelian groups without non-trivial convergent sequences
whose small powers are countably compact.

We show that given a cardinal $\kappa \leq \omega_1$ and there exists a topological Abelian group of finite order such that $H^\gamma$ is countably compact for each cardinal $\gamma<\kappa$, then there exists a topological group $G$ as in Comfort's question $477$ for $\kappa$. 

Applying the recent result of Hru\v sak, van Mill, Ramos and Shelah, it follows that Comfort's Question $477$ is settled in ZFC for every cardinal $\leq \omega_1$.

We also show that if $\kappa \leq \omega$ and there exists a non-torsion topological Abelian group such that $G^\gamma$ is countably compact for each $\gamma<\kappa$, then there exists a topological free Abelian group as in Comfort's question for $\kappa$. Applying this result in the example in \cite{To?}, it follows  from the existence of ${\mathfrak c}$ selective ultrafilters that there exists, for each finite cardinal $M$, a topological free Abelian group $G$ such that $G^M$ is countably compact, but $G^{M+1}$ is not.

\section{Countably compact non torsion Abelian groups}

\subsection{Family of sequences that suffice to keep countable compactness in small powers}

We start defining the families that will help us obtain the countable compactness in small powers.

\begin{notation} Given $m \in \omega$ and a free Abelian group $G$,  define ${\mathcal F}(G,m)$, as the set of $m$-uples $(f_{0} , \ldots , f_{m-1})$, where  $A$ is an infinite subset of $\omega$, with $f_i:\,A\longrightarrow G$ for each $i<m$ and for each nonzero function $s:m \longrightarrow {\mathbb Z}$ there exists a finite set $F_s$ such that the sequence $(\sum_{i<m}s(i).f_i(n):\, n\in A \setminus F_s)$ is one-to-one.

 Define ${\mathcal F}(G,<\kappa)=\bigcup_{m<\kappa} {\mathcal F}(G,m)$ for $\kappa \leq \omega$.

If $G={\mathbb Z}^{( {\mathfrak c} \times \omega)}$ then we will write  ${\mathcal F}(<\kappa)$ and $ {\mathcal F}(m)$.
 \end{notation}

 \begin{notation}
Given  $F\in {\mathbb Z}^{({\mathfrak c}\times \omega)}$ and a family $\{z_{\alpha ,i}:\, (\alpha, i) \in \supp F\}$ of elements of some group $H$, $z_F$  denotes the sum
$\sum_{(\alpha, i) \in \supp F} F(\alpha,i).z_{\alpha,i}$. 

If $A$ is a subset of $\omega$, $g:\, A \longrightarrow {\mathbb Z}^{({\mathfrak c}\times \omega)}$ and $\{z_{\alpha ,i}:\, (\alpha, i) \in \bigcup \{\supp g(n):\, n \in A\} \}$, we denote by $z_g$ the function with domain $ A$ and range $ H$ such that $z_g(n)=z_{g(n)}$, for each $n \in A$.
\end{notation}

Given $M\in \omega$, the set ${\mathcal F}( <M+1)$ will be used often to index ${\mathcal F}(K, <M+1)$, when $K$ is a free Abelian group:

\begin{lemma} \label{lem.basis.to.basis}
Let $X=\{ x_{\alpha, i}:\, \alpha < {\mathfrak c} \text{ and } i<\omega\}$ be a set of generators for an Abelian group $G$.

Then  ${\mathcal F}(G, <M+1)\subseteq\{ (x_{g_{i}})_{i<m}:\, (g_i)_{i<m} \in {\mathcal F}( <M+1) \}$. 
\end{lemma}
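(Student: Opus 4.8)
The plan is to exploit the hypothesis that $X$ generates $G$ in order to present $G$ as a quotient of the free Abelian group $\mathbb{Z}^{(\mathfrak{c}\times\omega)}$. First I would fix the homomorphism $\phi:\mathbb{Z}^{(\mathfrak{c}\times\omega)}\to G$ determined by sending the standard basis element indexed by $(\alpha,i)$ to $x_{\alpha,i}$; by the universal property of the free Abelian group this is well defined, and by the very definition of $z_F$ it satisfies $\phi(F)=x_F$ for every $F\in\mathbb{Z}^{(\mathfrak{c}\times\omega)}$. Since $X$ generates $G$, the image of $\phi$ contains every generator and hence $\phi$ is onto.

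Now I would take an arbitrary tuple $(f_i)_{i<m}\in\mathcal{F}(G,<M+1)$, so that $m<M+1$ and the $f_i$ share a common infinite domain $A\subseteq\omega$. Using surjectivity of $\phi$, for each $i<m$ and each $n\in A$ I would choose some $g_i(n)\in\mathbb{Z}^{(\mathfrak{c}\times\omega)}$ with $\phi(g_i(n))=f_i(n)$, that is $x_{g_i(n)}=f_i(n)$. This defines functions $g_i:A\to\mathbb{Z}^{(\mathfrak{c}\times\omega)}$ with $x_{g_i}=f_i$ for every $i<m$, so the required equality of tuples holds by construction. Here I would stress that $G$ need not be free on $X$, so these lifts are genuine choices and are typically not unique; it will turn out that this lack of uniqueness is harmless.

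It then remains only to verify that $(g_i)_{i<m}\in\mathcal{F}(<M+1)$, which is the one point carrying any content. Fix a nonzero $s:m\to\mathbb{Z}$ and let $F_s$ be the finite set witnessing that $(\sum_{i<m}s(i)f_i(n):n\in A\setminus F_s)$ is one-to-one. Since $\phi$ is a homomorphism, $\phi(\sum_{i<m}s(i)g_i(n))=\sum_{i<m}s(i)f_i(n)$ for every $n\in A$. Consequently, if $\sum_{i<m}s(i)g_i(n)=\sum_{i<m}s(i)g_i(n')$ for some $n,n'\in A\setminus F_s$, applying $\phi$ forces $\sum_{i<m}s(i)f_i(n)=\sum_{i<m}s(i)f_i(n')$ and hence $n=n'$. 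Thus the same $F_s$ witnesses injectivity upstairs, giving $(g_i)_{i<m}\in\mathcal{F}(m)\subseteq\mathcal{F}(<M+1)$. The reason there is essentially no obstacle is that being one-to-one is preserved under taking preimages along an arbitrary map, so the combinatorial hypothesis transfers from the image $G$ to the free group in the easy direction once the lifts are fixed; non-freeness of $G$ on $X$ never intervenes because we lift the $f_i$ rather than attempt to project, so no uniqueness of representation is required.
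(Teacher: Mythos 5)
Your proof is correct and takes essentially the same route as the paper: both arguments lift each $f_i$ through the canonical surjective homomorphism $\Phi:\,{\mathbb Z}^{({\mathfrak c}\times\omega)}\longrightarrow G$ sending $\chi_{\alpha,i}\mapsto x_{\alpha,i}$, note that $\Phi\bigl(\sum_{i<m}s(i).g_i(n)\bigr)=\sum_{i<m}s(i).f_i(n)$, and conclude that the same finite set $F_s$ witnesses injectivity upstairs because equal values upstairs would map to equal values downstairs. The only cosmetic difference is that you spell out the choice of lifts via surjectivity of $\Phi$, whereas the paper simply asserts the existence of the $g_i$ from the fact that $X$ generates $G$.
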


\begin{proof} Let $(y_i)_{i<m}$ be an element of $ {\mathcal F}(G, <M+1)$ with each $y_i$ with domain $A$, for each $i<M$. Since $X$ generates $G$, there exists $g_i :\, A\longrightarrow {\mathbb Z}^{(\mathfrak c\times \omega)}$ such that
	$y_i=x_{g_i}$, for each $i<m$.
	
	Fix a nonzero function $s:m \longrightarrow {\mathbb Z}$. By hypothesis, there exists a finite set $F_s$ such that the sequence $(\sum_{i<m}s(i).y_i(n):\, n\in A \setminus F_s)$ is one-to-one.
	
	Let $\{\chi_{\alpha,i}:\, \alpha < {\mathfrak c} \text{ and } i< \omega\}$ be the canonical basis of ${\mathbb Z}^{({\mathfrak c} \times \omega)}$ and $\Phi$ be the homomorphism from  ${\mathbb Z}^{({\mathfrak c} \times \omega)}$ onto $G$ such that $\Phi(\chi_{\beta,j})=x_{\beta,j}$, for each $\beta <{\mathfrak c}$ and $j<\omega$.
	Now, $\sum_{i<m}s(i).y_i(n)=\sum_{i<m}s(i).x_{g_i}(n)=$
	 
	  $\sum_{i<m}s(i).\sum_{(\beta, j) \in \supp g_i(n)}{g_i}(n)(\beta ,j).x_{\beta, j}=$
	  
	  $\sum_{i<m}s(i).\sum_{(\beta, j) \in \supp g_i(n)}{g_i}(n)(\beta ,j).\Phi(\chi_{\beta, j})=$
	  
	  $\Phi(\sum_{i<m}s(i).\sum_{(\beta, j) \in \supp g_i(n)}{g_i}(n)(\beta ,j).\chi_{\beta, j})=$
	  
	   $\Phi(\sum_{i<m}s(i).{g_i}(n)).$
	   
	   Since $\Phi$ is a function, it follows that 
	   
	   $(\sum_{i<m}s(i).{g_i}(n):\, n \in A \setminus F_s)$ is one to one.

	   Therefore, $(g_i)_{i<m}\in {\mathcal F}(<M+1)$.	  
\end{proof}

The next lemma shows that the sequences in ${\mathcal F}(G,<M+1)$ are sufficient to deal with the countable compactness of the $M$-th power of a free Abelian group $G$, for each positive integer $M$.

\begin{lemma} \label{lemma.reduction.free.abelian.group} Let $M$ be a positive integer, $G$ be a topological free Abelian group and $(f_i)_{i<M}$ be a sequence of functions of domain $A$ in ${\mathcal F}(G,<M+1)$ such that the sequence $(({f_i(n)})_{i<k}:\, n\in A)$ has an accumulation point in $G^k$. Then $G^M$ is countably
compact.
\end{lemma}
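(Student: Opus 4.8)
The plan is to take an arbitrary sequence in $G^M$, reduce it (after passing to an infinite subset and performing a unimodular change of coordinates) to one whose ``free part'' is a member of $\mathcal{F}(G,<M+1)$, apply the hypothesis to that part, and transport the resulting accumulation point back through the change of coordinates. I read the hypothesis as: every $(f_i)_{i<k}\in\mathcal{F}(G,<M+1)$ has an accumulation point in $G^k$.

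First I would fix a sequence $(h_n)_{n\in\omega}$ in $G^M$, write $h_n=(h_n^0,\dots,h_n^{M-1})$, and for $s\in\mathbb{Z}^M$ abbreviate $\langle s,h_n\rangle=\sum_{i<M}s(i)h_n^i\in G$. Since $\mathbb{Z}^M$ is countable, a routine diagonalization (build a decreasing chain of infinite sets, one for each $s$, and take a pseudo-intersection $A$) lets me assume that for every $s\in\mathbb{Z}^M$ the sequence $(\langle s,h_n\rangle:n\in A)$ is either eventually one-to-one or eventually constant; this uses only that a sequence in $G$ either has a one-to-one subsequence or repeats some value infinitely often. I then set $D=\{s\in\mathbb{Z}^M:(\langle s,h_n\rangle:n\in A)\text{ is eventually constant}\}$. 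It is immediate that $D$ is a subgroup (once $A$ is fixed, the sum of two eventually-constant combinations is eventually constant), and I claim $D$ is pure: if $d\cdot w\in D$ with $d\ge 1$, then $d\langle w,h_n\rangle$ is eventually equal to some $c\in G$, and since $G$ is torsion-free the equation $dx=c$ has a unique solution, forcing $\langle w,h_n\rangle$ to be eventually constant, i.e. $w\in D$. Hence $\mathbb{Z}^M/D$ is torsion-free, so free, of some rank $k=M-\operatorname{rank}D$, and I may pick a basis $v_0,\dots,v_{M-1}$ of $\mathbb{Z}^M$ whose first $M-k$ members form a basis of $D$ and whose last $k$ members project to a basis of $\mathbb{Z}^M/D$.

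Define $u_j(n)=\langle v_{(M-k)+j},h_n\rangle$ for $j<k$ and $n\in A$. For a nonzero $s:k\to\mathbb{Z}$ the vector $w=\sum_{j<k}s(j)v_{(M-k)+j}$ is nonzero and lies outside $D$ (its image is a nonzero combination of a basis of the quotient), so $(\sum_{j<k}s(j)u_j(n):n\in A)=(\langle w,h_n\rangle:n\in A)$ is not eventually constant, hence eventually one-to-one by the dichotomy. Thus $(u_j)_{j<k}\in\mathcal{F}(G,k)\subseteq\mathcal{F}(G,<M+1)$, and the hypothesis supplies an accumulation point $(p_0,\dots,p_{k-1})\in G^k$ of $((u_j(n))_{j<k}:n\in A)$. (If $k=0$ then $D=\mathbb{Z}^M$, every coordinate function is eventually constant, and $(h_n)_{n\in A}$ is itself eventually constant, so it accumulates trivially.) To finish, the matrix $[v_j(i)]_{j,i<M}$ lies in $GL_M(\mathbb{Z})$, so the map $\Psi:G^M\to G^M$ sending $(x_i)_{i<M}$ to $(\langle v_j,x\rangle)_{j<M}$ is a topological automorphism (it and its inverse are built from the continuous operations of $G$). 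By construction $\Psi(h_n)$ is eventually $(c_0,\dots,c_{M-k-1},u_0(n),\dots,u_{k-1}(n))$ with the first block constant; since that block stabilizes and the second accumulates at $(p_0,\dots,p_{k-1})$, the tuple $(c_0,\dots,c_{M-k-1},p_0,\dots,p_{k-1})$ is an accumulation point of $(\Psi(h_n):n\in A)$, and applying the homeomorphism $\Psi^{-1}$ yields an accumulation point of $(h_n)_{n\in A}$, a fortiori of the original sequence.

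I expect the main obstacle to be organizing the reduction cleanly rather than any single hard estimate: the crux is verifying purity of $D$ (where torsion-freeness of $G$ is essential, so that each eventually-constant combination really selects a single value), and keeping the bookkeeping straight so that the free part is genuinely a $k$-tuple with $k\le M$, hence a legitimate element of $\mathcal{F}(G,<M+1)$. The diagonalization and the $GL_M(\mathbb{Z})$ change of variables are then routine.
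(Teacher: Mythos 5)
Your proof is correct, and your reading of the (awkwardly phrased) hypothesis --- that every tuple $(f_i)_{i<k}\in\mathcal{F}(G,<M+1)$ has an accumulation point in $G^k$ --- is exactly how the paper itself invokes the lemma. Your route, however, is genuinely different from the paper's. The paper fixes a free ultrafilter $\mathcal{U}$ and works in the ultrapower: it forms the finitely generated torsion-free (hence free) group $K=\langle [g_i]_{\mathcal{U}}:\, i<M\rangle$, lets $K_0$ be the subgroup of classes of constant sequences, proves $K/K_0$ torsion-free --- by exactly the argument you use for purity of $D$, with torsion-freeness of $G$ doing the same work --- chooses a stacked basis $([t_j]_{\mathcal{U}})_{j<k''}$, expresses the $g_i$ as integer combinations of the $t_j$ on a $\mathcal{U}$-large set $C_0$, and only then thins inductively (the choice of the $n_q$) to a set $A$ on which all nonzero combinations of the non-constant $t_j$ are one-to-one off finite sets. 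You instead split the coefficient lattice $\mathbb{Z}^M$ itself along the pure subgroup $D$, after one up-front diagonalization over the countable set $\mathbb{Z}^M$ that simultaneously replaces the ultrafilter dichotomy and the inductive thinning; your $D$ corresponds to the paper's $K_0$ and your rank-$k$ complement to the basis $(t_j)_{j<k'}$. The reassembly step differs in a substantive way: the paper's coefficient matrix $(s_i(j))$ is in general not invertible, and since continuous images of accumulation points need not be accumulation points, the paper must upgrade the accumulation point of the free part to a $\mathcal{V}$-limit for a second ultrafilter $\mathcal{V}$ before pushing it through the linear map $(d_j)_{j<k''}\mapsto(\sum_{j<k''}s_i(j).d_j)_{i<M}$; your change of coordinates is unimodular, so $\Psi$ is a self-homeomorphism of $G^M$ and the accumulation point pulls back directly, with no ultrafilters anywhere in the argument. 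What each approach buys: yours is more elementary and self-contained; the paper's ultrapower formulation is the one that transfers smoothly to the torsion analogue (Lemma \ref{lemma.reduction.torsion.abelian.group}), where the tuple length $\lambda$ may be $\omega$ and the key fact is that an ultrapower of a $\mathbb{Z}_P$-vector space is again such a vector space, whereas your finite $GL_M(\mathbb{Z})$ bookkeeping would need reworking for infinitely many coordinates. One cosmetic point: ``the equation $dx=c$ has a unique solution'' should read ``at most one solution'' --- existence is irrelevant, and what you actually use is that two values with equal $d$-th multiples coincide by torsion-freeness.
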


\begin{proof} Fix arbitrarily $g_i:\, \omega \longrightarrow G$ for each $i<M$.
	If there exists $B\in [\omega]^\omega$ such that each $g_i|_B$ is constant for each $i<k$, we are done. So, we can assume w.l.o.g. that there exists $B\in \omega$ infinite such that $g_j|_B$ is one-to-one, for some $j<M$.
	
Fix a free ultrafilter ${\mathcal U}$  on $\omega$ such that $B\in {\mathcal U}$. The Abelian group $K$ generated by $\{ [g_i]_{\mathcal U}:\, i<M\}$
is torsion-free and finitely generated. Thus, it is isomorphic to a finite sum of copies of ${\mathbb Z}$. Let $K_0$ be the subgroup of $K$ by the classes of constant sequences. Since it is a subgroup of a free Abelian group, $K_0$ is a
free Abelian group. We claim that $K/K_0$ is also torsion-free. Indeed, if it was not torsion-free then there exits a positive integer $l $ and $h \in K \setminus K_0$  such that $l.h \in K_0$.  Let $g\in \langle \{g_i:\, i<M\}\rangle $ be such that $h=[g]_{\mathcal U}$. Then, there exists $B \in {\mathcal U}$ such that $( l.g(n):\, n \in B)$ is a constant sequence. Consequently, as a subset of a free Abelian group, the sequence $(g(n):\, n\in B)$ is constant. Therefore $h=[g]_{\mathcal U}\in K_0$, a contradiction. From the claim above and the fact that $K/K_0$ is finitely generated, it follows that $K/K_0$ is a free Abelian group. Let $\{t_i:\, i < k'\}$ with $k'\leq M$ be such that $\{[t_i]_{\mathcal U} +K_0:\, i<k'\}$ is a basis for $K/K_0$. Let $\{ t_i:\, k' \leq i<k'' \}$,  with $k'\leq k''\leq M$ be constant functions such that $\{[t_i]_{\mathcal U}:\, k'\leq i<k''\}$ is a basis for $K_0$. It is straightforward to see that $\{ [t_i]_{\mathcal U}:\, i<k''\}$ is a basis for $K$.

Let $C_0$ be an element of ${\mathcal U}$ such that for each $i<k$ there exists $s_i:\,k'' \longrightarrow {\mathbb Z}$ such that $g_i(n) =\sum_{j<k''}s_i(j).t_j(n)$ for each $n\in C_0$.

Enumerate all functions $r:\, k' \longrightarrow {\mathbb Z}$ that are not constantly $0$ as $\{ r_m:\,  m \in \omega\}$.
For each $m\in \omega$ and for each $C\in {\mathcal U}$ we have that $\{\sum_{j<k'} r_m(j).t_j(n):\, n \in C\}$ is infinite.

Choose inductively $n_q > \text{ max } \{n_0, \ldots , n_{q-1}\}$ such that $n_q$ is an element of

$\bigcap_{a,b\leq q , p <q} ( C_0 \setminus\{ n \in \omega :\, \sum_{j <k'} r_a(j).t_j (n)=\sum_{j <k'} r_b(j).t_j (n_p)\})$.

 This is possible, since

 $\{ n\in \omega :\, \sum_{j <k'} r_a(j).t_j (n)=\sum_{j <k'} r_b(j).t_j (n_p)\}$

 \noindent is not an element of ${\mathcal U}$ for each  $a,b \leq q$ and $p<q$.

The set  $A= \{n_q:\, q\in \omega \}$ is so that $ \{ \sum_{j<k'} r_q(j).t_j(n) :\, n \in A \setminus n_q \}$
is one-to-one for each $q \in \omega $. Indeed, if $u> v\geq q$ then $n_u \in C_0 \setminus \{n \in  \omega:\, \sum_{j<k'}r_q(j).t_j(n) =\sum_{j<k'}r_q(j).t_j(n_v)\}$. Hence,
$\sum_{j<k'}r_q(j).t_j(n_u) \neq\sum_{j<k'}r_q(j).t_j(n_v)$.

Therefore $(t_j|_A)_{ j<k'}$  is in ${\mathcal F}(G, k')$. By hypothesis, $( (t_j(n))_{j<k'}:\, n \in A )$ has an accumulation point  in $G^{k'}$. As $ t_j|_A$ is constant for $k' \leq j <k''$, it follows that there exists an ultrafilter ${\mathcal V}$ on $A$ and
$(d_j)_{j<k''}$ in $G^{k''}$ that is the ${\mathcal V}$-limit of $( (t_j(n))_{j<k''}:\, n \in A )$.

Since $A$ is a subset of $C_0$, the previouly defined $s_i:\,k'' \longrightarrow {\mathbb Z}$, for each $i<k$, are such that $g_i(n) =\sum_{j<k''}s_i(j).t_j(n)$ for each $n\in A$. Therefore $\{(g_{i}(n))_{i<k}:\, n \in A\}$ has $(\sum_{j<k''}s_i(j).d_j)_{i<k}$ as an accumulation point in $G^k$.
\end{proof}

\subsection{Accumulation points}

We will prove some auxiliary results to show that sequences associated to ${\mathcal F}(<k)$ have many accumulation points that are related to independent sets. This property is used to preserve accumulation points after refining the topology.

\begin{lemma} \label{nontorsion.tree.first.step} Let $H$ be a topological Abelian group without non-trivial convergent sequences such that
$H^M$ is countably compact for some positive integer $M$, $A _0$ and $A_1$ are infinite subsets of $\omega$ and  $\{y_{i,n}:\, i<M \mbox{ and } n \in A_0 \cup A_1\}$ is a subset of $H$. Suppose that, for each $j<2$, the function $s_j:\, M \longrightarrow  {\mathbb Z}$  and the finite set $F_j$ are such that the sequence $\{ \sum_{i <M} s_j(i).y_{i,n}:\, n \in A_j\setminus F_j\}$ is one-to-one.

If $\{(y_{i,n})_{i<M}:\, n \in A_j\}$ has an accumulation point in an open set $U_j$ of $H^M$ then, for each $j<2$, there exist an infinite subset $B_j$ of $A_j$ and a basic open set $V_j=\prod_{i<M}V_{j,i}$  subset of $U_j$  such that

 \begin{enumerate}
\item $\overline{V_j}\subseteq U_j$, for each $j<2$;
  \item $ \overline{ \{(y_{i,n})_{i<M}:\, n \in B_j\} }\subseteq V_j$, for each $j<2$;

  \item $\overline{\sum_{i\in \supp s_0}s_0(i).V_{0,i}} \cap \overline{ \sum_{i\in \supp s_1}s_1(i). V_{1,i}} = \emptyset$ and

\item $0\notin \overline{\sum_{i\in \supp s_0}s_0(i).V_{0,i}} \cup \overline{ \sum_{i\in \supp s_1}s_1(i). V_{1,i}}$.
 \end{enumerate}
\end{lemma}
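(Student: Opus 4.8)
The plan is to treat conditions (1) and (2) as routine consequences of choosing nested boxes around a suitable cluster point, and to locate the real content in conditions (3) and (4). Write $\varphi_j\colon H^M\to H$ for the continuous homomorphism $\varphi_j((h_i)_{i<M})=\sum_{i<M}s_j(i)h_i=\sum_{i\in\supp s_j}s_j(i)h_i$, so that $\varphi_j((y_{i,n})_{i<M})=\sum_{i<M}s_j(i)y_{i,n}$ is exactly the one-to-one sequence in the hypothesis, and $\sum_{i\in\supp s_j}s_j(i)V_{j,i}=\varphi_j(V_j)$ for any box $V_j=\prod_{i<M}V_{j,i}$. If $q_j\in U_j$ is a cluster point of $((y_{i,n})_{i<M})_{n\in A_j}$ with $\varphi_j(q_j)\neq 0$, and moreover $\varphi_0(q_0)\neq\varphi_1(q_1)$, then (3) and (4) follow by separating the three distinct points $\varphi_0(q_0),\varphi_1(q_1),0$ of $H$ by open sets with pairwise disjoint closures (possible since $H$ is a regular Hausdorff group), pulling these back through the continuous $\varphi_j$ to boxes $V_j\ni q_j$ with $\overline{V_j}\subseteq U_j$, and taking $B_j$ to be the set of $n\in A_j$ whose tuple lands in a still smaller box with closure inside $V_j$. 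So the whole lemma reduces to producing cluster points $q_0\in U_0$ and $q_1\in U_1$ of the two full sequences whose $\varphi_j$-images are nonzero and distinct.

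The tool for this is the following \emph{principle}: in a countably compact Hausdorff space without non-trivial convergent sequences, every one-to-one sequence has infinitely many cluster points. Note that $H^M$ is such a space: it is countably compact by hypothesis, and it has no non-trivial convergent sequences because a convergent sequence in $H^M$ converges coordinatewise, and in $H$ a convergent sequence is eventually constant, so a convergent sequence in $H^M$ is eventually constant, hence not one-to-one. To prove the principle, suppose a one-to-one sequence $(z_n)$ had only finitely many cluster points $p_1,\dots,p_r$, and choose open $U_i\ni p_i$ with pairwise disjoint closures. Countable compactness forces all but finitely many $z_n$ into $\bigcup_i U_i$ (otherwise the $z_n$ outside would have a cluster point in the closed complement, a new cluster point), so some $\{n:z_n\in U_i\}$ is infinite; that subsequence is one-to-one and its only possible cluster point is $p_i$, hence it converges to $p_i$ by the standard fact that in a countably compact Hausdorff space a sequence with a unique cluster point converges --- contradicting the absence of non-trivial convergent sequences.

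Now I would fix $j$, start from the given cluster point of $((y_{i,n})_{i<M})_{n\in A_j}$ in $U_j$, and choose a box $V$ containing it with $\overline{V}\subseteq U_j$; since $H^M$ is regular this is possible, and $\overline V$ is countably compact and inherits the two properties above. The set $A'_j$ of $n\in A_j$ whose tuple lies in $V$ is infinite, and on it the full tuple sequence is one-to-one (its image under $\varphi_j$ is one-to-one, after deleting $F_j$). Applying the principle inside $\overline V$, the one-to-one sequence $(\varphi_j((y_{i,n})_{i<M}))_{n\in A'_j}$ has infinitely many cluster points in $H$; discarding the at most one that equals $0$, choose a nonzero one, and observe that by countable compactness and continuity it equals $\varphi_j(q)$ for some cluster point $q\in\overline V\subseteq U_j$ of the full tuple sequence. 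This produces cluster points with nonzero $\varphi_j$-image inside $U_j$, and since the set of attainable nonzero values is infinite for each $j$, I can pick them so that $\varphi_0(q_0)\neq\varphi_1(q_1)$. Feeding $q_0,q_1$ into the separation argument of the first paragraph completes the proof.

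The main obstacle is exactly this middle step: forcing a cluster point with nonzero weighted-sum image to lie in the \emph{open} set $U_j$. The absence of non-trivial convergent sequences guarantees, via the counting principle, an abundance of cluster points, but a priori they could accumulate on the boundary of $U_j$ or all lie in the kernel of $\varphi_j$; the intermediate box with $\overline V\subseteq U_j$ is what confines them to $U_j$, and the one-to-oneness of $\varphi_j$ composed with the tuple sequence is what prevents $0$ from being the only available image.
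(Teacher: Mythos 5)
Your overall architecture matches the paper's: both proofs restrict to an infinite set of indices whose tuples fall inside a shrunken neighborhood with closure in $U_j$, both exploit the fact that a one-to-one sequence in a countably compact group without non-trivial convergent sequences has many accumulation points (the paper asserts $\mathfrak c$ many; your counting principle, whose proof is correct, gives infinitely many, which suffices), and both finish by separating two distinct nonzero values and $0$ by open sets with disjoint closures and pulling back through the continuity of the weighted-sum map to get the boxes $V_j$ and the sets $B_j$. The problem is the step you yourself flag as ``the main obstacle'' and then assert without proof: that a chosen cluster point $c$ of the image sequence $(\varphi_j(t_n))_n$ ``by countable compactness and continuity equals $\varphi_j(q)$ for some cluster point $q\in\overline V$ of the full tuple sequence.'' This exact-lifting claim does not follow from countable compactness. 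The two standard routes to it both fail here: (a) realizing $c$ as a $\mathcal U$-limit of $(\varphi_j(t_n))$ and lifting would require the tuple sequence to have a $\mathcal U$-limit along that \emph{prescribed} free ultrafilter, which countable compactness does not provide (it only guarantees a limit along \emph{some} ultrafilter); (b) extracting a subsequence with $\varphi_j(t_{n_k})\to c$ and taking any cluster point of $(t_{n_k})$ would work, but is impossible precisely because $H$ has no non-trivial convergent sequences and the image sequence is one-to-one. What is true, and provable, is only that every cluster point of the image sequence lies in the \emph{closure} of $\varphi_j(C_j)$, where $C_j$ is the set of cluster points of the tuple sequence; membership of your particular $c$ in $\varphi_j(C_j)$ itself is not justified.

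The gap is repairable, and the paper's proof shows how: it never lifts exactly. Having fixed accumulation points $c_0\neq c_1$, both nonzero, of the two weighted-sum sequences, it chooses neighborhoods $W_j\ni c_j$ with $\overline{W_0}\cap\overline{W_1}=\emptyset$ and $0\notin\overline{W_0}\cup\overline{W_1}$, then an open $W_j^*$ with $c_j\in W_j^*\subseteq\overline{W_j^*}\subseteq W_j$, restricts to the infinite set $D_j$ of indices whose weighted sum lands in $W_j^*$, and takes \emph{any} accumulation point $(b_{j,i})_{i<M}\in\overline{O_j}\subseteq U_j$ of the tuples over $D_j$; continuity then gives $\sum_{i\in\supp s_j}s_j(i).b_{j,i}\in\overline{W_j^*}\subseteq W_j$, which is all that conditions (3) and (4) require. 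So replace ``find $q$ with $\varphi_j(q)=c$'' by ``find $q$ with $\varphi_j(q)\in W_j$'' via this two-step neighborhood trick, and your separation paragraph goes through essentially unchanged. (Alternatively, one can salvage your route by showing $\varphi_j(C_j)$ is infinite: if it were finite it would be closed, forcing the infinite cluster set of the one-to-one image sequence into a finite set, a contradiction; then pick distinct nonzero values directly from $\varphi_0(C_0)$ and $\varphi_1(C_1)$ rather than from the image cluster sets.)
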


\begin{proof} 
	
	Fix $j<2$. By hypothesis, the sequence
	$((y_{i,n})_{i<M}:\, n \in A_j)$
	has an accumulation point $(a_i)_{i<M} \in U_j$. Let $O_j$ be a open neighborhood of $(a_i)_{i<M}$ such that $\overline{O_j}\subseteq U_j$.    
	
	The set $C_j=\{ n \in A_j \setminus F_j:\, (y_{i,n})_{i<M} \in O_j\}$ is infinite and the sequence $( \sum_{i\in \supp s_j }s_j(i).y_{i,n}:\, n \in C_j)$ is one-to-one. These sequences have  ${\mathfrak c}$ many accumulation points, for each $j<2$.  We can fix $c_0, c_1\in H$ such that $c_0\neq c_1\neq 0 \neq c_0$ and
	$c_j$ is an accumulation point of $\{ \sum_{i\in \supp s_j}s_j(i).y_{i,n}:\, n \in C_j\}$, for each $j<2$. Let $W_j$ be a neighborhood of $c_j$, for each $j<2$,
	such that  $\overline{W_0}\cap \overline{W_1}=\emptyset$ and $0\notin \overline{W_0}\cup \overline{W_1}$. Let $W_j^*$ be an open set such that $c_j \in W_j^* \subseteq \overline{W_j^*} \subseteq W_j$, for each $j<2$.
	
	 The set $D_j=\{ n \in C_j:\, \sum_{i\in \supp s_j}s_j(i).y_{i,n} \in W_j^*\}$ is infinite and $((y_{i,n})_{i<M}:\, n \in D_j) \subseteq O_j$ has an accumulation point $(b_{j,i})_{i<M} \in \overline{O_j}\subseteq U_j$. 
	
	It follows from the definition of $D_j$ that
	$\sum_{i\in \supp s_j}s_j(i).b_{j,i} \in \overline{W_j^*} \subseteq W_j$.
	Using the continuity of the addition, it follows that there exists a basic open set $V_j=\prod_{i<M}V_{j,i}$ such that
	$(b_{j,i})_{i<M} \in  V_j \subseteq \overline{V_j} \subseteq U_j$ and 
	$\sum_{i\in \supp s_j}s_j(i).V_{j,i}  \subseteq W_j$.  Therefore, $\overline{\sum_{i\in \supp s_j}s_j(i).V_{j,i}} \subseteq  \overline{ W_j}$. By the properties of $\overline{W_j}$ for $j<2$, it follows that conditions $3)$ and $4)$ are satisfied. The sets $V_j$ for $j<2$ were defined to satisfy condition $1)$.
	
	Let $V_j^*$ be an open set such that $(b_{j,i})_{i<M} \in  V_j^* \subseteq \overline{V_j^*}\subseteq V_j$, for each $j<2$.
	The set $B_j=\{n \in D_j:\, (y_{i,n})_{i<M} \in V_j^*\}$, for $j<2$, satisfies condition $2)$.
 \end{proof}

\begin{proposition} \label{nontorsion.tree.general.step}

 Let $H$ be a topological Abelian group without non-trivial convergent sequences, $M$ be a positive integer such that
$H^M$ is countably compact and  $((y_{i,n})_{i<M}:\,  n \in A) \in {\mathcal F}(H,M)$. 

There exists a family $\{a_{i,f}:\, i<M \text{ and } f \in \ ^\omega 2\}$ in $H$ such that

$A)$ $(a_{i,f}:\, i<M)$ is an independent set whose elements have infinite order, for each $f \in \ ^\omega 2$;

$B)$ $(a_{i,f})_{i<M}$ is an accumulation point of $( (y_{i,n})_{ i<M }:\, n \in A)$, for each $f \in \ ^\omega 2$ and

$C)$ $\sum_{i\in \supp s_0}s_0(i).a_{i,f_0}\neq \sum_{i\in \supp s_1}s_1(i).a_{i,f_1}$,  whenever $s_j:\, M \longrightarrow {\mathbb Z}$ of nonempty support for each $j<2$ and  $f_0,f_1\in \ ^\omega 2$  with $f_0\neq f_1$.
\end{proposition}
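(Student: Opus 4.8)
The plan is to reduce conditions $A)$ and $C)$ to a single statement — that the whole family $\{a_{i,f}:\, i<M,\ f\in\ ^\omega 2\}$ is independent with all elements of infinite order — and then to produce such a family by a tree (fusion) construction whose inductive step is exactly Lemma~\ref{nontorsion.tree.first.step}. Indeed, if the family is independent then for $f_0\neq f_1$ the sets $\{a_{i,f_0}:\,i<M\}$ and $\{a_{i,f_1}:\,i<M\}$ consist of $2M$ distinct members of an independent set, so any $\sum_{i\in\supp s_0}s_0(i).a_{i,f_0}-\sum_{i\in\supp s_1}s_1(i).a_{i,f_1}$ with $s_0$ of nonempty support is a nontrivial integer relation and hence nonzero; this gives $C)$, while independence within a single branch together with infinite order gives $A)$. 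Condition $B)$ will be automatic, since each $(a_{i,f})_{i<M}$ will be chosen as an accumulation point of a subsequence of $((y_{i,n})_{i<M}:\,n\in A)$.

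Concretely, I would build for every $\sigma\in\ ^{<\omega}2$ an infinite set $B_\sigma\subseteq A$ and a basic open box $V_\sigma=\prod_{i<M}V_{\sigma,i}$ of $H^M$ so that, writing $Y_\sigma=\{(y_{i,n})_{i<M}:\,n\in B_\sigma\}$, one has $B_{\sigma^\frown j}\subseteq B_\sigma$, $\overline{V_{\sigma^\frown j}}\subseteq V_\sigma$ and $\overline{Y_\sigma}\subseteq V_\sigma$ for $j<2$. Passing from a node to its two children is a direct application of Lemma~\ref{nontorsion.tree.first.step} with $A_0=A_1=B_\sigma$ and $U_0=U_1=V_\sigma$: it returns infinite $B_{\sigma^\frown 0},B_{\sigma^\frown 1}$ and boxes $V_{\sigma^\frown 0},V_{\sigma^\frown 1}\subseteq V_\sigma$ whose weighted sums have disjoint closures missing $0$, i.e.\ conditions $1)$--$4)$ of that lemma. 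Membership of $((y_{i,n})_{i<M}:\,n\in A)$ in ${\mathcal F}(H,M)$ guarantees that the one-to-one hypotheses hold on every infinite subset $B_\sigma$, so the step is always legitimate.

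The subtlety — and the step I expect to be the main obstacle — is that $A)$ and $C)$ demand the separation (conditions $3)$ and $4)$) for \emph{all} of the countably many pairs $(s_0,s_1)$ of functions $M\to{\mathbb Z}$ of nonempty support, whereas a single pair of open boxes cannot satisfy infinitely many closure-disjointness requirements at once. I would therefore not fix one box per node but run an $\omega$-stage fusion: at stage $t$ a bookkeeping function hands me a node $\sigma$ already present in the finite approximating tree together with one pair $(s_0,s_1)$, and I apply Lemma~\ref{nontorsion.tree.first.step} to shrink $V_{\sigma^\frown 0},V_{\sigma^\frown 1}$ — and, to keep the tree coherent, to re-shrink all their current descendants — so as to realize that pair's separation while preserving $\overline{V^{t+1}_\tau}\subseteq V^t_\tau$ and keeping each $B^t_\tau$ infinite. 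Because boxes only shrink and closures stay nested, a separation achieved at some stage is never destroyed; arranging the bookkeeping so that every (node, pair) is treated and the tree attains full height $\ ^{<\omega}2$ is routine but is where the care lies. Note that at every \emph{finite} stage the $V^t_\tau$ are genuine open boxes, so the well-known difficulty of trying to place an open set inside a nowhere-dense intersection never arises — that intersection is only met in the limit, when we define points.

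Finally, for each branch $f\in\ ^\omega 2$ I would take a pseudo-intersection $B_f^*\subseteq A$ of the countable decreasing family $\{B^t_{f|n}:\,t,n\in\omega\}$; by countable compactness of $H^M$ the sequence $((y_{i,n})_{i<M}:\,n\in B_f^*)$ has an accumulation point $(a_{i,f})_{i<M}$, which yields $B)$ at once since $B_f^*\subseteq A$. As $B_f^*$ is almost contained in each $B^t_{f|n}$ and $\overline{Y^t_{f|n}}\subseteq V^t_{f|n}$, this point lies in $\bigcap_{t,n}\overline{V^t_{f|n}}$, hence in every box ever assigned along the branch. Consequently, for $f_0\neq f_1$ splitting at $\sigma$ (say $f_0\supseteq\sigma^\frown 0$, $f_1\supseteq\sigma^\frown 1$) and any prescribed $(s_0,s_1)$, choosing the stage $t$ at which that pair was separated at $\sigma$ gives $\sum_{i\in\supp s_0}s_0(i).a_{i,f_0}\in\overline{\sum_{i\in\supp s_0}s_0(i).V^t_{\sigma^\frown 0,i}}$ and the analogous membership for $f_1$, and condition $3)$ forces these to differ, giving $C)$. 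Condition $4)$, applied to each single function appearing along $f$, forces every nonzero $\sum_{i\in\supp s}s(i).a_{i,f}$ to be nonzero, which is independence of $(a_{i,f})_{i<M}$ together with infinite order, i.e.\ $A)$. The hypothesis that $H$ has no non-trivial convergent sequences enters only inside Lemma~\ref{nontorsion.tree.first.step}, where it furnishes the two distinct accumulation points that make genuine branching possible.
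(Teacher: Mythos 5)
Your overall architecture (a binary tree of basic boxes and infinite sets, Lemma \ref{nontorsion.tree.first.step} as the splitting step, branch points obtained as accumulation points along pseudo-intersections) is the paper's, but the fusion step you propose for handling the countably many pairs $(s_0,s_1)$ has a genuine gap. When your bookkeeping revisits an interior node $\sigma$ whose subtree already exists, Lemma \ref{nontorsion.tree.first.step} produces the new boxes $V_{\sigma^\frown j}$ around \emph{freshly chosen} accumulation points: the weighted-sum image $\sum_{i\in\supp s_j}s_j(i).V_{\sigma^\frown j,i}$ must be trapped inside a small neighbourhood $W_j$ of a newly picked point $c_j$. The existing descendant boxes sit inside the \emph{old} $V_{\sigma^\frown j}$ and their weighted-sum images are in general nowhere near $W_j$, so they cannot be ``re-shrunk'' inside the new parent box; the subtree below $\sigma^\frown j$ must be rebuilt from scratch (with new, unrelated sets $B_\tau$, since the lemma only returns an infinite subset of its input set). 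Rebuilding destroys every separation previously achieved at descendant nodes, so your claim that ``a separation achieved at some stage is never destroyed'' fails. Worse, the injuries never stop: your verification of $C)$ separates $f_0$ from $f_1$ at their \emph{splitting node}, so every one of the countably many pairs $(s_0,s_1)$ must eventually be handled at \emph{every} node; each interior node is then revisited infinitely often, its subtree is rebuilt infinitely often, and no requirement at depth $\geq 2$ ever stabilizes. The monotonicity ($B^t_\tau$ decreasing in $t$, $\overline{V^{t+1}_\tau}\subseteq V^t_\tau$) on which your limit argument rests is exactly what the revisits break.

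The paper's bookkeeping removes the need to revisit, and this is the idea you are missing: stratify the requirements by level, demanding at level $m$ the separation only for coefficient functions $s_j:\,M\longrightarrow[-m,m]\cap{\mathbb Z}$ of nonempty support, but for \emph{all} pairs of distinct nodes $p_0\neq p_1$ of length $m$, not merely siblings. This is a finite list of requirements, disposed of by a finite inner iteration of Lemma \ref{nontorsion.tree.first.step} at the moment level $m$ is created --- only the current frontier is touched, before any deeper node exists, so genuine monotone shrinking is available and nothing is ever injured. Condition $C)$ is then verified not at the splitting node but at any level $k$ with $\ran s_0\cup\ran s_1\subseteq[-k,k]$ and $f_0|_k\neq f_1|_k$, and condition $d)$ (with the bound $|p|$ on the range of $s$) yields $A)$ in the limit just as in your last paragraph. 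If you replace your (node, pair)-by-(node, pair) fusion with this level-capped scheme, the remainder of your argument --- pseudo-intersections along branches, membership of $\sum_{i\in\supp s}s(i).a_{i,f}$ in $\overline{\sum_{i\in\supp s}s(i).V_{f|_k,i}}$, and the derivation of $A)$ and $C)$ from the two separation conditions --- goes through essentially verbatim.
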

\begin{proof}
We will construct a tree of basic open subsets of $H^M$, $\{ V_{p}:\, p \in \bigcup _{k<\omega} \ ^k 2\}$ and a tree of subsets of $A$, $\{A_p:\, p \in \bigcup _{k<\omega} \ ^k 2\}$ satisfying the following:

$a)$ $\overline{V_{p^\wedge j}}\subseteq V_{p}$,  for each  $ p \in\bigcup _{k<\omega} \ ^k 2$ and $j<2$;

 $b)$  $ \overline{ \{(y_{i,n})_{i<M}:\, n \in A_p\} }\subseteq V_p$,  for each $ p \in \bigcup _{k<\omega} \ ^k 2$;

\smallskip

In $c)$ and $d)$ below, we recall that $V_{p,i}$ is the $i$-th coordinate of the basic open set $V_{p}$, for each $i<M$:

\smallskip

 $c)$ $\overline{\sum_{i\in \supp s_0}s_0(i).V_{p_0,i}} \cap \overline{ \sum_{i\in \supp s_1}s_1(i). V_{p_1,i}} = \emptyset$, whenever
$|p_0|=|p_1|=k$, $s_j:\, M \longrightarrow [-k, k] \cap {\mathbb Z}$ has nonempty support and
$p_0\neq p_1$;

\smallskip

In $c)$ and $d)$ below, we recall that $V_{p,i}$ is the $i$-th coordinate of the basic open set $V_{p}$, for each $i<M$:

\smallskip

$d)$ $0\notin \overline{\sum_{i\in \supp s}s(i).V_{p,i}}$, whenever $ p \in\bigcup _{k<\omega} \ ^k 2$ and $s:\, M \longrightarrow [-|p|, |p|] \cap {\mathbb Z}$  has nonempty support and

$e)$ $A_{p^ \wedge j}\subseteq A_p$, for each $p \in \bigcup _{k<\omega} \ ^k 2$ and $j<2$.

\medskip

Let us assume for a moment that there are sets satisfying $a)$-$e)$. By condition $e)$,  we can find $A_f \subseteq A$ such that $A_f \setminus A_{f|_n}$ is finite, for each $f \in \ ^\omega 2$ and for each $n \in \omega$.

Let $(a_{i,f})_{i<M}$ be an accumulation point of $((y_{i,n})_{ i<M }:\, n \in A_f)$. It follows that condition $B)$ is satisfied.

Fix a non-constantly $0$ function $s:\, M \longrightarrow {\mathbb Z}$.  Choose $k \in {\mathbb N}$ such that $\text{ ran } s \subseteq [-k,k]$. Then $\sum_{i\in \supp s}s(i).a_{i,f} \in \overline{\sum_{i\in \supp s}s(i).V_{f|_{k},i}}$ by condition $b)$. By condition $d)$, this last set
does not contain $0$, therefore, $\sum_{i\in \supp s}s(i).a_{i,f}\neq 0$. As $s$ is arbitrary, it follows that $(a_{i,f}:\, {i<M})$ is linearly independent and condition $A)$ is satisfied.

 Fix functions of nonempty support $s_j:\, M \longrightarrow {\mathbb Z}$ and $f_j \in \ ^\omega 2$ for $j<2$ with $f_0 \neq f_1$. Let $k \in {\mathbb N}$ be such that $\text{ ran } s_0 \cup \text{ ran } s_1 \subseteq [-k,k]$ and $f_0 |_k \neq f_1|_k$.

By condition $c)$, it follows that

$\overline{\sum_{i\in \supp s_0}s_0(i).V_{f_0 |_k,i}} \cap \overline{ \sum_{i\in \supp s_1}s_1(i). V_{f_1|_k,i}} = \emptyset$.

It follows by condition $b)$ that, for each $j<2$,

\noindent
 $\sum_{i\in \supp s_j} s_j(i).a_{i,f}\in $ $\overline{\sum_{i\in \supp s_j}s_j(i).V_{f_j |_k,i}}$. 
 
 Therefore $\sum_{i\in \supp s_0}s_0(i).a_{i,f_0}\neq \sum_{i\in \supp s_1}s_1(i).a_{i,f_1}$  and condition $C)$ is satisfied. Hence all conditions $A)$-$C)$ are satisfied.

\smallskip

 We will now go back to the construction of the sets satisfying condition $a)-e)$. Set $V_\emptyset =G$ and $A_\emptyset = A$. Clearly all conditions are satisfied.

Suppose that $V_p$ and $A_p$ are constructed for each $p \in \bigcup_{k<m}  \ ^k 2$ and satisfy the inductive conditions $a)$-$e)$.

For each $p \in \ ^{m-1} 2$, set $A_{p^ \wedge 0}^0=A_{p^\wedge 1}^0=A_p$ and $V_{p^ \wedge 0 }^0=V_{p^\wedge 1}^0=V_p$. Enumerate as $\{ ((s^0_j , p^0_j), (s^1_j , p^1_j)):\, j<t\}$ all the pairs
$((s_0, p_0) , (s_1,p_1))$ such that $p_i \in \ ^m 2$ with $p_0\neq p_1$ and $s_ u :\, M \longrightarrow [-m,m] \cap {\mathbb Z}$ whose support is nonempty, for each $u<2$.

Suppose we have defined $A_p^j$ and $V_p^j$ for each $j<l \leq t$ satisfying:

$I)$ $\overline{V_p ^{j+1}}\subseteq V_p ^j$ and $A_p^{j+1} \subseteq A_p^j$ if $p \in \{p_j^0, p_j^1\}$ and $j+1<l$;

$II)$ $V_p ^{j+1}= V_p ^j$ and $A_{p}^{j+1}=A_{p}^j$ if $p \in \ ^m 2 \setminus \{p_j^0, p_j^1\}$ and $j+1<l$;

$III)$ $ \overline{ \{(y_{i,n})_{i<M}:\, n \in A_p^j\} }\subseteq V_p^j$, for each $p \in \ ^m 2$ and  $j<l$;

 $IV)$ $\overline{\sum_{i\in \supp s_j^0}s_j^0(i).V^{j+1}_{p^0_j,i}} \cap \overline{\sum_{i\in \supp s_j^1}s_j^1(i).V^{j+1}_{p^1_j,i}} = \emptyset$,  for each $j+1< l$ and

$V)$ $0\notin \overline{\sum_{i\in \supp s_j^0}s_j^0(i).V^{j+1}_{p_j^0,i}} \cup \overline{ \sum_{i\in \supp s_j^1}s_j^1(i). V^{j+1}_{p_{j}^1 ,i}}$.

By  $III)$, we can apply Lemma \ref{nontorsion.tree.first.step} on $A_r=A_{p_{l-1}^r}^{l-1}$ and $U_r=V_{p_{l-1}^r}^{l-1}$
to obtain $A_{p_{l-1}^r}^l \subseteq A_{p_{l-1}^r}^{l-1}$ and a basic open set $ V_{p_{l-1}^r}^l$ for each $r<2$ that satisfy:

$i)$ $\overline{V_{p_{l-1}^r}^l}\subseteq V_{p_{l-1}^r}^{l-1}$ for each $r<2$;

$ii)$ $ \overline{ \{(y_{i,n})_{i<M}:\, n \in A_{p_{l-1}^r}^l\} }\subseteq V_{p_{l-1}^r}^l$, for each $r<2$;

$iii)$ $\overline{\sum_{i\in \supp s^0_{l-1}}s^0_{l-1}(i).V_{p_{l-1}^0,i}^l} \cap \overline{ \sum_{i\in \supp s^1_{l-1}}s^1_{l-1}(i). V_{p_{l-1}^1,i}^l} = \emptyset$ and

$iv)$ $0\notin \overline{\sum_{i\supp s^0_{l-1}}s^0_{l-1}(i).V_{p_{l-1}^0,i}^l} \cup \overline{ \sum_{i\in \supp s^1_{l-1}}s^1_{l-1}(i). V_{p_{l-1}^1,i}^l}$.

Condition $I)$ holds by $i)$. Define $V_p ^{l}= V_p ^{l-1}$ and $A_{p}^{l}=A_{p}^{l-1}$ if $p \in \ ^m 2 \setminus \{p_{l-1}^0, p_{l-1}^1\}$ as in condition $II)$. Condition $III)$ for $p \in \{p_{l-1}^0, p_{l-1}^1\}$
is satisfied by $ii)$. Condition $III)$ for $p \in \ ^m 2 \setminus \{p_{l-1}^0, p_{l-1}^1\}$ is satisfied by condition $II)$ and condition $III)$ for $j=l-1$. Conditions $IV)$ and $V)$ for $j=l-1$ follows from conditions $iii)$ and $iv)$.

Now, define $V_p=V_p^t$ and $A_p=A_p^t$. We will check that conditions $a)$-$e)$ are satisfied.
 For each $p \in \ ^m 2$, there exists $j< t$ such that $p=p^0 _j$. By $I)$ and $II)$ it follows that

$\overline{V_{p}}\subseteq \overline{V_p^{j+1}}\subseteq V_p^j \subseteq V_{p|_{m-1}}$ and $a)$ holds.

It follows from condition $III)$ that
 $ \overline{ \{(y_{i,n})_{i<M}:\, n \in A_p^t\} }\subseteq V_p^t$, for each $ p \in \bigcup _{k<\omega} \ ^k 2$. Thus
condition $b)$ is satisfied.

 Fix a pair $(s_0,p_0), (s_1,p_1)$ with $p_0\neq p_1$ and $|p_0|=|p_1|=m$, $s_r:\, M \longrightarrow [-m, m] \cap {\mathbb Z}$  whose support is nonempty, for each $r<2$. Let $j<t$ be such that $(s_r,p_r)=(s^r_j,p^r_j)$, for each $r<2$. By condition $IV)$

 $\overline{\sum_{i\in \supp s^0_j}s^0_j(i).V_{p^0_{j+1},i}} \cap \overline{ \sum_{i\in \supp s^1_j}s^1_j(i). V_{p^1_{j+1},i}} = \emptyset$. By condition $I)$ and $II)$ it follows that $V_{p_r}=V_{p^r_{t}} \subseteq V_{p_{j+1}}^r  $, for each $r<2$. Thus $c)$ is satisfied.

It follows from condition $V)$ that

\noindent $0\notin \overline{\sum_{i\in \supp s_j^0 }s_j^0(i).V^{j+1}_{p_j^0,i}} \cup \overline{ \sum_{i\in \supp s_j^1 }s_j^1(i). V^{j+1}_{p_{j}^1 ,i}}$. By $I)$ and $II)$ it follows that condition $d)$ is satisfied.

Condition $I)$, $II)$ and $A_p^0=A_{p|_{m-1}}$ imply that $A_{p}\subseteq A_{p|_{m-1}}$, for each $p \in \bigcup _{m<\omega} \ ^m 2$ and $j<2$, therefore condition $e)$ is satisfied.
\end{proof}

\begin{proposition}\label{proposition.free.abelian.subgroup} Let $M$ be a positive integer and $\{ (g_{\alpha ,i})_{i<M_\alpha}:\, \omega \leq \alpha <{\mathfrak c} \}$ be an enumeration of ${\mathcal F}( <M+1)$ such that
	$\bigcup_{i<M_\alpha, n\in A_\alpha} \supp g_{\alpha, i}(n) \subseteq \alpha \times \omega$, for each $\alpha< {\mathfrak c}$.
	
	 Suppose that there exists an infinite non torsion topological group $H$ without non-trivial convergent sequences and $H^M$ is countably compact. Then, there exists a independent set $X=\{x_{\alpha ,i}:\, i<\omega \text{ and } \alpha <{\mathfrak c}\}\subseteq H$ such that $(x_{\alpha,i})_{ i<M_\alpha}$ is an accumulation point of the sequence $( (x_{g_{\alpha,i}(n)})_{i<M_\alpha}:\, n \in A_\alpha)$, for each infinite ordinal $\alpha < {\mathfrak c}$.
\end{proposition}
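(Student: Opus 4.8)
The plan is to build the independent set $X=\{x_{\alpha,i}:\,i<\omega\text{ and }\alpha<{\mathfrak c}\}$ by transfinite recursion on $\alpha<{\mathfrak c}$, at each stage defining the new block $\{x_{\alpha,i}:\,i<\omega\}$ so that both the accumulation-point requirement for the sequence indexed by $\alpha$ and the global independence of $X$ are maintained. The key observation that makes the recursion coherent is the support condition $\bigcup_{i<M_\alpha,\,n\in A_\alpha}\supp g_{\alpha,i}(n)\subseteq\alpha\times\omega$: when we come to stage $\alpha$, every element $x_{g_{\alpha,i}(n)}$ appearing in the sequence $\big((x_{g_{\alpha,i}(n)})_{i<M_\alpha}:\,n\in A_\alpha\big)$ is a ${\mathbb Z}$-combination of the $x_{\beta,j}$ with $\beta<\alpha$, hence already defined. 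Thus at stage $\alpha$ the sequence $(y_{i,n})_{i<M_\alpha}$, where $y_{i,n}:=x_{g_{\alpha,i}(n)}$, lives entirely in the subgroup generated by $\{x_{\beta,j}:\,\beta<\alpha\}$, and I will feed it into Proposition \ref{nontorsion.tree.general.step}.

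First I would verify that the hypotheses of Proposition \ref{nontorsion.tree.general.step} are met at stage $\alpha$, namely that $\big((y_{i,n})_{i<M_\alpha}:\,n\in A_\alpha\big)\in{\mathcal F}(H,M_\alpha)$. This should follow because $(g_{\alpha,i})_{i<M_\alpha}\in{\mathcal F}(<M+1)$ gives one-to-one-ness of $(\sum_{i<M_\alpha}s(i).g_{\alpha,i}(n):\,n\in A_\alpha\setminus F_s)$ in ${\mathbb Z}^{({\mathfrak c}\times\omega)}$, and sending the canonical basis to the (so-far independent) family $\{x_{\beta,j}:\,\beta<\alpha\}$ by a homomorphism preserves one-to-one-ness exactly as in the proof of Lemma \ref{lem.basis.to.basis}. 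Proposition \ref{nontorsion.tree.general.step} then hands me a family $\{a_{i,f}:\,i<M_\alpha\text{ and }f\in{}^\omega 2\}$ of accumulation points of $\big((y_{i,n})_{i<M_\alpha}:\,n\in A_\alpha\big)$, each $(a_{i,f})_{i<M_\alpha}$ independent with infinite-order entries, and satisfying the strong separation property $C)$: for distinct $f_0\neq f_1$ and nonzero $s_0,s_1$, the combinations $\sum s_0(i).a_{i,f_0}$ and $\sum s_1(i).a_{i,f_1}$ differ.

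The main obstacle — and the reason $2^\omega$ many candidate branches $f$ are produced rather than just one — is that I must choose a single branch $f=f_\alpha$ for which the new elements $x_{\alpha,i}:=a_{i,f_\alpha}$ (for $i<M_\alpha$, with the remaining $x_{\alpha,i}$, $i\geq M_\alpha$, chosen afterward to be independent over everything) are \emph{independent from and consistent with} the entire previously constructed set $\{x_{\beta,j}:\,\beta<\alpha\}$. Since $|\alpha|<{\mathfrak c}$, the subgroup $\langle x_{\beta,j}:\,\beta<\alpha\rangle$ has size $<{\mathfrak c}$, so the number of nontrivial linear relations that a bad branch could create is $<{\mathfrak c}=|{}^\omega 2|$; I would argue that each such potential relation $\sum_{i\in\supp s}s(i).a_{i,f}=w$ (for $w$ in the old subgroup and $s$ nonzero) can hold for at most one $f$, by property $C)$ applied to two distinct branches $f,f'$ realizing the same $w$. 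Counting, the set of branches ruined by \emph{some} relation has size $<{\mathfrak c}$, so a good $f_\alpha$ survives. Setting $x_{\alpha,i}=a_{i,f_\alpha}$ secures condition $B)$ (accumulation point for the $\alpha$-th sequence), while the counting argument secures that $X\cup\{x_{\alpha,i}:\,i<\omega\}$ remains independent; I then close the recursion and take $X=\bigcup_{\alpha<{\mathfrak c}}\{x_{\alpha,i}:\,i<\omega\}$.
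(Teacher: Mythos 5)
Your proposal is correct and follows essentially the same route as the paper's proof: transfinite recursion driven by the support condition, a transfer argument in the spirit of Lemma \ref{lem.basis.to.basis} (injective because $\{x_{\beta,j}:\,\beta<\alpha\}$ is independent) to place $(y_{i,n})_{i<M_\alpha}$ in ${\mathcal F}(K,M_\alpha)$, an application of Proposition \ref{nontorsion.tree.general.step}, and then the selection of a single good branch among the $2^\omega$ candidates via condition $C)$ --- your ``each relation ruins at most one branch'' counting is exactly the paper's pigeonhole argument stated in contrapositive form. The only detail you leave tacit is why the padding elements $x_{\alpha,i}$ for $i\geq M_\alpha$ (and the initial blocks for $\alpha<\omega$) exist: the paper justifies this by noting that a countably compact non-torsion Abelian group has free rank at least ${\mathfrak c}$, which supplies fresh independent elements of infinite order over the $<{\mathfrak c}$-sized subgroup built so far.
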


\begin{proof}

	A countably compact Abelian non torsion group is such that its free rank is at least ${\mathfrak c}$. In particular, we can find an independent set $\{ x_{m,i}:\, m, i <\omega \}$ of infinite order. 
	
	Suppose by induction that
$\{ x_{\beta , i }:\, \beta < \alpha  \text{ and } i<\omega \}$ is linearly independent with $\alpha <{\mathfrak c}$.
Let $K$ be the group generated by $X_\alpha=\{ x_{\beta , i }:\, \beta < \alpha  \text{ and } i<\omega \}$.

Then, the sequence $\{(x_{g_{\alpha ,i}}(n))_{i<M_\alpha}:\,  n<\omega \}$ is already defined. Set $y_{\alpha,i}=x_{g_{\alpha ,i}}$, for each $i<M_\alpha$. By the linear independence of $X_\alpha$, it follows from an argument similar to Lemma \ref{lem.basis.to.basis} that  $(y_{\alpha,i})_{i<M_\alpha} \in {\mathcal F}(K,M+1)$. Apply  Lemma \ref{nontorsion.tree.general.step} to obtain
 $\{a_{f,i}:\, f \in \ ^\omega 2 \text{ and } i<M_\alpha \}$.

We claim that there exists $h \in \ ^\omega 2$ such that $X_\alpha \cup \{a_{h,i}:\, i<M_\alpha\}$ is independent. Suppose by contradiction that this was not the case. Then, there exists $r_f:\, M_\alpha \longrightarrow {\mathbb Z}$ of nonempty support such that $ \sum_{i<M_\alpha} r_f(i).a_{f,i} $ is an element
of $K$, for each $f \in \ ^\omega 2$.
By cardinality arguments, it follows that there exists two distinct $f_0$ and $f_1$ in $\ ^\omega 2$ such that
 $\sum_{i<M_\alpha} r_{f_0}(i).a_{f_0,i} = \sum_{i<M_\alpha} r_{f_1}(i).a_{f_1,i} $, but this contradicts
condition $C)$ in Lemma  \ref{nontorsion.tree.general.step}.

Set $x_{\alpha, i}=a_{h,i}$, for each $i<M_\alpha$. Choose $x_{\alpha,i}$ for $M_\alpha \leq i <\omega$
so that $\{ x_{\beta , i }:\, \beta \leq \alpha  \text{ and } i<\omega \}$ is a basis for a free Abelian group.
\end{proof}

For the case $\omega$, it is not necessary to refine the topology.

\begin{theorem}  Suppose that there exists a non torsion topological Abelian group $H$ without non-trivial convergent sequences such that the group $H^m$ is countably compact, for each $m<\omega$. Then, there exists a topological free Abelian group $G$ of cardinality ${\mathfrak c}$ such that $G^m$ is countably compact for each $m<\omega$ and $G^\omega$ is not countably compact.
\end{theorem}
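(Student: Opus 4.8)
The plan is to realize $G$ as a free Abelian subgroup of the given group $H$ equipped with the \emph{subspace} topology (so, as the remark promises, no refinement of the topology is needed), choosing its basis so that every finite block of coordinates can be made to accumulate inside $G$, while one distinguished $\omega$-indexed sequence is prevented from accumulating. Concretely, I would first fix an enumeration $\{(g_{\alpha,i})_{i<M_\alpha}:\ \omega\le\alpha<\mathfrak{c}\}$ of the whole family ${\mathcal F}(<\omega)=\bigcup_{m<\omega}{\mathcal F}(m)$, arranged by the usual transfinite bookkeeping (using $|{\mathcal F}(<\omega)|=\mathfrak{c}$) so that $\bigcup_{i<M_\alpha,\,n\in A_\alpha}\supp g_{\alpha,i}(n)\subseteq\alpha\times\omega$ for each $\alpha$. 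I would then run the recursion of Proposition \ref{proposition.free.abelian.subgroup} essentially verbatim, the only change being that at stage $\alpha$ one invokes the countable compactness of $H^{M_\alpha}$ (legitimate since every finite power of $H$ is countably compact) and applies Proposition \ref{nontorsion.tree.general.step} with $M=M_\alpha$. This yields an independent set $X=\{x_{\alpha,i}:\ \alpha<\mathfrak{c},\ i<\omega\}$ of infinite-order elements such that $(x_{\alpha,i})_{i<M_\alpha}$ accumulates $((x_{g_{\alpha,i}(n)})_{i<M_\alpha}:\ n\in A_\alpha)$ for every infinite $\alpha$. Set $G=\langle X\rangle$ with the subspace topology; it is free Abelian of rank, hence cardinality, $\mathfrak{c}$.

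\textbf{Finite powers.} Fix a positive integer $m$. By Lemma \ref{lemma.reduction.free.abelian.group} it suffices to check that every tuple in ${\mathcal F}(G,<m+1)$ has the property that its initial sequences accumulate in the appropriate $G^{k}$. By Lemma \ref{lem.basis.to.basis} such a tuple has the form $(x_{g_i})_{i<m'}$ with $(g_i)_{i<m'}\in{\mathcal F}(<m+1)\subseteq{\mathcal F}(<\omega)$, so it occurs as $(g_{\alpha,i})_{i<M_\alpha}$ for some $\alpha$ with $M_\alpha=m'$; by construction $(x_{\alpha,i})_{i<m'}$ accumulates the associated sequence, and projecting to initial segments supplies the accumulation points required for every $k\le m'$. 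Hence $G^m$ is countably compact. One must take care that the enumeration genuinely lists every element of ${\mathcal F}(<\omega)$ \emph{together with every possible infinite domain} $A$, since the domain produced inside the proof of Lemma \ref{lemma.reduction.free.abelian.group} is not chosen in advance; this is exactly what the bookkeeping arranges.

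\textbf{Failure at $\omega$.} Here I would reserve, at the very start, a grid of pairwise distinct generators $\{z_{k,n}:\ k,n<\omega\}\subseteq X$ and consider the sequence $(w_n)_{n<\omega}$ in $G^\omega$ given by $w_n=(z_{0,n},z_{1,n},z_{2,n},\dots)$. No coordinate of $(w_n)$ is eventually constant, which blocks the cheap obstruction whereby making coordinates stabilize produces convergence to a point of $G^\omega$. To show $(w_n)$ has no accumulation point, suppose $w\in G^\omega$ were one. For each $K$ the restriction $w\restriction K$ must accumulate $((z_{k,n})_{k<K}:\ n<\omega)$, which is an element of ${\mathcal F}(G,K)$; thus $w$ is a thread through the inverse system of the (nonempty, by the previous step) sets of finite-level accumulation points. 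The aim is to contradict coherence: each $w(k)$ lies in the free Abelian group $G$ and so has \emph{finite} support in $X$, whereas the separation clauses $C)$ and $d)$ of Propositions \ref{nontorsion.tree.general.step} and \ref{proposition.free.abelian.subgroup} force admissible finite-level accumulation points from distinct branches to be independent and bounded away from one another and from $0$, so that threading them through all $K$ would require the support of some fixed coordinate $w(k)$ to meet infinitely many blocks of $X$, which is impossible.

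\textbf{Main obstacle.} The genuinely hard step is this last one: converting the heuristic ``finite support versus independent finite-level accumulation points'' into a proof that the inverse limit, inside $G^\omega$, of the finite-level accumulation points is empty. This is where the non-compactness of $G$ (equivalently, the absence of nontrivial convergent sequences in $H$) must be used quantitatively, and where one must verify that reserving the grid $\{z_{k,n}\}$ does not damage the accumulation points demanded for the finite powers. Everything else reduces to bookkeeping together with the already-established Lemmas \ref{lem.basis.to.basis} and \ref{lemma.reduction.free.abelian.group} and Propositions \ref{nontorsion.tree.first.step}, \ref{nontorsion.tree.general.step} and \ref{proposition.free.abelian.subgroup}.
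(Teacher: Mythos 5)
Your treatment of the finite powers is correct and coincides with the paper's: the same enumeration of ${\mathcal F}(<\omega)$ with the support condition, the recursion of Proposition \ref{proposition.free.abelian.subgroup} run with $M=M_\alpha$ at stage $\alpha$, and then Lemmas \ref{lem.basis.to.basis} and \ref{lemma.reduction.free.abelian.group} to conclude that $G^m$ is countably compact for all $m<\omega$. But the step you yourself flag as the ``main obstacle'' is a genuine gap, and the argument you sketch for it would not close. The separation clauses $C)$ and $d)$ of Proposition \ref{nontorsion.tree.general.step} only discriminate among the accumulation points $a_{i,f}$ produced along distinct branches of the tree built for a \emph{single} element of ${\mathcal F}$; they say nothing about arbitrary accumulation points of your grid sequence $(w_n)$. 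Note that $((z_{k,n})_{k<K}:n\in\omega)$ lies in ${\mathcal F}(G,K)$ for every $K$ (the $z_{k,n}$ are distinct basis elements), so the construction \emph{forces} every finite block to accumulate in $G^K$; an accumulation point of the full sequence in $G^\omega$ exists iff some single ultrafilter ${\mathcal U}$ gives ${\mathcal U}$-limits in $G$ simultaneously for all $k$, and nothing in the construction controls all ultrafilters at once. The finite-level accumulation-point sets are closed but not compact, so the inverse-limit emptiness you hope for is not automatic, and the ``finite support versus independent branches'' heuristic does not rule out a coherent thread. This is exactly the difficulty that, in the finite case (Theorem \ref{theorem.freeabelian.mcc.m+1not}), requires refining the topology via Proposition \ref{proposition.refining.free.abelian.subgroup} and the delicate $I_\alpha$/$J_\alpha$ subgroup selection to destroy the ${\mathcal U}$-limits of one distinguished sequence.

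The paper closes the $\omega$ step in one line, by a completely different and much softer device: Tomita \cite{To98} proved in ZFC that the countable power of \emph{any} topological free Abelian group is never countably compact. Since $G$ is free Abelian by construction, $G^\omega$ fails to be countably compact with no reserved grid, no distinguished sequence, and no refinement of the topology --- this is precisely what the remark ``for the case $\omega$, it is not necessary to refine the topology'' means: the algebraic structure alone kills the $\omega$-th power. So your proposal should replace the entire ``Failure at $\omega$'' section (and the grid reservation, which is unnecessary and, as you suspected, interacts awkwardly with the bookkeeping) by an appeal to this theorem, or else reprove it; as it stands, the blind attempt does not constitute a proof.
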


\begin{proof} Let $\{ g_{\alpha ,i}:\, \omega \leq \alpha <{\mathfrak c} \text{ and } i < M_\alpha \}$ be an enumeration of ${\mathcal F}(<\omega)$ such that
$\bigcup_{i<M_\alpha} \supp g_{\alpha, i} \subseteq \alpha \times \omega$, for each $\alpha \in [\omega , {\mathfrak c}[$.

Using the same argument in the proof of Proposition \ref{proposition.free.abelian.subgroup}, it follows that
 there exists an independent set $X=\{x_{\alpha ,i}:\, i<\omega \text{ and } \alpha <{\mathfrak c}\}$ such that $(x_{\alpha,i})_{ i<M_\alpha}$ is an accumulation point of the sequence $( (x_{g_{\alpha,i}(n)})_{i<M_\alpha}:\, n \in \omega )$, for each infinite ordinal $\alpha < {\mathfrak c}$.

Let $G$ be the group generated by $X$. By Lemma \ref{lem.basis.to.basis}, every element of ${\mathcal F}(G,<\omega)$ appears listed as $ (x_{g_{\alpha,i}})_{i<M_\alpha}$, for some $\alpha \in [\omega, {\mathfrak c}[$.

 By Lemma \ref{lemma.reduction.free.abelian.group}, it follows that every finite power of the group $G$ is countably compact. Tomita \cite{To98} showed that the countable power of a topological free Abelian group cannot be countably compact and this ends the proof.
\end{proof}

\subsection{Refining topologies}

The next result is used to refine the original topology. A lemma for groups of order $2$ appears in Tomita \cite{To05fm}. There are some differences as we are dealing with a group of infinite order. For the sake of completeness we give a detailed proof.

\begin{proposition}\label{proposition.refining.free.abelian.subgroup} Let $\{ (g_{\alpha ,i})_{i<M_\alpha}:\, \omega \leq \alpha <{\mathfrak c} \}$ be an enumeration of ${\mathcal F}(<M+1)$ such that
$\bigcup_{i<M_\alpha} \supp g_{\alpha, i} \subseteq \alpha \times \omega$ for each $\alpha< {\mathfrak c}$ and $X=\{x_{\alpha ,i}:\, i<\omega \text{ and } \alpha <{\mathfrak c}\}$ be a basis for a free Abelian group as in Proposition \ref{proposition.free.abelian.subgroup}. There exists $\{ z_{\alpha,i}:\, \alpha <{\mathfrak c} \text{ and } i\in \omega \} \subseteq \langle X \rangle \times {\mathbb T}^{{\mathfrak c}}$ such that

$i)$ $z_{\alpha,i}$ extends $ x_{\alpha, i}$, for each $\alpha <{\mathfrak c} \text{ and } i<\omega$;

$ii)$ $(z_{\alpha,i})_{ i<M_\alpha}$ is an accumulation point of the sequence $( (z_{g_{\alpha,i}(n)})_{i<M_\alpha}:\, n \in \omega )$, for each infinite ordinal $\alpha < {\mathfrak c}$;

$iii)$ For each function $D:\, \omega\times \omega \longrightarrow {\mathbb T}$, there exists a coordinate $\mu$ such that $ z_{n,i}(\mu)=D(n,i)$ for each $(n,i) \in \omega \times \omega$;

$iv)$ If ${\mathcal U}$  is a free ultrafilter,  $F_j$ is a finite nonempty subsets of $\omega$ for $j<2$, $r_j:\, F_j \longrightarrow {\mathbb Z}\setminus \{0\}$ for $j<2$ are distinct, then  ${\mathcal U}$-lim $(\sum_{n\in F_0}r_0(n).z_{n,i}:\, i \in \omega ) \neq {\mathcal U}$-lim $(\sum_{n\in F_1}r_1(n).z_{n,i}:\, i \in \omega )$ and

$v)$ If ${\mathcal U}_j$ for $j<2$ are distinct ultrafilters, $F_j$ for $j<2$ are nonempty finite subsets of $\omega$ and $r_j:\, F_j \longrightarrow {\mathbb Z}\setminus \{0\}$ for each $j<2$, then ${\mathcal U}_0$-lim $(\sum_{n\in F_0)}r_0(n).z_{n,i}:\, i \in \omega )\neq {\mathcal U}_1$-lim $(\sum_{n\in F_1}r_1(n).z_{n,i}:\, i \in \omega )$.
\end{proposition}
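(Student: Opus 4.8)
The plan is to realize the desired family as the graph of a single monomorphism $\Phi\colon \langle X\rangle \to \langle X\rangle\times {\mathbb T}^{\mathfrak c}$ whose first coordinate is the identity, so that condition $i)$ becomes automatic. Concretely, I index the factors of ${\mathbb T}^{\mathfrak c}$ by the set $\Lambda$ of \emph{all} functions $D\colon \omega\times\omega\to{\mathbb T}$, noting that $|\Lambda|={\mathfrak c}^{\aleph_0}={\mathfrak c}$, and for each $D\in\Lambda$ I build a homomorphism $\phi_D\colon \langle X\rangle\to{\mathbb T}$. Setting $z_{\alpha,i}=\bigl(x_{\alpha,i},(\phi_D(x_{\alpha,i}))_{D\in\Lambda}\bigr)$ then gives elements of $\langle X\rangle\times{\mathbb T}^{\mathfrak c}$ extending $x_{\alpha,i}$; since $X$ is a basis of the free Abelian group $\langle X\rangle$, each $\phi_D$ may be prescribed freely on $X$ and extended by linearity, and projection to the first coordinate shows that $\{z_{\alpha,i}\}$ is again a basis of a free Abelian group.

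Each $\phi_D$ is defined by recursion on $\alpha$ along the basis. At the base step $\alpha<\omega$ I simply put $\phi_D(x_{n,i})=D(n,i)$ for all $n,i<\omega$; this is exactly what condition $iii)$ asks of the coordinate indexed by $D$ (which is the generic coordinate $\mu$ of the statement). For $\alpha\ge\omega$ I first fix, once and for all and independently of $D$, a free ultrafilter ${\mathcal U}_\alpha$ witnessing that $(x_{\alpha,i})_{i<M_\alpha}$ is an accumulation point of $((x_{g_{\alpha,i}(n)})_{i<M_\alpha})$ in $H^{M_\alpha}$, as supplied by Proposition \ref{proposition.free.abelian.subgroup}. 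For $i<M_\alpha$ I am then \emph{forced} to set $\phi_D(x_{\alpha,i})={\mathcal U}_\alpha\text{-lim}_n\,\phi_D(x_{g_{\alpha,i}(n)})$; this is well defined because the hypothesis $\supp g_{\alpha,i}(n)\subseteq\alpha\times\omega$ guarantees that the values $\phi_D(x_{g_{\alpha,i}(n)})$ have already been computed, and because ${\mathbb T}$ is compact. For $i\ge M_\alpha$ the value is unconstrained, so I set it to $0$. With these choices $z_{\alpha,i}$ is the ${\mathcal U}_\alpha$-limit of $(z_{g_{\alpha,i}(n)})$ coordinate by coordinate --- in the first coordinate by the choice of ${\mathcal U}_\alpha$, in each coordinate $D$ by the forced definition --- so the full product limit exists and $(z_{\alpha,i})_{i<M_\alpha}$ accumulates at $((z_{g_{\alpha,i}(n)})_{i<M_\alpha})$, giving condition $ii)$.

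It remains to deduce $iv)$ and $v)$, and the key observation is that both follow from $iii)$ by choosing the separating coordinate cleverly. For $iv)$, since $(F_0,r_0)\neq(F_1,r_1)$ and all values are nonzero, the formal difference $\sum_{n\in F_0}r_0(n)e_n-\sum_{n\in F_1}r_1(n)e_n$ over ${\mathbb Z}$ is not zero, so it has a nonzero coefficient $c$ at some index $n^*$; picking $t_0\in{\mathbb T}$ with $c\,t_0\neq0$ and letting $D(n^*,i)=t_0$ for all $i$ and $D(n,i)=0$ for $n\neq n^*$, the two sequences $(\sum_{n\in F_0}r_0(n)z_{n,i}(D))_i$ and $(\sum_{n\in F_1}r_1(n)z_{n,i}(D))_i$ differ by the \emph{constant} $c\,t_0$, so their ${\mathcal U}$-limits differ in the coordinate $D$ for every free ultrafilter ${\mathcal U}$ at once. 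For $v)$, given distinct ${\mathcal U}_0,{\mathcal U}_1$ I choose $S\in{\mathcal U}_0\setminus{\mathcal U}_1$ (so $\omega\setminus S\in{\mathcal U}_1$); using that each $F_j$ is nonempty with nonzero coefficients I pick $D$ so that $\sum_{n\in F_0}r_0(n)D(n,i)=a$ whenever $i\in S$ and $\sum_{n\in F_1}r_1(n)D(n,i)=b$ whenever $i\notin S$, with $a\neq b$ fixed; for each $i$ only one of these requirements is imposed, so they never conflict and such a $D$ exists. Then in the coordinate $D$ the ${\mathcal U}_0$-limit of the first sequence is $a$ and the ${\mathcal U}_1$-limit of the second is $b\neq a$, so the two limits are distinct.

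The difficulty to keep in mind throughout is a cardinality gap: there are $2^{\mathfrak c}$ free ultrafilters but only ${\mathfrak c}$ coordinates, so no coordinate can be reserved per ultrafilter. This is precisely what the uniformity of the separating functions resolves. In $iv)$ the chosen $D$ produces a constant difference, hence a single coordinate handles all ultrafilters simultaneously; in $v)$ the relevant datum is the separating set $S$, and since $S$ ranges only over the ${\mathfrak c}$ subsets of $\omega$ while $(F_0,r_0,F_1,r_1)$ ranges over a countable set, the required functions $D$ --- and hence the coordinates guaranteed by $iii)$ --- number only ${\mathfrak c}$, exactly matching what is available. The only genuinely delicate point in the construction itself is the well-definedness of the forced values, which rests essentially on the support condition $\supp g_{\alpha,i}(n)\subseteq\alpha\times\omega$ together with compactness of ${\mathbb T}$.
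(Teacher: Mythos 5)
Your proposal is correct and takes essentially the same route as the paper's proof: both realize the $z_{\alpha,i}$ as graphs of homomorphisms $\phi_D\colon\langle X\rangle\to{\mathbb T}$ indexed by the ${\mathfrak c}$ functions $D\colon\omega\times\omega\to{\mathbb T}$, prescribe $\phi_D(x_{n,i})=D(n,i)$ on the countable block, propagate values up the basis by fixed ultrafilter limits (well defined via $\supp g_{\alpha,i}(n)\subseteq\alpha\times\omega$ and compactness of ${\mathbb T}$), and then derive $iv)$ and $v)$ from $iii)$ by choosing separating coordinates. The only cosmetic differences are in those last verifications, where the paper uses an independent family $\{a_n\}\subseteq{\mathbb T}$ of infinite-order elements and disjoint sets $B_j\in{\mathcal U}_j$, while you use a single non-torsion value at one index $n^*$ (giving a constant nonzero difference) and a single separating set $S\in{\mathcal U}_0\setminus{\mathcal U}_1$ --- equivalent devices.
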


\begin{proof} Let $A_\alpha$ be the domain of $f_{0,\alpha}$ and ${\mathcal V}_\alpha$ be a free ultrafilter on $A_\alpha$ such that
$(x_{\alpha,i})_{ i<M_\alpha}$ is the ${\mathcal V}_\alpha$-limit of the sequence $((x_{g_{\alpha,i}(n)})_{i<M_\alpha}:\, n \in A_\alpha)$, for each infinite ordinal $\alpha < {\mathfrak c}$.

Enumerate ${\mathbb T}^{\omega \times \omega}$ as $\{D_\mu:\, \mu <{\mathfrak c}\}$.

We will define a homomorphism $\phi_\mu:\, {\mathbb Z}^{({\mathfrak c} \times \omega)} \longrightarrow {\mathbb T}$ that will witness $iii)$, for  $D_\mu:\, \omega \times \omega \longrightarrow {\mathbb T}$.

First, set $\phi_\mu(x_{n,i})=D_\mu(n,i)$, for each $(n,i) \in \omega \times \omega$  and $\phi_\mu(x_{\alpha,i})=0$
for each $\alpha \in [\omega , {\mathfrak c}[$ and $i\geq M_\alpha$.

 Now, use the divisibility of ${\mathbb T}$, the independence of $X$  and the fact that $\supp g_{\alpha ,i}(n) \subseteq \alpha \times \omega$, for each $n\in A_\alpha$, to inductively extend it to a homomorphism such that $\phi_\mu(x_{\alpha,i})$ is the ${\mathcal V}_\alpha$-limit of the sequence $(\phi_\mu (x_{g_{\alpha,i}(k)}):\, k \in A_\alpha )$, for each infinite ordinal $\alpha < {\mathfrak c}$ and for each $i<M_\alpha$.

 Let $z_{\alpha,i}$ be the function $x_{\alpha,i} \ ^{\wedge} \{ \phi_\mu (x_{ \alpha ,i}):\, \mu <{\mathfrak c}\}$, for each $\alpha < {\mathfrak c}$ and $i<\omega$. Clearly condition $i)$ and $iii)$ are satisfied.

Condition $ii)$ is also satisfied, as $(z_{\alpha,i})_{ i<M_\alpha}$ is the ${\mathcal V}_\alpha$-limit of the sequence $( (z_{g_{\alpha,i}(k)})_{i<M_\alpha}:\, k\in A_\alpha)$, for each infinite ordinal $\alpha < {\mathfrak c}$. 

To check the two last conditions, we first fix an independent set $\{ a_n:\, n \in \omega \}\subseteq {\mathbb T}$  whose elements have infinite order.

We will check condition $iv)$ first. Fix an ultrafilter ${\mathcal U}$, finite nonempty subsets $F_j$ of $\omega$ for $j<2$ and  distinct functions $r_j:\, F_j \longrightarrow {\mathbb Z}\setminus \{0\}$ for $j<2$.

  Fix a function $D$  such that $D(n,i)=a_n$ for each $ n \in  F_0 \cup F_1$ and $i\in \omega$ and let $\mu<{\mathfrak c}$ be such that $D=D_\mu$. Thus, the ${\mathcal U}$-limit of $(\sum_{n\in F_j}r_j(n).z_{n,i}(\mu):\, i \in \omega )$ is $\sum_{n \in F_j}r_j(n).a_n$.
By the linearly independence of $\{a_n:\, n \in F_0\cup F_1\}$, it follows that 
$\sum_{n \in F_0}r_0(n).a_n\neq \sum_{n \in F_1}r_1(n).a_n$. Hence, condition $iv)$ is satisfied.

Finally, we check condition $v)$. Let $\{ {\mathcal U}_j:\, j<2\}$ be distinct ultrafilters, $F_j$ finite nonempty subsets of $\omega$ and $r_j:\, F_j \longrightarrow {\mathbb Z}\setminus \{0\}$ for each $j<2$. Fix $m_j \in \omega$ such that $m_j \in F_j$ and $B_0,B_1$ are disjoint subsets of $\omega$ such that $B_j \in {\mathcal U}_j$ for $j<2$. Set the function $E:\, \omega \times \omega \longrightarrow {\mathbb T}$ such that $E(n,i)=a_j$ if $(n,i)\in \{m_j\}\times B_j$ for $j<2$ and  $E(n,i)=0$ otherwise. Let $\nu<{\mathfrak c}$ be such that $E=D_\mu$.

It follows that 
 ${\mathcal U_0}$-lim $(\sum_{n\in F_0}r_0(n).z_{n,i}(\nu):\, i \in \omega )=r_0(m_0).a_0\neq r_1(m_1).a_1={\mathcal U_1}$-lim $(\sum_{n\in F_1}r_1(n).z_{n,i}(\nu):\, i \in \omega )$. Hence condition $v)$ is also satisfied.
\end{proof}

We will now take a subgroup of the refinement to obtain the desired topology for finite powers.

\begin{theorem} \label{theorem.freeabelian.mcc.m+1not} Suppose that there exists an infinite non torsion topological group $H$ without non-trivial convergent sequences and such that $H^M$ is countably compact for some positive integer $M$. Then there exists a free Abelian group $G$ such that $G^M$ is countably compact and $G^{M+1}$ is not countably compact.
\end{theorem}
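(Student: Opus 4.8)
The plan is to feed $H$ into the two construction results already proved, define $G$ as the generated subgroup, and then verify the two power conditions separately; the failure of countable compactness in the $(M+1)$-st power will be the real work. Concretely, fix an enumeration $\{(g_{\alpha,i})_{i<M_\alpha}:\omega\le\alpha<{\mathfrak c}\}$ of ${\mathcal F}(<M+1)$ with $\bigcup_{i<M_\alpha}\supp g_{\alpha,i}\subseteq\alpha\times\omega$. Apply Proposition \ref{proposition.free.abelian.subgroup} to $H$ to obtain the independent family $X=\{x_{\alpha,i}\}$ for which each listed tuple accumulates, and then Proposition \ref{proposition.refining.free.abelian.subgroup} to obtain $\{z_{\alpha,i}\}\subseteq\langle X\rangle\times{\mathbb T}^{{\mathfrak c}}$ satisfying $i)$--$v)$. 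Let $G=\langle z_{\alpha,i}:\alpha<{\mathfrak c},\ i<\omega\rangle$ with the subspace topology. Since the first coordinate sends $z_{\alpha,i}$ to $x_{\alpha,i}$ and $X$ is a basis of a free Abelian group, the $z_{\alpha,i}$ are independent and $G$ is free Abelian.

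For \emph{countable compactness of $G^M$}, I would argue as in the $\omega$ case. By Lemma \ref{lem.basis.to.basis}, applied with the generating set $\{z_{\alpha,i}\}$, every tuple in ${\mathcal F}(G,<M+1)$ has the form $(z_{g_{\alpha,i}})_{i<m}$ for some listed $\alpha$. Property $ii)$ says $(z_{\alpha,i})_{i<M_\alpha}$ accumulates the sequence $((z_{g_{\alpha,i}(n)})_{i<M_\alpha}:n\in\omega)$, and since projection onto the first $k\le M_\alpha$ coordinates is continuous, the truncated sequence accumulates at $(z_{\alpha,i})_{i<k}$ as well. Thus the hypothesis of Lemma \ref{lemma.reduction.free.abelian.group} is met for $G$, and $G^M$ is countably compact.

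For \emph{the failure of $G^{M+1}$}, I would test the sequence $w_i=(z_{0,i},z_{1,i},\dots,z_{M,i})\in G^{M+1}$, $i\in\omega$; it lies in ${\mathcal F}(G,M+1)$ because the $z_{\alpha,i}$ are independent, so for every nonzero $s:M+1\to{\mathbb Z}$ the sequence $(\sum_{j\le M}s(j)z_{j,i})_i$ is one-to-one, and this matches exactly the shape controlled by $iv)$ and $v)$. Suppose toward a contradiction that $(a_0,\dots,a_M)\in G^{M+1}$ is an accumulation point, and fix a free ultrafilter ${\mathcal U}$ with $a_j={\mathcal U}\text{-}\lim_i z_{j,i}$ for each $j\le M$. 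Each $a_j$ has finite support, so write $a_j=\sum_{(\beta,l)\in S}c_j(\beta,l)z_{\beta,l}$ for a common finite $S$. For any coordinate $\mu$ the map $\psi_\mu(z_{\alpha,i})=z_{\alpha,i}(\mu)$ is a continuous character of $G$, whence $\psi_\mu(a_j)={\mathcal U}\text{-}\lim_i z_{j,i}(\mu)$. I would then use property $iii)$ to choose a function $D:\omega\times\omega\to{\mathbb T}$ and a coordinate $\mu$ with $z_{n,i}(\mu)=D(n,i)$ so that the prescribed first-block values force the limits ${\mathcal U}\text{-}\lim_i z_{j,i}(\mu)$ to be genuinely independent elements of ${\mathbb T}$, while $iv)$ and $v)$ keep the limits attached to distinct finite ${\mathbb Z}$-combinations and to distinct ultrafilters distinct. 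Comparing the two expressions for $\psi_\mu(a_j)$ should then contradict the finiteness of $S$ together with $a_j\in G$.

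The main obstacle is precisely this last comparison: the hypothetical limits $a_j$ are arbitrary elements of $G$, so their supports may contain generators $z_{\beta,l}$ with $\beta\ge\omega$, whose $\mu$-th coordinates are not prescribed by $D$ but are instead determined recursively through the ${\mathcal V}_\alpha$-limits defining $\phi_\mu$. Reconciling these uncontrolled high-index contributions with the free first-block values is the delicate point, and I would handle it by exploiting $\phi_\mu(x_{\alpha,i})=0$ for $i\ge M_\alpha$ together with the support condition $\supp g_{\alpha,i}(n)\subseteq\alpha\times\omega$ to choose $D$ vanishing on the finite portion of the tree lying below the high-index generators occurring in $S$. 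This isolates the first-block contribution, where $iv)$ and $v)$ deliver the contradiction, so that no $(a_0,\dots,a_M)$ can exist and $G^{M+1}$ is not countably compact.
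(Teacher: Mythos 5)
Your first half (the enumeration, the appeal to Propositions \ref{proposition.free.abelian.subgroup} and \ref{proposition.refining.free.abelian.subgroup}, and the verification via Lemmas \ref{lem.basis.to.basis} and \ref{lemma.reduction.free.abelian.group} that the resulting group has countably compact $M$-th power) matches the paper, but the definition $G=\langle z_{\alpha,i}:\alpha<{\mathfrak c},\,i<\omega\rangle$ is where the proof breaks, and the obstacle you flag at the end is not a technical wrinkle but fatal. Taking the full group generated by $Z$ cannot work, because the test sequence may genuinely accumulate in it. Indeed, the tuple $(g_0,\dots,g_{M-1})$ with $g_l(i)=\chi_{(l,i)}$ belongs to ${\mathcal F}(M)\subseteq {\mathcal F}(<M+1)$, hence is enumerated as some $(g_{\alpha,i})_{i<M_\alpha}$ with $M_\alpha=M$, and property $ii)$ of Proposition \ref{proposition.refining.free.abelian.subgroup} then forces $(z_{\alpha,0},\dots,z_{\alpha,M-1})\in\langle Z\rangle^M$ to be an accumulation point of $((z_{0,i},\dots,z_{M-1,i}):\,i\in\omega)$, witnessed by some ultrafilter ${\mathcal W}$; nothing in $iii)$--$v)$ prevents the ${\mathcal W}$-limit of the remaining row $(z_{M,i}:\,i\in\omega)$ from also lying in $\langle Z\rangle$ --- conditions $iv)$ and $v)$ only make such limits pairwise \emph{distinct}, not external to the group. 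Your proposed repair collapses on this same example: the ``tree below'' the high-index generator $z_{\alpha,l}$ is exactly the row $\{l\}\times\omega$ (and in general such trees are countably infinite, not finite), so demanding that $D$ vanish below the high-index generators occurring in $S$ directly contradicts your simultaneous demand that $D$ take independent nonzero values on the rows $\{0,\dots,M\}\times\omega$. For such a candidate there is no contradiction to be had, because the partial limit really is in the group; your scheme would ``prove'' that a single row $(z_{0,i}:\,i\in\omega)$ has no accumulation point in $G$, which property $ii)$ makes false.

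What is missing is the entire second half of the paper's proof: one must not take all of $Z$, but select a subgroup by a transfinite induction of length ${\mathfrak c}$. The enumeration is arranged so that each member of ${\mathcal F}(<M+1)$ appears ${\mathfrak c}$ many times (a hypothesis you dropped, and which becomes essential); one builds a $\subseteq$-increasing chain $I_\alpha\subseteq{\mathfrak c}\times\omega$ together with disjoint ``forbidden'' sets $J_\alpha$, and sets $G=\langle z_{\beta,i}:(\beta,i)\in\bigcup_{\alpha<{\mathfrak c}}I_\alpha\rangle$. Countable compactness of $G^M$ is saved by re-witnessing each tuple whose supports fall inside the part already built at a fresh duplicate index $\rho_\alpha$ whose column $\{\rho_\alpha\}\times\omega$ avoids $\bigcup_{\mu<\alpha}(I_\mu\cup J_\mu)$, adding $\{\rho_\alpha\}\times M_{\rho_\alpha}$ to $I_\alpha$. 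Failure of $G^{M+1}$ is then engineered rather than derived: conditions $iv)$ and $v)$ bound by $|\alpha|+\omega$ the set ${\mathfrak U}_\alpha$ of ultrafilters carrying some nontrivial combination $\sum_{n<M+1}s(n).z_{n,i}$ into $\langle z_{\beta,i}:(\beta,i)\in I_\alpha\rangle$, and for each new ${\mathcal U}\in{\mathfrak U}_\alpha$ a rank argument --- if all $M+1$ limits $z_{{\mathcal U},n}$ lay in $\langle z_{\beta,i}:(\beta,i)\in I_\alpha\rangle$, their components $w_n$ in the rank-$\le M$ free group on the new generators would satisfy a nontrivial relation, pushing the corresponding combination's limit into the earlier stage and contradicting ${\mathcal U}\notin\bigcup_{\mu<\alpha}{\mathfrak U}_\mu$ --- yields a row $n_{\mathcal U}<M+1$ whose limit needs a generator outside $I_\alpha$; that generator is placed in $J_\alpha$ and thereby excluded from $G$ forever, so no single ultrafilter can give all $M+1$ rows limits in $G$. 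Your coordinate manipulations with $D$ (the paper's Cases A and B) do survive, but only at the initial stage $\alpha<\omega$, precisely because there the candidate limits have base-level support in $\omega\times\omega$, so the recursion through the ${\mathcal V}_\alpha$-limits never interferes with the prescribed values.
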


\begin{proof} Let $\{ g_{\alpha ,i}:\, \omega \leq \alpha <{\mathfrak c} \text{ and } i < M_\alpha \}$ be an enumeration of ${\mathcal F}(<M+1)$ such that
$\bigcup_{i<M_\alpha} \supp g_{\alpha, i} \subseteq \alpha \times \omega$ for each $\alpha \in [\omega , {\mathfrak c}[$ and each element of ${\mathcal F}(<M+1)$ appears ${\mathfrak c}$ many times in the enumeration.

By Propositions \ref{proposition.free.abelian.subgroup} and \ref{proposition.refining.free.abelian.subgroup} there exist a family $Z=\{ z_{\alpha,i}:\, \alpha <{\mathfrak c} \text{ and } i\in \omega \}$ such that

$a)$ $\{ z_{\alpha,i}:\, \alpha <{\mathfrak c} \text{ and } i<\omega\}$ is a basis for a free Abelian;

$b)$ $(z_{\alpha,i})_{ i<M_\alpha}$ is an accumulation point of the sequence $((z_{g_{\alpha,i}(n)})_{i<M_\alpha}:\, n \in \omega )$, for each infinite ordinal $\alpha < {\mathfrak c}$;

$c)$ For each function $D:\, \omega\times \omega \longrightarrow {\mathbb T}$, there exists a coordinate $\mu$ such that $ z_{n,i}(\mu)=D(n,i)$ for each $(n,i) \in \omega \times \omega$ and

$d)$ If ${\mathcal U}_0$ and ${\mathcal U}_1$ are distinct free ultrafilters. $F_0$ and $F_1$ are nonempty finite subsets of $\omega$ and $r_j:\, F_j \longrightarrow {\mathbb Z}\setminus \{0\}$ for each $j<2$, then  ${\mathcal U}_0$-lim $(\sum_{n\in F_0}r_0(n).z_{n,i}:\, i \in \omega )\neq {\mathcal U}_1$-lim $(\sum_{n\in F_1}r_1(n).z_{n,i}:\, i \in \omega)$.

\medskip

Let ${\mathfrak U}$ be the set of all ultrafilters ${\mathcal U}$ such
that the ${\mathcal U}$-limit of $(z_{n,i}:\, i \in \omega )$ is an element of the group $\langle Z \rangle$, for every $n<M+1$. Note that if $(a_n)_{n<M+1}$ is an accumulation point of the sequence $((z_{n,i})_{i<M+1}:\, i \in \omega )$ then there is ${\mathcal U}\in {\mathfrak U}$ such that $(a_n)_{n<M+1}$ is the ${\mathcal U}$-limit of this sequence. 

We will now obtain inductively a subset $I_\alpha \subseteq {\mathfrak c}\times \omega$ for each $\alpha \in [\omega , {\mathfrak c}[$ and the desired example will be $\langle \{z_{\beta ,i}:\, (\beta,i) \in \bigcup_{\alpha <{\mathfrak c} } I_\alpha \}\rangle$.

The induction will satisfy the following conditions:

$1)$ $|I_\alpha|\leq |\alpha|+\omega$ for each $ \alpha < {\mathfrak c}$ and $\{I_\alpha:\, \alpha <{\mathfrak c}\}$ is a $\subseteq$-increasing chain in ${\mathcal P}({\mathfrak c}\times \omega)$;

$2)$ ${\mathfrak U}_\alpha $ is the set of all ultrafilters ${\mathcal U} \in {\mathfrak U} $ for which
there exists $r:\, F \longrightarrow {\mathbb Z}\setminus \{0\}$ with $\emptyset \neq F\subseteq M+1$ such that the ${\mathcal U}$-limit of $\{ \sum_{m\in F} r(m).z_{m,i}:\, i\in \omega \}$ is an element of $\langle \{ z_{\beta , i}:\,  (\beta,i) \in I_\alpha \}\rangle$;

$3)$ $|{\mathfrak U}_\alpha|\leq |\alpha|+\omega$ for each $ \alpha <{\mathfrak c}$ and $\{ {\mathfrak U}_\alpha:\, \alpha <{\mathfrak c}\}$ is $\subseteq$-increasing chain;

$4)$ $J_\alpha$ is a subset of ${\mathfrak c}\times \omega$ and for each
${\mathcal U}\in {\mathfrak U}_{\alpha}$, there exists $m <M+1$ such that ${\mathcal U}$-lim $\{z_{m,i}:\, i \in \omega \} \notin \langle \{z_{\beta ,i }:\, (\beta,i) \in {\mathfrak c} \setminus J_\alpha \}\rangle$;

$5)$ $|J_\alpha| \leq |\alpha|$ for each $\omega \leq \alpha < {\mathfrak c}$ and $\{ J_\alpha :\, \alpha <{\mathfrak c}\}$ is a $\subseteq$-increasing chain in ${\mathcal P}({\mathfrak c}\times \omega)$;

$6)$ $\theta_\alpha \in [\omega, {\mathfrak c}[$ is the least ordinal $\theta \in {\mathfrak c}\setminus \{ \theta_\beta:\, \beta < \alpha \}$ such that $\bigcup_{n\in A_\theta , i <M_\theta} \text{ supp } g_{\theta , i}(n) \subseteq \bigcup_{\mu <\alpha }I_\mu$ for each $\alpha \in [\omega , {\mathfrak c}[$;

$7)$ $\rho_\alpha \in [\omega, {\mathfrak c}[$ is such that $(g_{\theta_\alpha,i})_{i<M_{\theta_\alpha}}=(g_{\rho_\alpha,i})_{i<M_{\rho_\alpha}}$, for each $\omega \leq \alpha <{\mathfrak c}$;

$8)$ $\{\rho_\alpha\} \times \omega  \subseteq {\mathfrak c} \setminus ( \bigcup_{\mu <\alpha} (J_\mu \cup I_\mu))$, for each $\omega \leq \alpha < {\mathfrak c}$;

$9)$ $\{\rho_\alpha\} \times M_{\rho_\alpha} \subseteq I_\alpha$, for each $\omega \leq \alpha <{\mathfrak c}$ and

$10)$ $J_\alpha \cap I_\alpha=\emptyset$, for each $\alpha <{\mathfrak c}$.

\medskip

We will show first that if  $I_\alpha$, $J_\alpha$, $\theta_\alpha$, $\rho_\alpha$, ${\mathfrak U}_\alpha$ are constructed for $\omega \leq \alpha <{\mathfrak c} $ satisfying the conditions $1)-10)$, then $G=\langle \{ z_{\beta, i}:\,  (\beta ,i ) \in \bigcup_{\alpha <{\mathfrak c}}I_\alpha\}\rangle$ is as required.

\smallskip

By condition $a)$, it follows that $G$ is a free Abelian group.

We claim that $G^M$ is countably compact. By Lemma \ref{lemma.reduction.free.abelian.group}, it suffices to show that every sequence associated to an element of  ${\mathcal F}(G,m)$ has an accumulation point in $G^m$, for each $m \leq M$. Fix an arbitrary $m\leq M$ and $(f_i)_{i<m} \in {\mathcal F}(G,m)$.
 Let $A$ be the domain of $f_i$ for some (all) $i<m$. By Lemma \ref{lem.basis.to.basis}, there exists $(h_i)_{i<m} \in {\mathcal F}(m)$  such that $f_i(n)=z_{h_i(n)}$, for each $i<m$ and  $n\in A$. Let $\mu<{\mathfrak c}$ be the smallest ordinal such that $\bigcup_{i<m , n\in A }h_i(n) \subseteq \bigcup_{\beta < \mu} I_\beta$ and let $\theta$ be an ordinal such that $A_\theta=A$, $M_\theta=m$ and
$(h_0, \ldots , h_{m-1})=(g_{\theta,0}, \ldots, g_{\theta,m-1})$.

We claim that $\theta = \theta_\nu$ for some infinite ordinal $\nu$. If that is not the case, $\theta_\beta < \theta$ for each $\beta \geq\mu$, but this contradicts $6)$ since the $\theta_\beta$'s are
pairwise distinct.
Thus, there exists $\nu<{\mathfrak c}$ such that $\theta=\theta_\nu$. By condition 7), $M_{\theta_\nu}=M_{\rho_\nu}$ and $g_{\theta_\nu,i}=g_{\rho_\nu,i}$
for each $i<m$ and $(\rho_\nu,i) \in I_\nu$. By $b)$, the sequence $ ((f_i(n))_{i<m}:\, n\in A) =   ((z_{g_{\rho_\nu,i}}(n)_{i<m}:\, n\in A)$ has $(z_{\rho_\nu,i})_{i<m} \in G^m$ as an accumulation point.

We will show now that $((z_{n,i})_{n<M+1}:\, i \in \omega)$ does not have an accumulation point in $G^{M+1}$.
Suppose by way of contradiction that it does have an accumulation point. Then, there exists an ultrafilter ${\mathcal U}$ such that each sequence $(z_{n,i}:\, i \in \omega)$ has a ${\mathcal U}$-limit in $G$, for each $n<M+1$.
 In particular, it follows that ${\mathcal U}\in {\mathfrak U}$. Let $\alpha$ be an ordinal such that the ${\mathcal U}$-limit of $(z_{0,i}:\, i \in \omega)$ is an element of the group generated by $\{ z_{\beta ,i}:\, (\beta ,i) \in I_\alpha \}$. It follows by condition $2)$ that ${\mathcal U}\in {\mathfrak U}_\alpha$. By $10)$, there exists $k<M+1$ such that the ${\mathcal U}$-limit of
$(z_{k,i}:\, i \in \omega )$ is not an element of $\langle \{z_{\beta ,i }:\, \beta \in {\mathfrak c} \setminus J_\alpha \}\rangle$.

By $1)$, $5)$ and $10)$, it follows that $G\subseteq \langle \{z_{\beta ,i }:\, (\beta ,i) \in {\mathfrak c} \setminus J_\gamma \}\rangle$. Therefore, the ${\mathcal U}$-limit of $(z_{k,i}:\, i \in \omega ) $ is not in $ G$, a contradiction.

\smallskip

We will now start the inductive construction satisfying conditions $1)-10)$.

\bigskip

{\bf Case 1} $[\alpha < \omega]$. Set $I_\alpha=\omega\times \omega$, for each $\alpha\in \omega$. Then condition $1)$ holds.

We claim that ${\mathfrak U}_\alpha$ defined as in condition $2)$ is the empty set. Let ${\mathcal U}\in {\mathfrak U}$ and $z \in \langle \{z_{n,i}:\, i \in \omega \text{ and } n \in \omega\}\rangle$. 

 {\em Case A}. The element $z$ is not the identity, let  $F\subseteq \omega\times \omega$ be a nonempty finite set and $r:\, F \longrightarrow {\mathbb Z}\setminus \{0\}$ be such that $z= \sum_{(n,i)\in F}r(n,i).z_{n,i}$.  Let $\{a_{n,i}:\, (n,i)\in F\}$ be an independent set whose elements have infinite order in ${\mathbb T}$. Let $D:\, \omega \times \omega \longrightarrow {\mathbb Z}$ be such that $D(n,i)=a_{n,i}$ for each $(n,i) \in F$ and $D(n,i)=0$ for each $(n,i) \in \omega \times \omega \setminus F$. Let $\mu<{\mathfrak c}$ be as in condition $c)$ for $D$.

Then the ${\mathcal U}$-limit of $( z_{n,i}(\mu):\, i \in \omega )$ is $0$ for each $n<M+1$ and $z(\mu)= \sum_{(n,i)\in F}r(n,i).a_{n,i}\neq 0$. Hence, the  ${\mathcal U}$-limit of the sequence $( \sum_{n<M+1} s(n). z_{n,i}(\mu):\, i \in \omega)$ is $0$ for every  $s:\, M+1 \longrightarrow {\mathbb Z}$. 

Therefore,
$z$ is not the ${\mathcal U}$-limit of any sequence $( \sum_{n\in \text{ dom } s} s(n). z_{n,i}:\, i \in \omega)$, for every $s:\, M+1 \longrightarrow {\mathbb Z}$ whose support is nonempty.

\smallskip

{\em Case B}.  The element $z$ is the identity. Fix 
$s:\, M+1 \longrightarrow {\mathbb Z}$ whose support is nonempty  and $a\in {\mathbb T}$ any non torsion element. Let $n_*<M+1$ be such that $s(n_*)\neq 0$. Let $E:\, \omega\times \omega \longrightarrow {\mathbb T}$ be such that $E(n_*,i)=a$ for each $i\in \omega$ and $E(n,i)=0$ for each $n \in \omega\setminus \{n_*\}$ and $i \in \omega$. Let $\nu<{\mathfrak c}$ be as in condition $c)$ for $E$. Then, the ${\mathcal U}$-lim$( \sum_{n \in M+1} s(n).z_{n,i}(\nu):\, i \in \omega )=$
${\mathcal U}$-limit of $( s(n_*).z_{n_*,i}(\nu):\, i \in \omega )=s(n_*).a \neq 0=z(\mu)$.

\medskip

 It follows from Cases A and B that ${\mathcal U} \notin {\mathfrak U}_\alpha$, for each $\alpha\in \omega$. Hence condition $2)$ and $3)$ are satisfied.

 Set $J_\alpha=\emptyset$, for each $\alpha\in \omega$. Condition $4)$ is satisfied as ${\mathfrak U}_\alpha=\emptyset$. As $J_\alpha=\emptyset$, it follows that condition $5)$ and $10)$ are satisfied. Conditions $6)$- $9)$ are trivially satisfied since $\alpha<\omega$.

\smallskip

Suppose that the induction holds for every $\mu < \alpha$.

\bigskip

{\bf Case 2}  $[\alpha \geq \omega]$.

Let $\theta_\alpha$ be the least ordinal for which $\bigcup_{ i< M_{\theta_\alpha} , \,  n \in \omega} \text{ supp } g_{\theta_\alpha,i}(n) \subseteq \bigcup_{\mu <\alpha}I_\mu$. Then condition $6)$ is satisfied.

The set $  \{ \beta <{\mathfrak c}:\, (\exists i \in \omega) ((\beta , i) \in \bigcup_{\mu<\alpha}I_\mu) \}$ has cardinality smaller than ${\mathfrak c}$ and the set $\{ \rho \in [\omega , {\mathfrak c}[:\, M_\rho = M_{\theta_\alpha} \text{ and }  g_{\rho,i}=g_{\theta_\alpha,i}\ \forall i<M_{\theta_\alpha}\}$ has cardinality ${\mathfrak c}$. Therefore, we can choose  $\rho_\alpha$ satisfying condition $7)$ and $8)$. Set $I_\alpha=\bigcup_{\mu <\alpha}I_\mu \cup (\{\rho_\alpha \} \times M_{\rho_\alpha})$. Then, condition $1)$ and $9)$ are satisfied.

Let ${\mathfrak U}_\alpha$ be as in condition $2)$. We claim that $|{\mathfrak U}_\alpha|\leq |\alpha|$. In fact,
for each ${\mathcal U}\in {\mathfrak U}_\alpha$, there exists a function of nonempty support $s:\, M+1 \longrightarrow {\mathbb Z}$ such that ${\mathcal U}$-limit of $( \sum_{k\in \supp s} s(k).z_{k,i}:\, i \in\omega )$ is $ y_{\mathcal U} \in\langle \{ z_{\beta , i}:\, (\beta,i) \in I_\alpha\}\rangle$.

 By condition $d)$, it follows that $\{ y_{\mathcal U}:\, {\mathcal U} \in {\mathfrak U}_\alpha\}$ are pairwise distinct. Therefore, $|{\mathfrak U}_\alpha | \leq |I_\alpha| +\omega \leq |\alpha| +\omega$ by condition $1)$. This shows that condition $3)$ holds.

We will define $J_\alpha $:

Fix ${\mathcal U} \in 
 {\mathfrak U}_\alpha\setminus \bigcup_{ \mu <\alpha} {\mathfrak U}_\mu$. Let $z_{{\mathcal U},n}$ be the $ {\mathcal U}$-limit of $( z_{n,i}:\, i \in \omega ) $, for each $ n <M+1$.
We claim that there is $n_{\mathcal U} <M+1$ such that
$z_{{\mathcal U},n_{\mathcal U}} \notin \langle \{ z_{\beta,i}:\, (\beta,i) \in I_\alpha \}\rangle $.
Suppose by contradiction that this is not the case. Then, there exists $w_n \in \langle \{ z_{\rho_\alpha, i}:\, i <M_{\rho_\alpha} \}\rangle$ such that $z_{{\mathcal U},n}-w_n \in \langle \{ z_{\beta ,i}:\, (\beta ,i ) \in \bigcup_{\mu<\alpha} I_\mu\}\rangle$, for each $n<M+1$. Since $\{w_n:\, n <M+1\}$ is a subset of a free Abelian group generated by at most $M$ elements,
it follows that there exists a function of nonempty support   $r:\, M+1 \longrightarrow {\mathbb Z}$ such that $\sum_{n\in \supp r} r(n).w_n=0$. Thus, $ {\mathcal U}$-limit of $(\sum_{n\in \supp r}r(n) z_{n,i}:\, i \in \omega ) =\sum_{n\in \supp r} r(n).z_{{\mathcal U},n} = \sum_{n\in \supp r} r(n).(z_{{\mathcal U},n} - w_n)\in \langle \{ z_{\gamma,i}:\, (\gamma,i) \in \bigcup_{\mu<\alpha} I_\mu\} \}\rangle $.
But this contradicts the fact that ${\mathcal U} \notin \bigcup_{\mu<\alpha}{\mathfrak U}_\mu$.

Let $F_{\mathcal U}$ be a finite subset of ${\mathfrak c}\times \omega$ and $r_{\mathcal U}:\, F_{\mathcal U} \longrightarrow {\mathbb Z}\setminus \{0\}$ be such that
$z_{{\mathcal U},n_{\mathcal U}}= z_{r_{\mathcal U}}$. Fix $(\beta_{\mathcal U}, i_{\mathcal U}) \in F_{\mathcal U} \setminus  I_\alpha$.

Define $J_\alpha$ as the set $\bigcup_{\mu<\alpha}J_\mu \cup  \{ (\beta_{\mathcal U},i_{\mathcal U}):\, {\mathcal U} \in {\mathfrak U}_\alpha \setminus \bigcup_{ \mu <\alpha}{\mathfrak U}_\mu \}$. By condition $3)$, it follows that $|J_\alpha|\leq |\alpha|$ and condition $5)$ holds. 

Clearly condition $4)$ holds for each ${\mathcal U} \in {\mathfrak U}_\alpha \setminus \bigcup_{ \mu <\alpha}{\mathfrak U}_\mu$.
Since $J_\alpha \supseteq J_\mu$ for $\mu<\alpha$, it follows from $4)$ at stage $\mu$ that condition $4)$ also holds for ${\mathfrak U}_\mu$ at stage $\alpha$. Thus, condition $4)$ holds for each element of ${\mathfrak U}_\alpha$.

Finally, to show that condition $10)$ holds, note that $J_\alpha \cap I_\alpha$ is the union of three empty sets $[(\bigcup_{\mu <\alpha} J_\mu)\cap (\bigcup_{\nu<\alpha}I_\mu)] \cup [(\bigcup_{\mu<\alpha} J_\mu)\cap (I_\alpha \setminus (\bigcup_{\nu<\alpha}I_\nu))] \cup [J_\alpha \setminus (\bigcup_{\mu<\alpha} J_\mu) \cap I_\alpha]$. The first intersection is empty by inductive hypothesis, the second intersection is empty by the choice of $\rho_\alpha$ and the third intersection is empty by the choice of $(\beta_{\mathcal U}, i_{\mathcal U})$ for each ${\mathcal U} \in {\mathfrak U}_\alpha \setminus \bigcup_{ \mu <\alpha}{\mathfrak U}_\mu$.
\end{proof}

\begin{example} Assume the existence of ${\mathfrak c}$ incomparable selective ultrafilters. For each $M <\omega$, there exists a topological group topology on the free Abelian group $G$ of cardinality ${\mathfrak c}$ such that $G^M$ is countably compact and
$G^{M+1}$ is not countably compact.
\end{example}

\begin{proof} Assuming the existence of ${\mathfrak c}$ incomparable selective ultrafilters, Tomita \cite{To?} showed that there exists a topological free Abelian group without non-trivial convergent sequences whose every finite power is countably compact. Applying Theorem \ref{theorem.freeabelian.mcc.m+1not}, we obtain the example.
\end{proof}

\section{Countably compact groups of finite order}

The proof for the finite case is very similar to the non torsion case but we have the infinite case that did not occur for the non torsion case. A key difference is that the ultrapower of a  free Abelian group is no longer a free Abelian group, whereas an ultrapower  of a vector space over the field ${\mathbb Z}_p$ is a vector space over the field ${\mathbb Z}_p$. This last fact is used to show that an arbitrary countable family of sequences can be associated to a family of sufficiently independent sequences for which we can refine the topology as in the torsion case.

We will sketch the proofs for the torsion case and point out some differences with the non torsion case.

 \begin{notation}
  Given a prime number $P$, an Abelian group $G$ of order $P$ and $\lambda\leq\omega$, let ${\mathcal F}_P(G,\lambda)$ be the set of all sequences $(f_i)_{i < \lambda}$ from some infinite subset $A\subseteq \omega$ into $G$  such that, for  a nonempty support function  $s \in ({\mathbb Z}_P)^{(\lambda)}$, there exists $F_s$ finite such that the set $\{ \sum_{i \in \text{ supp } s}s(i).f_i(n):\, n\in A \setminus F_s\}$ is one-to-one.

 Define ${\mathcal F}_P(G, <\kappa)=\bigcup_{\lambda<\kappa} {\mathcal F}_P(G,\lambda)$, for each $\kappa \leq \omega_1$.
 
 If $G={\mathbb Z}_P^{( {\mathfrak c} \times \omega)}$ then we will write  ${\mathcal F}_P(<\kappa)$ and $ {\mathcal F}_P(\kappa)$,  respectively.
 
\end{notation}

\begin{notation}
	Given a prime number $P$, $F\in {\mathbb Z}_P^{({\mathfrak c}\times \omega)}$, a group $H$ of order $P$ and a family $\{z_{\alpha ,i}:\, (\alpha, i) \in \supp F\} \subseteq H$, $z_F$  denotes the sum
	$\sum_{(\alpha, i) \in \supp F} F(\alpha,i).z_{\alpha,i}$. 
	
	Given $A$ a subset of $\omega$, $g:\, A \longrightarrow {\mathbb Z}_P^{({\mathfrak c}\times \omega)}$ and $\{z_{\alpha ,i}:\, (\alpha, i) \in \bigcup \{\supp g(n):\, n \in A\} \} \subseteq H$, we denote by $z_g$ the function with domain $ A$ and range $ H$ such that $z_g(n)=z_{g(n)}$, for each $n \in A$.
\end{notation}

\begin{lemma} \label{lemma.p.subgroup} Let $\kappa \leq \omega_1$ be a cardinal and $H$ be a topological group whose torsion subgroup is of uncountable cardinality and such that $H^\gamma$ is countably compact, for each cardinal $\gamma< \kappa$. Then
 there exists a prime number $P$ and a group topology without non-trivial convergent sequences on $({\mathbb Z}_P)^{({\mathfrak c})}$ such that its $\gamma$-th power is countably compact, for each $\gamma <\kappa$.
\end{lemma}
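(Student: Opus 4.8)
The plan is to reduce $H$ to a closed subgroup of prime exponent $P$ and then to imitate the free-abelian development of Section 2 with ${\mathbb Z}$ replaced by ${\mathbb Z}_P$. First I would locate the prime. Writing the torsion subgroup of $H$ as $\bigoplus_p t_p(H)$ and using that a countable union of countable sets is countable, some $p$-primary component $t_P(H)$ must be uncountable. The elementary fact that an abelian $p$-group whose socle $G[p]=\{x:px=0\}$ is countable must itself be countable now shows that $H[P]=\{x\in H:Px=0\}$ is uncountable: the map $x\mapsto p^{k-1}x$ embeds $G[p^{k}]/G[p^{k-1}]$ into $G[p]$, so $|G[p^{k}]|\le|G[p]|^{k}$, and since $G=\bigcup_{k}G[p^{k}]$ for a $p$-group we get $|G|\le\aleph_0$ whenever the socle is countable. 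This is exactly where the hypothesis of \emph{uncountable} torsion is used: a merely infinite torsion group such as ${\mathbb Z}_{P^{\infty}}$ can have finite socle, and then no group of exponent $P$ could be extracted.

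Next I would record the good properties of $K:=H[P]$. Since $x\mapsto Px$ is continuous and $H$ is Hausdorff, $K$ is the preimage of $\{0\}$ and hence a closed subgroup; consequently $K^{\gamma}$ is closed in the countably compact $H^{\gamma}$ and is therefore countably compact for every cardinal $\gamma<\kappa$, $K$ inherits from $H$ the absence of non-trivial convergent sequences, and $K$ is a group of exponent $P$, that is, a vector space over the field ${\mathbb Z}_P$. Thus $K$ is an uncountable countably compact ${\mathbb Z}_P$-vector space without non-trivial convergent sequences whose powers $K^{\gamma}$ ($\gamma<\kappa$) are countably compact.

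Finally I would build the required topology on $({\mathbb Z}_P)^{({\mathfrak c})}$ by running the torsion analogue of Proposition \ref{proposition.free.abelian.subgroup} inside $K$. By transfinite recursion of length ${\mathfrak c}$, fixing an enumeration of ${\mathcal F}_P(<\kappa)$ as in Proposition \ref{proposition.free.abelian.subgroup} and applying the ${\mathbb Z}_P$-versions of Lemma \ref{nontorsion.tree.first.step} and Proposition \ref{nontorsion.tree.general.step} (whose tree of basic open sets supplies ${\mathfrak c}$ independent accumulation points at each step, of which fewer than ${\mathfrak c}$ are excluded by the span of the previously chosen elements), I would select an independent family $X=\{x_{\alpha,i}:\alpha<{\mathfrak c},\ i<\omega\}\subseteq K$ so that every sequence listed by the enumeration has an accumulation point of the form $(x_{\alpha,i})_{i<M_\alpha}$ lying in $\langle X\rangle$. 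Then $G:=\langle X\rangle$ is algebraically $({\mathbb Z}_P)^{({\mathfrak c})}$, it carries the subgroup topology from $K$ and so still has no non-trivial convergent sequences, and by the ${\mathbb Z}_P$-analogue of Lemma \ref{lemma.reduction.free.abelian.group} each power $G^{\gamma}$ with $\gamma<\kappa$ is countably compact, since the relevant ${\mathcal F}_P$-sequences accumulate inside $G$ by construction.

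The main obstacle is the infinite power $\gamma=\omega$, which occurs precisely when $\kappa=\omega_1$. For finite powers the reduction lemma copies Section 2 essentially verbatim, but the argument cannot be purely formal, because Tomita \cite{To98} shows that a free abelian group can never have a countably compact $\omega$-th power. What rescues the torsion case — and what I would isolate as the key auxiliary lemma — is that an ultrapower of a ${\mathbb Z}_P$-vector space is again a ${\mathbb Z}_P$-vector space: this lets one replace an arbitrary countable family of $G$-valued sequences by a sufficiently independent family belonging to ${\mathcal F}_P(G,\omega)$, since the quotient of the ultrapower by the subspace of constants remains a vector space and so admits a basis, whereas the corresponding quotient of an ultrapower of a free abelian group fails to be free. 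With this substitution the accumulation points produced in $K$ suffice to witness countable compactness of $G^{\omega}$, completing the construction.
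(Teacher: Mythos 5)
Your proposal is correct, and its skeleton (find a prime $P$ with $\{g\in H:\, P.g=0\}$ uncountable, note this is a closed subgroup, hence has countably compact small powers, inherits the absence of non-trivial convergent sequences, and is a ${\mathbb Z}_P$-vector space) is the paper's; but the two halves diverge from the paper's proof in interesting ways. For locating $P$, the paper picks $m$ with $K_0=\{g \in H:\, m.g=0\}$ uncountable and invokes the structure theorem for bounded torsion groups (direct sums of cyclic $p^l$-groups) to find $P$; your primary decomposition plus the socle bound $|G[p^k]|\leq |G[p]|^k$ is more elementary, avoids Pr\"ufer's theorem entirely, and your ${\mathbb Z}_{P^\infty}$ remark correctly isolates why uncountability of the torsion part is the right hypothesis. (Note that both you and the paper's proof silently use that $H$ has no non-trivial convergent sequences, a hypothesis the printed statement of the lemma omits but which is present in every application.) For the second half the paper is much shorter than you are: it runs a closing-off argument of length $\omega_1$ (legitimate since every $\gamma<\kappa\leq\omega_1$ is countable, and one-to-one sequences have ${\mathfrak c}$ accumulation points, so the closure can be kept of cardinality ${\mathfrak c}$) to extract a subgroup $K$ of cardinality ${\mathfrak c}$ with $K^\gamma$ countably compact, and then simply transfers the subspace topology through the abstract isomorphism $K\cong ({\mathbb Z}_P)^{({\mathfrak c})}$ — no ${\mathcal F}_P$ machinery at all. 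Your third paragraph instead re-runs the torsion analogue of Proposition \ref{proposition.free.abelian.subgroup} (bookkeeping enumeration of ${\mathcal F}_P(<\kappa)$, the tree Proposition \ref{torsion.tree.general.step}, pigeonholing the ${\mathfrak c}$ branches against the fewer than ${\mathfrak c}$ spoiled values, which indeed still works for $\lambda=\omega$ because the coefficient functions have finite support) together with Lemma \ref{lemma.reduction.torsion.abelian.group}. That route is correct, and it is not wasted effort — it is exactly the step the paper leaves implicit, since Proposition \ref{proposition.refining.P.subgroup} takes such a basis $X$ as a hypothesis without ever constructing it — but it proves strictly more than Lemma \ref{lemma.p.subgroup} asserts, whereas the paper's closing-off buys the lemma directly. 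Your final paragraph, on ultrapowers of ${\mathbb Z}_P$-vector spaces remaining vector spaces (versus the failure for free abelian groups behind \cite{To98}), matches the paper's own remark at the head of Section 3; it is the key to Lemma \ref{lemma.reduction.torsion.abelian.group} rather than to this lemma, so placing it here is harmless but organizationally redundant.
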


\begin{proof} By hypothesis,  the torsion subgroup of $H$ is uncountable. Hence, there exists $m$ such that $m.g=0$ for uncountably many $g\in H$. Let $K_0=\{g \in H:\, m.g=0\}$. The group $K_0$ is a bounded torsion group, therefore, a direct sum of cyclic groups of some order $p^l$, $p$ prime, and $p^l$ divisor of $m$. Then, there exists a prime $P$ that divides $m$ and uncountably many  summands that are copies of ${\mathbb Z}_{P^l}$ for some $l$ such that $P^l$ divides $m$. It  follows that $K_1=\{ g \in H:\, P.g=0\}$ is uncontable. The group $K_1$ is a closed subgroup of $H$, thus the $\gamma$-th power of $K_1$ is countably compact, for each cardinal $\gamma <\kappa$.
	 Using closing off arguments and the fact that  $\gamma < \kappa \leq\omega_1$, we can find a subgroup $K$ of $K_1$  of cardinality ${\mathfrak c}$ such that $K^\gamma$ is countably compact for each $\gamma<\kappa$.
	
	The group $H$ does not have non-trivial convergent sequences, therefore, its subgroup  $K$ also inherits this property. Since $K$ is a group of order prime $P$, it is isomorphic to $({\mathbb Z}_P)^{({\mathfrak c})}$. Use the isomorphism to give the desired group topology on  $({\mathbb Z}_P)^{({\mathfrak c})}$.
\end{proof}

\begin{lemma} \label{lem.basis.to.basis.P}
	Let $X=\{ x_{\alpha, i}:\, \alpha < {\mathfrak c} \text{ and } i<\omega\}$ be a set of generators for an Abelian group $G$ of prime order $P$.
	
	Then  ${\mathcal F}_P(G, <\kappa)\subseteq\{ (x_{g_{i}})_{i<\lambda}:\, (g_i)_{i<\lambda} \in {\mathcal F}_P( <\kappa) \}$. 
\end{lemma}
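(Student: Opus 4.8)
The plan is to mimic the proof of Lemma \ref{lem.basis.to.basis}, which establishes exactly the analogous statement in the torsion-free setting, making only the cosmetic changes forced by passing from $\mathbb{Z}$-coefficients to $\mathbb{Z}_P$-coefficients. First I would fix an arbitrary sequence $(y_i)_{i<\lambda} \in {\mathcal F}_P(G,<\kappa)$, with each $y_i$ defined on a common infinite domain $A \subseteq \omega$. Since $X = \{x_{\alpha,i}\}$ generates $G$, each value $y_i(n)$ can be written as a $\mathbb{Z}_P$-linear combination of finitely many generators; this produces functions $g_i : A \longrightarrow \mathbb{Z}_P^{({\mathfrak c}\times\omega)}$ with $y_i = x_{g_i}$, for each $i<\lambda$. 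The whole content of the lemma is then to verify that this tuple $(g_i)_{i<\lambda}$ actually lies in ${\mathcal F}_P(<\kappa)$, i.e.\ that the required one-to-one condition on nonzero $\mathbb{Z}_P$-combinations survives the pullback to the canonical basis.

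The verification step proceeds exactly as in Lemma \ref{lem.basis.to.basis}. I would let $\{\chi_{\alpha,i}\}$ be the canonical basis of $\mathbb{Z}_P^{({\mathfrak c}\times\omega)}$ and let $\Phi$ be the homomorphism sending $\chi_{\beta,j} \mapsto x_{\beta,j}$; this exists and is well-defined precisely because $\mathbb{Z}_P^{({\mathfrak c}\times\omega)}$ is the free $\mathbb{Z}_P$-module (equivalently, $\mathbb{Z}_P$-vector space) on that basis. Then for a fixed nonzero $s \in (\mathbb{Z}_P)^{(\lambda)}$, the hypothesis $(y_i)_{i<\lambda}\in{\mathcal F}_P(G,\lambda)$ gives a finite $F_s$ with $(\sum_{i\in\supp s} s(i).y_i(n):\, n\in A\setminus F_s)$ one-to-one. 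The same chain of equalities as before shows $\sum_{i\in\supp s} s(i).y_i(n) = \Phi(\sum_{i\in\supp s} s(i).g_i(n))$, and since $\Phi$ is a function, distinct inputs are forced on the argument whenever the outputs are distinct; hence $(\sum_{i\in\supp s} s(i).g_i(n):\, n\in A\setminus F_s)$ is one-to-one as well. As $s$ was arbitrary, $(g_i)_{i<\lambda}\in{\mathcal F}_P(<\kappa)$, which yields the desired containment.

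I do not expect any genuine obstacle here: the argument is a direct transcription of Lemma \ref{lem.basis.to.basis}, and the one conceptual point worth checking is simply that working over the field $\mathbb{Z}_P$ rather than over $\mathbb{Z}$ causes no difficulty — if anything it is cleaner, since $\mathbb{Z}_P^{({\mathfrak c}\times\omega)}$ is an honest vector space and $\Phi$ is a genuine linear map. The only mild bookkeeping difference is that supports are now taken with respect to $s \in (\mathbb{Z}_P)^{(\lambda)}$ and the index $\lambda$ may range up to $\omega$ (with $\kappa$ up to $\omega_1$), so I would phrase the finitely-supported sums carefully; but the finiteness of $\supp s$ and of each $\supp g_i(n)$ makes every expression a finite sum exactly as in the torsion-free case. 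Thus the proof is complete once these substitutions are in place.
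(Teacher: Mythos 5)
Your proposal is correct and coincides with the paper's own treatment: the paper proves Lemma \ref{lem.basis.to.basis.P} simply by remarking that ``the proof is as in Lemma \ref{lem.basis.to.basis},'' which is precisely the transcription you carry out, with $\mathbb{Z}$ replaced by $\mathbb{Z}_P$, the homomorphism $\Phi$ from the free module $\mathbb{Z}_P^{({\mathfrak c}\times\omega)}$ onto $G$, and the one-to-one condition pulled back along $\Phi$. Your additional observations (that $\Phi$ is now a linear map of $\mathbb{Z}_P$-vector spaces, and that $s$ ranges over finitely supported elements of $(\mathbb{Z}_P)^{(\lambda)}$ with $\lambda$ possibly infinite) are exactly the bookkeeping the paper leaves implicit.
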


\begin{proof} The proof is as in Lemma \ref{lem.basis.to.basis}.

	\end{proof}

\begin{lemma} \label{lemma.reduction.torsion.abelian.group} Let $\kappa\leq \omega_1$ and  $G$ be a topological Abelian group of prime order $P$ such that $\{ (f_i(n))_{i<\lambda}:\, n\in A\}$ has an accumulation point in $G^\lambda$ for each
 $(f_i)_{i<\lambda} \in {\mathcal F}_P(G,<\kappa) $ with $\text{ dom } f_i=A$ for each $i<\lambda$.  Then $G^\gamma$ is countably compact, for each $\gamma <\kappa$.
\end{lemma}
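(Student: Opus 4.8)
**The plan is to reduce the problem to showing that an arbitrary family of sequences can be handled by members of $\mathcal{F}_P(G,<\kappa)$, mirroring the structure of Lemma \ref{lemma.reduction.free.abelian.group} but exploiting the vector space structure over $\mathbb{Z}_P$.**

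First I would fix an arbitrary cardinal $\gamma<\kappa$ and an arbitrary family $(g_i)_{i<\gamma}$ of functions from $\omega$ into $G$; the goal is to produce an accumulation point of $((g_i(n))_{i<\gamma}:\, n\in\omega)$ in $G^\gamma$. Since $G$ has prime order $P$, it is a vector space over the field $\mathbb{Z}_P$. I would fix a free ultrafilter $\mathcal{U}$ on $\omega$ and pass to the ultrapower $G^\omega/\mathcal{U}$, which — and this is precisely the key difference the author's preamble flags — is again a $\mathbb{Z}_P$-vector space. This is what makes the torsion case cleaner than the non-torsion case: in the free Abelian setting the ultrapower need not be free, forcing the delicate argument with $K/K_0$ torsion-freeness in Lemma \ref{lemma.reduction.free.abelian.group}, whereas here every subspace splits and one may choose a basis freely.

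Next I would consider the subspace $V$ of $G^\omega/\mathcal{U}$ spanned by $\{[g_i]_{\mathcal{U}}:\, i<\gamma\}$ together with the constant sequences. Inside $V$, I would separate the classes of constant sequences $V_0$ from a complementary set of basis vectors realized by genuinely one-to-one sequences. Concretely, I would select a subset $\{t_j\}_{j<\lambda}$ of the $g_i$'s (with $\lambda\le\gamma$, and $\lambda$ finite or countable since $\gamma<\kappa\le\omega_1$) whose classes, modulo constants, form a basis for $V/V_0$, and extend by constant functions to a basis of $V$. Then I would perform the same diagonalization as in Lemma \ref{lemma.reduction.free.abelian.group}: enumerate the nonzero $\mathbb{Z}_P$-linear functionals $r:\lambda\longrightarrow\mathbb{Z}_P$ and thin $\omega$ to an infinite set $A$ so that each sequence $(\sum_{j}r(j).t_j(n):\, n\in A)$ becomes one-to-one, using that no such combination is $\mathcal{U}$-equivalent to a constant. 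This produces $(t_j|_A)_{j<\lambda}\in\mathcal{F}_P(G,\lambda)\subseteq\mathcal{F}_P(G,<\kappa)$.

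Finally, by hypothesis the sequence $((t_j(n))_{j<\lambda}:\, n\in A)$ has an accumulation point in $G^\lambda$, so along some ultrafilter $\mathcal{V}$ on $A$ it converges, and since the finitely or countably many remaining basis functions are constant, I obtain a $\mathcal{V}$-limit of the full basis-indexed sequence. Expressing each $g_i$ as a fixed $\mathbb{Z}_P$-linear combination of the $t_j$ on $A$ and applying continuity of the group operations, the accumulation point of $(t_j)_{j<\lambda}$ pushes forward to an accumulation point of $((g_i(n))_{i<\gamma}:\, n\in A)$ in $G^\gamma$. \textbf{The main obstacle I anticipate is the infinite case $\gamma=\omega$ (or $\lambda$ countable), which did not arise in the non-torsion Lemma \ref{lemma.reduction.free.abelian.group}:} one must ensure the diagonalization thinning $\omega$ to $A$ can simultaneously handle countably many linear functionals $r$, and that the passage through the ultrapower $\mathcal{V}$-limit legitimately yields a point of $G^\omega$ rather than merely of each finite projection. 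I expect this to be manageable because there are only countably many finite-support functionals to diagonalize against, so a single inductive construction of $A$ suffices, and the $\mathbb{Z}_P$-vector space structure guarantees a well-defined basis even when $\lambda$ is infinite.
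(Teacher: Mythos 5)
Your proposal is correct and follows essentially the same route as the paper's proof: pass to the $\mathcal{U}$-quotient, use the $\mathbb{Z}_P$-vector space structure to split off the constant classes and choose a basis $\{[t_j]\}_{j<\lambda}$ modulo constants, diagonalize to thin $\omega$ to a set $A$ on which $(t_j|_A)_{j<\lambda}\in\mathcal{F}_P(G,\lambda)$ with $\lambda<\kappa$, and push the hypothesized accumulation point forward through the fixed finite $\mathbb{Z}_P$-linear combinations. The only detail you leave implicit is that the inductive choice of $A$ must also diagonalize into the countably many sets $C_m\in\mathcal{U}$ on which each $g_m$ equals its finite combination of the $t_j$'s (the paper takes them $\subseteq$-decreasing and picks $n_q\in C_q$, so $A\setminus C_m$ is finite for every $m$ and the representations survive to the $\mathcal{V}$-limit), but this is absorbed by the single induction you already describe.
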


\begin{proof}  Fix $\alpha <\kappa$ and  arbitrary functions $g_i:\, \omega \longrightarrow G$,  for each $i<\alpha$.
	We want to show that $((g_i(n))_{i<\alpha}:\, n \in \omega)$ has an accumulation point in $G^\alpha$.
Fix a free ultrafilter ${\mathcal U}$  on $\omega$. Then, the Abelian group $K$ generated by $\{ [g_i]_{\mathcal U}:\, i<\alpha\}$ is a vector space of order $P$ of finite (possibly $0$) or countable infinite dimension. If the dimension is $0$ then the ${\mathcal U}$-limit of $(g_i(n):\, n \in \omega)$ is $0$, for each $i<\alpha$ and we are done. Assume that the dimension is positive and let $K_0$ be the subgroup of $K$ of the classes of constant sequences. Since $K$ is a vector space over the field ${\mathbb Z}_P$, there exists a subgroup $K_1$ of $K$ such that $K$ is the direct sum of $K_0$ and $K_1$. Let $\{ t^j_i :\, i<\lambda_j\}$ be such that $\{ [t^j_i]_{\mathcal U} :\, i<\lambda_j\}$ is a basis for $K_j$, for each $j<2$. We can choose $t^0_i$ to be constant functions, for each $i<\lambda_0$.

Let $C_m$ be an element of ${\mathcal U}$  for each $m<\alpha$, be such that there exists $s^j_m:\,\lambda_j \longrightarrow {\mathbb Z}$ with finite support for each $j<2$ such that $g_m(n) =\sum_{l\in\supp s^0_m}s^0_m(l).t^0_l(n)+\sum_{l\in \supp s^1_m}s^1_m(l).t^1_l(n)$, for each $n\in C_m$. Without loss of generality, we can assume that $\{ C_m :\, m \in \omega \}$ is a $\subseteq$-decreasing sequence.

 Enumerate as $\{r_m:\, m \in \omega \}$ the set of all functions $r:\, \alpha_1 \longrightarrow {\mathbb Z}_P$ with nonempty finite support. Note that the set $\{\sum_{j\in \supp r} r(j).t^1_j(n):\, n \in C\}$ is infinite, for each $C\in {\mathcal U}$.

Choose inductively $n_q > \text{ max } \{n_0, \ldots , n_{q-1}\}$ such that
 $n_q \in$ 
 
 \noindent
  $ \bigcap_{a,b \leq q \wedge p<q} C_q\setminus \{ n \in \omega :\, \sum_{l \in \supp r} r_a(l).t^1_l (n)=\sum_{l \in \supp r} r_b(l).t^1_l (n_p) \}$.

The set  $A= \{n_q:\, q\in \omega \}$ is such that $ ( \sum_{l\in \supp r_q} r_q(l).t^1_l(n) :\, n \in A \setminus n_q )$
is one-to-one for each $q \in \omega $. Therefore $( t^1_i)_{ i<\lambda_1}$ is in ${\mathcal F}_P(G,\lambda_1)$, where the domain of each $t^1_i$ is $A$. By hypothesis, $( (t_i(n))_{i<\lambda_1}:\, n \in A )$ has an accumulation point  in $G^{\lambda_1}$.

The proof of some arguments above, as well as rest of the proof is as in the proof of Lemma \ref{lemma.reduction.free.abelian.group}.
\end{proof}

\begin{lemma} \label{torsion.tree.first.step} Let $H$ be a topological group  of prime order $P$ without non-trivial convergent sequences such that
$H^\lambda$ is countably compact for some $\lambda\leq \omega$, $A_0$ and $A_1$ infinite subsets of $\omega$ and  $\{y_{i,n}:\, i<\lambda \mbox{ and } n \in A_0 \cup A_1\}$ be a subset of $H$. Suppose that the function $s_j:\, \lambda \longrightarrow  {\mathbb Z}_P$ has finite support and the finite set $F_j$ are such that $\{ \sum_{i \in \supp s_j} s_j(i).y_{i,n}:\, n \in A_j\setminus F_j\}$ is one-to-one,   for each $j<2$.

If $\{(y_{i,n})_{i<\lambda}:\, n \in A_j\}$ has an accumulation point in an open set $U_j$ of $H^\lambda$ then there exist $B_j$  an infinite subset $A_j$ and $V_j=\prod_{i<\lambda}V_{i}^j$ a basic open subset of $U_j$ for each $j<2$ such that

 \begin{enumerate}

\item $\overline{V_j}\subseteq U_j$ for each $j<2$;
  \item $ \overline{ \{(y_{i,n})_{i<\lambda}:\, n \in B_j\} }\subseteq V_j$, for each $j<2$;

  \item $\overline{\sum_{i\in\supp s_0}s_0(i).V_{0,i}} \cap \overline{ \sum_{i\in\supp s_1}s_1(i). V_{1,i}} = \emptyset$ and

\item $0\notin \overline{\sum_{i\in \supp  s_0}s_0(i).V_{0,i}} \cup \overline{ \sum_{i\in \supp s_1}s_1(i). V_{1,i}}$.
 \end{enumerate}
\end{lemma}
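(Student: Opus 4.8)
The plan is to follow the proof of Lemma~\ref{nontorsion.tree.first.step} essentially verbatim, since the torsion hypotheses play no structural role in that argument; the only formal differences are that $\lambda$ may now be infinite and that the coefficient functions $s_j$ take values in ${\mathbb Z}_P$ rather than ${\mathbb Z}$. So, fixing $j<2$, I would use the hypothesis to get an accumulation point $(a_i)_{i<\lambda}\in U_j$ of $((y_{i,n})_{i<\lambda}:\,n\in A_j)$, and, since $H^\lambda$ is a topological group and hence regular, pick an open $O_j$ with $(a_i)_{i<\lambda}\in O_j$ and $\overline{O_j}\subseteq U_j$. Then $C_j=\{n\in A_j\setminus F_j:\,(y_{i,n})_{i<\lambda}\in O_j\}$ is infinite, and because $\supp s_j\neq\emptyset$ the sequence $(\sum_{i\in\supp s_j}s_j(i).y_{i,n}:\,n\in C_j)$ is still one-to-one. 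Here I use that $H$, being the projection (a continuous image) of the countably compact group $H^\lambda$, is itself countably compact and has no non-trivial convergent sequences; hence this one-to-one sequence has infinitely many accumulation points, exactly as asserted in Lemma~\ref{nontorsion.tree.first.step}. This lets me choose $c_0,c_1\in H$ with $c_0\neq c_1$, both nonzero, with $c_j$ an accumulation point of $(\sum_{i\in\supp s_j}s_j(i).y_{i,n}:\,n\in C_j)$, then separate them by open $W_0,W_1$ with $\overline{W_0}\cap\overline{W_1}=\emptyset$ and $0\notin\overline{W_0}\cup\overline{W_1}$, together with shrinkings $W_j^*$ satisfying $c_j\in W_j^*\subseteq\overline{W_j^*}\subseteq W_j$.

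Next, the set $D_j=\{n\in C_j:\,\sum_{i\in\supp s_j}s_j(i).y_{i,n}\in W_j^*\}$ is infinite, so, again by countable compactness of $H^\lambda$, the sequence $((y_{i,n})_{i<\lambda}:\,n\in D_j)$ has an accumulation point $(b_{j,i})_{i<\lambda}\in\overline{O_j}\subseteq U_j$, and by construction $\sum_{i\in\supp s_j}s_j(i).b_{j,i}\in\overline{W_j^*}\subseteq W_j$. The one genuinely new point is that the weighted-sum map $(h_i)_{i\in\supp s_j}\mapsto\sum_{i\in\supp s_j}s_j(i).h_i$ is continuous on $H^{\supp s_j}$ (a finite sum of continuous maps, each $s_j(i)\in{\mathbb Z}_P$ acting as iterated addition) and that $\supp s_j$ is finite. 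Since $\lambda$ may be infinite, a basic open box $V_j=\prod_{i<\lambda}V_i^j$ has $V_i^j=H$ for all but finitely many $i$; I choose it so that $(b_{j,i})_{i<\lambda}\in V_j$ and $\overline{V_j}\subseteq U_j$ (possible by regularity of $H^\lambda$), and, shrinking the finitely many coordinates in $\supp s_j$ further via continuity, so that $\sum_{i\in\supp s_j}s_j(i).V_i^j\subseteq W_j$. Passing to closures gives $\overline{\sum_{i\in\supp s_j}s_j(i).V_i^j}\subseteq\overline{W_j}$, whence the disjointness and $0$-avoidance of the $\overline{W_j}$ yield conditions $3)$ and $4)$, while the choice of $V_j$ gives condition $1)$. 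Finally I pick $V_j^*$ with $(b_{j,i})_{i<\lambda}\in V_j^*\subseteq\overline{V_j^*}\subseteq V_j$ and set $B_j=\{n\in D_j:\,(y_{i,n})_{i<\lambda}\in V_j^*\}$, which is infinite and delivers condition $2)$.

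I do not expect a serious obstacle, as the argument is a transcription of Lemma~\ref{nontorsion.tree.first.step}. The only real care is the bookkeeping for infinite $\lambda$: one must keep every condition involving the weighted sums confined to the finitely many coordinates of $\supp s_j$, so that a single legitimate basic box — trivial in cofinitely many coordinates — can simultaneously satisfy $\overline{V_j}\subseteq U_j$ and $\sum_{i\in\supp s_j}s_j(i).V_i^j\subseteq W_j$; this works because $\overline{V_j}\subseteq U_j$ is arranged first, constraining a finite set of coordinates, and the sum condition then only further shrinks the finitely many coordinates in $\supp s_j$, the union remaining finite. The torsion of $H$ and the passage from ${\mathbb Z}$ to ${\mathbb Z}_P$ change nothing, since the whole proof uses only continuity of finite coefficient combinations and the absence of non-trivial convergent sequences.
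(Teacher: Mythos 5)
Your proposal is correct and matches the paper exactly: the paper's own proof of this lemma is literally ``as in Lemma~\ref{nontorsion.tree.first.step} with $M$ replaced by $\lambda$,'' and your transcription carries that out faithfully. The two points you single out --- confining all weighted-sum constraints to the finitely many coordinates in $\supp s_j$ so that the basic box remains legitimate when $\lambda=\omega$, and reading the coefficients $s_j(i)\in{\mathbb Z}_P$ as iterated addition on a group of exponent $P$ --- are precisely the (routine) adjustments the paper leaves implicit.
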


\begin{proof}
The proof is as in Lemma \ref{nontorsion.tree.first.step} if we replace $M$ by $\lambda$. 
\end{proof}

\begin{proposition} \label{torsion.tree.general.step}

 Let $H$ be a topological group of order $P$ without non-trivial convergent sequences such that
$H^\gamma$ is countably compact, for some cardinal $0<\gamma \leq\omega$. Let $((y_{i,n})_{i<\lambda}:\,  n \in A) \in {\mathcal F}_P(H,\lambda)$, for $\lambda \leq \gamma$. There exists a family $\{a_{i,f}:\, i<\lambda \text{ and } f \in \ ^\omega 2\} \subseteq H$ such that

$A)$ $\{a_{i,f}:\, i<\lambda\}$ is an independent set whose elements have order $P$, for each $f \in \ ^\omega 2$;

$B)$ $(a_{i,f})_{i<\lambda}$ is an accumulation point of $\{(y_{i,n})_{ i<\lambda} :\,  n \in A\}$, for each $f \in \ ^\omega 2$ and

$C)$ $\sum_{i\in \supp s_0}s_0(i).a_{i,f_0}\neq \sum_{i\in \supp s_1}s_1(i).a_{i,f_1}$, for  $s_j:\, \lambda\longrightarrow {\mathbb Z}_P$ with nonempty finite support for $j<2$ and $f_0,f_1 \in \ ^\omega 2$ with $f_0 \neq f_1$.
\end{proposition}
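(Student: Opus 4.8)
The author has explicitly flagged this as the torsion analogue of Proposition \ref{nontorsion.tree.general.step}, so my plan is to mirror that proof almost verbatim, with the free-abelian bookkeeping replaced by $\mathbb{Z}_P$-linear-algebra bookkeeping. The key structural idea is unchanged: I would build a fusion tree of basic open sets $\{V_p:\, p\in\bigcup_{k<\omega}{}^k2\}$ and shrinking index sets $\{A_p\}$, indexed by the binary tree, satisfying exactly the analogues of conditions $a)$--$e)$ — namely $\overline{V_{p^\wedge j}}\subseteq V_p$, the containment $\overline{\{(y_{i,n})_{i<\lambda}:\,n\in A_p\}}\subseteq V_p$, the disjointness $\overline{\sum_{i\in\supp s_0}s_0(i).V_{p_0,i}}\cap\overline{\sum_{i\in\supp s_1}s_1(i).V_{p_1,i}}=\emptyset$ for $p_0\neq p_1$ of length $k$ and $s_j:\lambda\longrightarrow\mathbb{Z}_P$, the zero-avoidance $0\notin\overline{\sum_{i\in\supp s}s(i).V_{p,i}}$, and $A_{p^\wedge j}\subseteq A_p$. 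Since $\mathbb{Z}_P$ is finite, at level $k$ there are only finitely many support functions $s:\lambda\cap\supp s\longrightarrow\mathbb{Z}_P$ to handle (after restricting attention, at stage $k$, to those $s$ with $\supp s\subseteq k$, say), so the inductive enumeration $\{((s_j^0,p_j^0),(s_j^1,p_j^1)):\,j<t\}$ of the finitely many pairs to be separated goes through without change.

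The inductive step is where I would invoke the torsion first-step Lemma \ref{torsion.tree.first.step} in place of Lemma \ref{nontorsion.tree.first.step}: given the current $A_r=A^{l-1}_{p^r_{l-1}}$ and $U_r=V^{l-1}_{p^r_{l-1}}$, the one-to-one hypothesis on $\sum_{i\in\supp s_j}s_j(i).y_{i,n}$ supplied by membership in $\mathcal{F}_P(H,\lambda)$ lets me apply that lemma to shrink to $A^l_{p^r_{l-1}}$ and basic open $V^l_{p^r_{l-1}}$ achieving the separation $iii)$ and zero-avoidance $iv)$ for the pair currently being processed, while freezing all other nodes as in condition $II)$. After exhausting the finitely many pairs at level $k$ I set $V_p=V_p^t$, $A_p=A_p^t$, and conditions $a)$--$e)$ follow exactly as in the non-torsion proof.

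Once the tree exists, I extract the branches: by $e)$ I choose $A_f\subseteq A$ with $A_f\setminus A_{f|_n}$ finite for every $f\in{}^\omega2$ and $n<\omega$, and by countable compactness of $H^\lambda$ (applied to the $\mathcal{F}_P$-sequence restricted to $A_f$) I let $(a_{i,f})_{i<\lambda}$ be an accumulation point of $((y_{i,n})_{i<\lambda}:\,n\in A_f)$, giving condition $B)$. Fixing a nonzero $s:\lambda\longrightarrow\mathbb{Z}_P$ of finite support and choosing $k$ large enough that $\supp s$ is ``visible'' at level $k$, condition $b)$ places $\sum_{i\in\supp s}s(i).a_{i,f}$ inside $\overline{\sum_{i\in\supp s}s(i).V_{f|_k,i}}$, which by $d)$ omits $0$; this yields $\sum_{i\in\supp s}s(i).a_{i,f}\neq 0$, i.e. $\mathbb{Z}_P$-linear independence and order exactly $P$, so condition $A)$ holds. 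Finally, for $f_0\neq f_1$ and support functions $s_0,s_1$, choosing $k$ with $f_0|_k\neq f_1|_k$ and both supports visible, condition $c)$ forces $\sum_{i\in\supp s_0}s_0(i).a_{i,f_0}$ and $\sum_{i\in\supp s_1}s_1(i).a_{i,f_1}$ into disjoint closed sets, giving condition $C)$.

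I expect the only genuine subtlety — the main obstacle — to be the case $\lambda=\omega$, which does not arise in Proposition \ref{nontorsion.tree.general.step} (there $M$ is a fixed positive integer). When $\lambda$ is infinite, a support function $s:\lambda\longrightarrow\mathbb{Z}_P$ has finite but unbounded support, so ``all pairs at level $k$'' must be organized by also bounding the support $\supp s\subseteq k$; otherwise infinitely many constraints would have to be met at a single finite stage. The fusion still works because each individual $s$ has finite support and is therefore caught at some finite level, and the verification of $A)$ and $C)$ only ever fixes finitely many $s$ at a time, so choosing $k$ large enough captures them; but one must be careful to state conditions $c)$ and $d)$ with the bound $\supp s\subseteq[-k,k]$-analogue (here $\supp s\subseteq k$) rather than an unrestricted one, exactly so that at each stage only finitely much work is required while every support function is eventually addressed.
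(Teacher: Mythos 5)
Your proposal is correct and follows essentially the same route as the paper's own proof: the paper likewise runs the fusion-tree argument of Proposition \ref{nontorsion.tree.general.step} with Lemma \ref{torsion.tree.first.step} substituted at the inductive step, and handles the $\lambda=\omega$ case exactly as you anticipate, by restating conditions $c)$ and $d)$ so that at level $k$ only those $s:\lambda\longrightarrow{\mathbb Z}_P$ with support contained in $k$ (respectively $k\cap\lambda$) are processed, every finite-support $s$ being caught at some finite level. Your extraction of the branches and verification of $A)$--$C)$ also matches the paper's argument verbatim.
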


\begin{proof} We modify the proof of Proposition \ref{nontorsion.tree.general.step} and give here a sketch and indicate where changes had to be made.
	
	We will construct a tree of basic open subsets of $H^\lambda$, $\{ V_{p}:\, p \in \bigcup _{k<\omega} \ ^k 2\}$ and a tree of subsets of $A$, $\{A_p:\, p \in \bigcup _{k<\omega} \ ^k 2\}$ satisfying conditions $a)-d)$. Conditions $c)$ and $d)$ are different from the proof of Proposition \ref{nontorsion.tree.general.step}.
	
	$a)$ $\overline{V_{p^\wedge j}}\subseteq V_{p}$,  for each  $ p \in\bigcup _{k<\omega} \ ^k 2$ and $j<2$;
	
	$b)$  $ \overline{ \{(y_{i,n})_{i<\lambda}:\, n \in A_p\} }\subseteq V_p$,  for each $ p \in \bigcup _{k<\omega} \ ^k 2$;
	
		\smallskip
	
	In $c)$ and $d)$ below, we recall that $V_{p,i}$ is the $i$-th coordinate of the basic open set $V_{p}$, for each $i<\lambda$:
	
	\smallskip
	
	$c)$ $\overline{\sum_{i\in \supp s_0}s_0(i).V_{p_0,i}} \cap \overline{ \sum_{i\in \supp s_1}s_1(i). V_{p_1,i}} = \emptyset$, whenever
	$|p_0|=|p_1|=k$, $s_j:\, \lambda \longrightarrow  {\mathbb Z}_P$ has non-empty support contained in $k$  and
	$p_0\neq p_1$;

	$d)$ $0\notin \overline{\sum_{i\in \supp s}s(i).V_{p,i}}$, whenever $ p \in\bigcup _{k<\omega} \ ^k 2$ and $s:\, \lambda \longrightarrow  {\mathbb Z}_P$  has nonempty support contained in $k \cap \lambda$ and
	
	$e)$ $A_{p^ \wedge j}\subseteq A_p$, for each $p \in \bigcup _{k<\omega} \ ^k 2$ and $j<2$.
	
	\medskip
	
	Let us assume for a moment that there are sets satisfying $a)$-$e)$. By condition $e)$,  we can find $A_f \subseteq A$ such that $A_f \setminus A_{f|_n}$ is finite, for each $f \in \ ^\omega 2$ and for each $n \in \omega$.
	
	Since $H^\gamma$ is countably compact and $\lambda\leq \gamma$, there exists  $(a_{i,f})_{i<\lambda}$ in $H^\lambda$ that is an accumulation point of $\{(y_{i,n})_{ i<\lambda }:\, n \in A_f\}$. It then follows that condition $B)$ is satisfied.
	
	Fix a function $s:\, \lambda \longrightarrow {\mathbb Z}$ whose support is a nonempty finite set.  Choose $k \in {\mathbb N}$ such that $\dom s \subseteq k$. Then $\sum_{i\in \supp s}s(i).a_{i,f} \in \overline{\sum_{i\in \supp s}s(i).V_{f|_{k},i}}$ by condition $b)$. The same argument in the proof of Proposition \ref{nontorsion.tree.general.step} shows that condition $A)$ is satisfied.
	
	Fix  $s_j:\, \lambda \longrightarrow {\mathbb Z}$ with nonempty finite support for $j<2$ and $f_0, f_1 \in \ ^\omega 2$  with $f_0 \neq f_1$. Let $k \in {\mathbb N}$ be such that $\dom s_0 \cup \dom s_1 \subseteq k$ and $f_0 |_k \neq f_1|_k$. The same argument now shows that condition $C)$ is satisfied.

	 Hence all conditions $A)$-$C)$ are satisfied.
	
	\smallskip
	
	We return to the construction of the sets satisfying condition $a)-e)$. Set $V_\emptyset =G$ and $A_\emptyset = A$. Clearly all conditions are satisfied.
	
	Suppose that $V_p$ and $A_p$ are constructed for each $p \in \bigcup_{k<m}  \ ^k 2$ and satisfy the inductive conditions $a)$-$e)$.
	
	For each $p \in \ ^{m-1} 2$, set $A_{p^ \wedge 0}^0=A_{p^\wedge 1}^0=A_p$ and $V_{p^ \wedge 0 }^0=V_{p^\wedge 1}^0=V_p$. Enumerate as $\{ ((s^0_j , p^0_j), (s^1_j , p^1_j)):\, j<t\}$ all the pairs
	$((s_0, p_0) , (s_1,p_1))$ such that $p_i \in \ ^m 2$ with $p_0\neq p_1$ and $s_ l :\, \lambda \cap m \longrightarrow  {\mathbb Z}_P$, for each $l<2$.
	
	Suppose we have defined $A_p^j$ and $V_p^j$ for each $j<l \leq t$ satisfying:
	
	$I)$ $\overline{V_p ^{j+1}}\subseteq V_p ^j$ and $A_p^{j+1} \subseteq A_p^j$ if $p \in \{p_j^0, p_j^1\}$ and $j+1<l$;
	
	$II)$ $V_p ^{j+1}= V_p ^j$ and $A_{p}^{j+1}=A_{p}^j$ if $p \in \ ^m 2 \setminus \{p_j^0, p_j^1\}$ and $j+1<l$;
	
	$III)$ $ \overline{ \{(y_{i,n})_{i<\lambda}:\, n \in A_p^j\} }\subseteq V_p^j$, for each $p \in \ ^m 2$ and  $j<l$;
	
	$IV)$ $\overline{\sum_{i\in \supp s_j^0}s_j^0(i).V^{j+1}_{p^0_j,i}} \cap \overline{\sum_{i\in \supp s_j^1}s_j^1(i).V^{j+1}_{p^1_j,i}} = \emptyset$,  for each $j+1< l$ and
	
	$V)$ $0\notin \overline{\sum_{i\in \supp s_j^0}s_j^0(i).V^{j+1}_{p_j^0,i}} \cup \overline{ \sum_{i\in \supp s_j^1}s_j^1(i). V^{j+1}_{p_{j}^1 ,i}}$.

	By  $III)$, we can apply Lemma
	\ref{torsion.tree.first.step}  on $A_r=A_{p_{l-1}^r}^{l-1}$ and $U_r=V_{p_{l-1}^r}^{l-1}$
	to obtain $A_{p_{l-1}^r}^l \subseteq A_{p_{l-1}^r}^{l-1}$ and a basic open set $ V_{p_{l-1}^r}^l$, for each $r<2$, that satisfy:
	
	$i)$ $\overline{V_{p_{l-1}^r}^l}\subseteq V_{p_{l-1}^r}^{l-1}$, for each $r<2$;
	
	$ii)$ $ \overline{ \{(y_{i,n})_{i<\lambda}:\, n \in A_{p_{l-1}^r}^l\} }\subseteq V_{p_{l-1}^r}^l$, for each $r<2$;
	
	$iii)$ $\overline{\sum_{i\in \supp s^0_{l-1}}s^0_{l-1}(i).V_{p_{l-1}^0,i}^l} \cap \overline{ \sum_{i\in \supp s^1_{l-1}}s^1_{l-1}(i). V_{p_{l-1}^1,i}^l} = \emptyset$ and
	
	$iv)$ $0\notin \overline{\sum_{i\supp s^0_{l-1}}s^0_{l-1}(i).V_{p_{l-1}^0,i}^l} \cup \overline{ \sum_{i\in \supp s^1_{l-1}}s^1_{l-1}(i). V_{p_{l-1}^1,i}^l}$.

	Conditions $I)-V)$ for $l$ follow as in the proof of Proposition \ref{nontorsion.tree.general.step} using conditions $i)-iv)$.

	Now, define $V_p=V_p^t$ and $A_p=A_p^t$. We will check that conditions $a)$-$e)$ are satisfied.
	For each $p \in \ ^m 2$, there exists $j< t$ such that $p=p^0 _j$. 
	
	Conditions $a)$ and $b)$ follow from conditions $I)-V)$ as in the proof of Proposition \ref{nontorsion.tree.general.step}.

	Fix a pair $(s_0,p_0), (s_1,p_1)$ with $p_0\neq p_1$ and $|p_0|=|p_1|=m$, $s_r:\, \lambda \longrightarrow  {\mathbb Z}_P$  whose support is a finite nonempty set, for each $r<2$. Let $j<t$ be such that $(s_r,p_r)=(s^r_j,p^r_j)$, for each $r<2$. Condition $c)$ holds applying the same argument as in Proposition \ref{nontorsion.tree.general.step}.

	Condition $d)$ and $e)$ also follow from the same arguments  in Proposition \ref{nontorsion.tree.general.step}.
\end{proof}

We state now the topology refinement for groups of prime order $P$.

\begin{proposition}\label{proposition.refining.P.subgroup} Let $L$ be a nonempty subset of $[\omega , {\mathfrak c}[$ such that ${\mathfrak c} \setminus L$ has cardinality ${\mathfrak c}$. Let $\kappa$ be a cardinal $\leq \omega_1$ and $\{ (g_{\alpha ,i})_{i< \lambda_\alpha}:\, \alpha \in [\omega , {\mathfrak c}[ \setminus L \}$ be an enumeration of ${\mathcal F}_P(<\kappa)$ such that
$\bigcup_{i<\lambda_\alpha} \supp g_{\alpha, i} \subseteq \alpha \times \omega$ for each $\alpha< {\mathfrak c}$. Let $X=\{x_{\alpha ,i}:\, i<\omega \text{ and } \alpha <{\mathfrak c}\}$ be a basis for a group of prime order $P$ such that  $(x_{\alpha,i})_{ i<\lambda_\alpha}$ is an accumulation point of the sequence $\{ (x_{g_{\alpha,i}(n)})_{i<\lambda_\alpha}:\, n \in \omega \}$, for each infinite ordinal $\alpha \in L$. There exists $\{ z_{\alpha,i}:\, \alpha <{\mathfrak c} \text{ and } i\in \omega \} \subseteq \langle X \rangle \times {\mathbb Z}_P ^{\mathfrak c}$ such that

$i)$ $z_{\alpha,i}$ extends $ x_{\alpha, i}$, for each $\alpha <{\mathfrak c} \text{ and } i<\omega$;

$ii)$ $(z_{\alpha,i})_{ i<\lambda_\alpha}$ is an accumulation point of the sequence $\{ (z_{g_{\alpha,i}(n)})_{i<\lambda_\alpha}:\, n \in \omega \}$, for each ordinal $\alpha \in [\omega, {\mathfrak c}[ \setminus L$;

$iii)$ For each function $D:\, L\times \omega \longrightarrow ({\mathbb Z}_P)^\omega$, there exists a coordinate $\mu$ such that $ z_{n,i}(\mu)=D(n,i)$, for each $(n,i) \in L \times \omega$;

$iv)$ If ${\mathcal U}$ is an ultrafilter, $F_j$ is nonempty finite subsets of $L$ for $j<2$ and $r_j:\, F_j \longrightarrow {\mathbb Z}_P\setminus \{0\}$ for $j<k$ are distinct, then the ${\mathcal U}$-lim $(\sum_{n\in F_j}r_j(n).z_{n,i}:\, i \in \omega ) \neq {\mathcal U}$-lim $(\sum_{n\in F_l}r_l(n).z_{n,i}:\, i \in \omega )$ and

$v)$ If ${\mathcal U}_0$ and ${\mathcal U}_1$ are distinct ultrafilters, $F_j$ is  a nonempty finite subset of $L$ for $j<2$ and $r_j:\, F_j \longrightarrow {\mathbb Z}_P\setminus \{0\}$ for each $j<2$, then ${\mathcal U}_0$-lim $(\sum_{n\in F_0}r_0(n).z_{n,i}:\, i \in \omega )\neq {\mathcal U}_1$-lim $(\sum_{n\in F_1}r_1(n).z_{n,i}:\, i \in \omega )$.
\end{proposition}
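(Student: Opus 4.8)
The plan is to run the proof of Proposition~\ref{proposition.refining.free.abelian.subgroup} essentially unchanged, but with the target group ${\mathbb T}$ of that argument replaced by the compact group $({\mathbb Z}_P)^\omega$. The single substantive change is forced by conditions $iv)$ and $v)$: there one must keep $\mathcal U$-limits of distinct ${\mathbb Z}_P$-combinations apart, and in the non-torsion case this was done with an independent family of infinite-order elements of ${\mathbb T}$. Since ${\mathbb Z}_P$ is one-dimensional over the field ${\mathbb Z}_P$ and so has no two independent elements, I instead work in the infinite-dimensional ${\mathbb Z}_P$-space $({\mathbb Z}_P)^\omega$, fixing once and for all an independent family $\{a_\ell:\,\ell\in L\}\subseteq ({\mathbb Z}_P)^\omega$ together with two further mutually independent elements $b_0,b_1$. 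Identifying $((\mathbb{Z}_P)^\omega)^{\mathfrak c}$ with $(\mathbb{Z}_P)^{\mathfrak c}$, the $z_{\alpha,i}$ will lie in $\langle X\rangle\times(\mathbb{Z}_P)^{\mathfrak c}$ as required, a ``coordinate'' $\mu$ now naming a block returning a value in $({\mathbb Z}_P)^\omega$.

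Concretely, for each $\alpha\in[\omega,{\mathfrak c}[\setminus L$ I would set $A_\alpha=\dom g_{\alpha,0}$ and fix a free ultrafilter $\mathcal V_\alpha$ on $A_\alpha$ witnessing, via the hypothesis on $X$, that $(x_{\alpha,i})_{i<\lambda_\alpha}$ is the $\mathcal V_\alpha$-limit of $((x_{g_{\alpha,i}(n)})_{i<\lambda_\alpha}:\,n\in A_\alpha)$. The set of all $D:\, L\times\omega\to({\mathbb Z}_P)^\omega$ has cardinality ${\mathfrak c}$ (here one uses that $L$ is countable, which is exactly what keeps the count in $iii)$ down to ${\mathfrak c}$), so enumerate it as $\{D_\mu:\,\mu<{\mathfrak c}\}$. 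For each $\mu$ I define a ${\mathbb Z}_P$-linear map $\phi_\mu:\,\langle X\rangle\to({\mathbb Z}_P)^\omega$ by recursion on $\alpha$: put $\phi_\mu(x_{\alpha,i})=D_\mu(\alpha,i)$ when $\alpha\in L$; for $\alpha\in[\omega,{\mathfrak c}[\setminus L$ and $i<\lambda_\alpha$ put $\phi_\mu(x_{\alpha,i})$ equal to the $\mathcal V_\alpha$-limit of $(\phi_\mu(x_{g_{\alpha,i}(n)}):\,n\in A_\alpha)$; set all remaining basis values to $0$. The recursion is legitimate because $\supp g_{\alpha,i}(n)\subseteq\alpha\times\omega$ places the vectors $x_{g_{\alpha,i}(n)}$ at earlier stages, and the limit exists by compactness of $({\mathbb Z}_P)^\omega$; since $X$ is a basis, this assignment already determines $\phi_\mu$, with no further hypothesis (this replaces the appeal to divisibility of ${\mathbb T}$ in the non-torsion proof). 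Putting $z_{\alpha,i}=x_{\alpha,i}\,{}^\wedge(\phi_\mu(x_{\alpha,i}))_{\mu<{\mathfrak c}}$ gives $i)$ and $iii)$ at once, while $ii)$ holds because $\mathcal V_\alpha$-limits in a product are taken coordinatewise, so $(z_{\alpha,i})_{i<\lambda_\alpha}$ is the $\mathcal V_\alpha$-limit of $((z_{g_{\alpha,i}(n)})_{i<\lambda_\alpha}:\,n\in A_\alpha)$; nothing here is disturbed by $\lambda_\alpha$ being infinite.

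For $iv)$, given $\mathcal U$ and distinct pairs $(F_0,r_0),(F_1,r_1)$ with $F_j\subseteq L$ finite nonempty and $r_j:\,F_j\to{\mathbb Z}_P\setminus\{0\}$, I choose $D$ with $D(n,i)=a_n$ for $n\in F_0\cup F_1$ and all $i$; at the coordinate $\mu$ given by $iii)$ the expression $\sum_{n\in F_j}r_j(n).z_{n,i}(\mu)=\sum_{n\in F_j}r_j(n).a_n$ is constant in $i$, hence is its own $\mathcal U$-limit, and the independence of $\{a_\ell:\,\ell\in L\}$ over ${\mathbb Z}_P$ forces the two limits to differ. For $v)$, given distinct ultrafilters $\mathcal U_0,\mathcal U_1$, I split $\omega$ into disjoint $B_0\in\mathcal U_0$ and $B_1\in\mathcal U_1$, fix $m_j\in F_j$, and take $D$ with $D(m_j,i)=b_j$ for $i\in B_j$ and $D=0$ otherwise (this is well defined even if $m_0=m_1$, since $B_0\cap B_1=\emptyset$); then the $\mathcal U_j$-limit at this coordinate is $r_j(m_j).b_j$, and since $r_j(m_j)\neq 0$ and $b_0,b_1$ are independent, the two values differ.

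The only real obstacle is the one flagged at the outset: one must not attempt the argument inside ${\mathbb Z}_P$ itself, where $iv)$ and $v)$ would be unprovable, but inside the infinite-dimensional space $({\mathbb Z}_P)^\omega$, and one must at the same time keep the family of functions coded in $iii)$ of size ${\mathfrak c}$ — which is precisely what forces $L$ to be countable. Beyond this, the legitimacy of the recursion and the coordinatewise computation of $\mathcal U$-limits are verbatim as in Proposition~\ref{proposition.refining.free.abelian.subgroup}.
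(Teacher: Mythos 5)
Your proposal is correct and takes essentially the same route as the paper, whose proof of this proposition consists precisely of rerunning Proposition~\ref{proposition.refining.free.abelian.subgroup} with ${\mathbb Z}$ replaced by ${\mathbb Z}_P$, ${\mathbb T}$ by $({\mathbb Z}_P)^\omega$, and $\omega\times\omega$ by $L\times\omega$ --- your substitutions of compactness of $({\mathbb Z}_P)^\omega$ for divisibility of ${\mathbb T}$, and of an independent family $\{a_\ell:\ \ell\in L\}\cup\{b_0,b_1\}$ in the infinite-dimensional ${\mathbb Z}_P$-space for the infinite-order independent elements of ${\mathbb T}$, are exactly the intended ``minor changes.'' Your cardinality caveat is also well taken: enumerating $\{D_\mu:\ \mu<{\mathfrak c}\}$ (and indeed the satisfiability of condition $iii)$ itself, since each coordinate $\mu$ realizes only one function on $L\times\omega$) requires that there be at most ${\mathfrak c}$ functions $D:\ L\times\omega\longrightarrow({\mathbb Z}_P)^\omega$, a restriction on $|L|$ that the paper's proof leaves implicit.
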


\begin{proof} It suffices to make minor changes in the proof of Proposition \ref{proposition.refining.free.abelian.subgroup}, replacing ${\mathbb Z}$ for ${\mathbb Z}_P$, ${\mathbb T}$ by $({\mathbb Z}_P)^\omega$  and $\omega \times \omega$ by $L\times \omega$. One can check that, following the proof of  Proposition \ref{proposition.refining.free.abelian.subgroup},  if $D(\beta,i)=0$ for $\beta \in L\cap \eta $ and $i \in \omega$
	and $D=D_\mu$ then $z_{\beta,i}(\mu)=0$, for every $\beta <\eta$ and $i\in \omega$.
\end{proof}

 We use $({\mathbb Z}_P)^\omega$ to get sufficiently large countable linearly independent subsets to mimick the proof using ${\mathbb T}$ for free Abelian groups.

\begin{theorem} \label{theorem.P.omegacc.omega1not.notch}  Suppose that there exists an infinite topological group $H$ without non-trivial convergent sequences of prime order $P$ and such that $H^\gamma$ is countably compact, for every $\gamma < \kappa$ with $\kappa  \leq  \omega_1$. Then there exists a topological group $G$ of order $P$ such that $G^\gamma$ is countably compact for each $\gamma <\kappa$ and $G^\kappa$ is not countably compact.
\end{theorem}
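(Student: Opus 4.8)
The plan is to run the architecture of Theorem \ref{theorem.freeabelian.mcc.m+1not} with every ingredient replaced by its torsion counterpart. Since $H$ already has prime order $P$, I would first fix a set $L\subseteq[\omega,{\mathfrak c}[$ with $|L|=\kappa$ and $|{\mathfrak c}\setminus L|={\mathfrak c}$, and enumerate ${\mathcal F}_P(<\kappa)$ as $\{(g_{\alpha,i})_{i<\lambda_\alpha}:\alpha\in[\omega,{\mathfrak c}[\setminus L\}$ with $\bigcup_{i<\lambda_\alpha}\supp g_{\alpha,i}\subseteq\alpha\times\omega$ and each element listed ${\mathfrak c}$ times. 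Obtaining an independent family $X$ in $H$ that realizes the prescribed accumulation points exactly as in Proposition \ref{proposition.free.abelian.subgroup} but invoking Proposition \ref{torsion.tree.general.step} in place of Proposition \ref{nontorsion.tree.general.step}, I would then refine via Proposition \ref{proposition.refining.P.subgroup} to a family $Z=\{z_{\alpha,i}\}\subseteq\langle X\rangle\times({\mathbb Z}_P)^{\mathfrak c}$ satisfying its conclusions $i)$--$v)$. The $L$-columns $\{z_{\ell,i}:\ell\in L,\ i\in\omega\}$ will carry the witness to non-compactness, while the columns indexed by $[\omega,{\mathfrak c}[\setminus L$ carry the accumulation points needed for countable compactness of the small powers.

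Next I would carry out a transfinite recursion of length ${\mathfrak c}$ producing increasing chains $I_\alpha,J_\alpha\subseteq{\mathfrak c}\times\omega$, ordinals $\theta_\alpha,\rho_\alpha$, and sets ${\mathfrak U}_\alpha$ of ultrafilters, satisfying the exact torsion analogues of conditions $1)$--$10)$ of Theorem \ref{theorem.freeabelian.mcc.m+1not}, and set $G=\langle\{z_{\beta,i}:(\beta,i)\in\bigcup_{\alpha<{\mathfrak c}}I_\alpha\}\rangle$. The $I_\alpha$ absorb the witness columns $L\times\omega$ together with the accumulation-point generators $\{\rho_\alpha\}\times\lambda_{\rho_\alpha}$; the $J_\alpha$ collect the coordinates to be kept out of $G$, with $I_\alpha\cap J_\alpha=\emptyset$; and ${\mathfrak U}_\alpha$ records the ultrafilters whose relevant column-combinations have already been captured inside $\langle\{z_{\beta,i}:(\beta,i)\in I_\alpha\}\rangle$. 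That $G^\gamma$ is countably compact for every $\gamma<\kappa$ then follows exactly as in Theorem \ref{theorem.freeabelian.mcc.m+1not}: by Lemmas \ref{lemma.reduction.torsion.abelian.group} and \ref{lem.basis.to.basis.P} it suffices to accumulate every sequence coming from ${\mathcal F}_P(<\kappa)$, and condition $ii)$ of Proposition \ref{proposition.refining.P.subgroup} together with the bookkeeping through $\theta_\alpha,\rho_\alpha$ supplies those accumulation points in $G^{\lambda}$.

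For the failure of countable compactness of $G^\kappa$ I would take the sequence $(w_i)_{i\in\omega}$ in $G^{L}\cong G^\kappa$ given by $w_i=(z_{\ell,i})_{\ell\in L}$ and show it has no accumulation point. Any such point would be delivered by a free ultrafilter ${\mathcal U}$ for which the ${\mathcal U}$-limit of $(z_{\ell,i}:i\in\omega)$ lies in $G$ for every $\ell\in L$; in particular ${\mathcal U}$ lands in some ${\mathfrak U}_\alpha$, and the analogue of condition $4)$ produces a column $\ell$ whose limit lies outside $\langle\{z_{\beta,i}:(\beta,i)\in{\mathfrak c}\setminus J_\alpha\}\rangle\supseteq G$, a contradiction. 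The substance is in building the $J_\alpha$, which rests on an escape claim; this is where the argument genuinely departs from the finite case.

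The main obstacle is precisely this escape claim: at the stage where ${\mathcal U}$ first enters ${\mathfrak U}_\alpha$ one must exhibit a single column $\ell$ whose ${\mathcal U}$-limit lies outside $\langle\{z_{\beta,i}:(\beta,i)\in I_\alpha\}\rangle$. In Theorem \ref{theorem.freeabelian.mcc.m+1not} this is the finite pigeonhole $M+1>M$; here I would replace it by a rank count. If, on the contrary, all $\kappa$ column-limits were already absorbed by $\langle\{z_{\beta,i}:(\beta,i)\in I_\alpha\}\rangle$, then, since the generators added at stage $\alpha$ form a space of rank $\lambda_{\rho_\alpha}<\kappa$, the images of the $\kappa$ limits modulo $\langle\{z_{\beta,i}:(\beta,i)\in\bigcup_{\mu<\alpha}I_\mu\}\rangle$ would be $\kappa$ vectors in a space of rank $<\kappa$, hence linearly dependent; a nonzero finite ${\mathbb Z}_P$-relation among them would place the ${\mathcal U}$-limit of a nonzero column-combination inside the earlier span, so ${\mathcal U}$ was captured before stage $\alpha$, a contradiction. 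Condition $v)$ of Proposition \ref{proposition.refining.P.subgroup} guarantees that distinct ultrafilters give distinct limits, so $|{\mathfrak U}_\alpha|\le|\alpha|+\omega$ and hence $|J_\alpha|\le|\alpha|+\omega<{\mathfrak c}$ throughout, which is exactly what lets the recursion run even when ${\mathfrak c}=\omega_1=\kappa$. The one extra point to watch, invisible in the finite case because the whole special set $\omega\times\omega$ sits in $I$ from the outset, is that for $\kappa=\omega_1$ the witness columns $L\times\omega$ cannot all be placed in $I_\alpha$ before processing; here the strengthened refinement, namely condition $iii)$ valued in $({\mathbb Z}_P)^\omega$ together with the initial-segment vanishing remark following Proposition \ref{proposition.refining.P.subgroup}, is what keeps the column-limits independent modulo $\langle L\times\omega\rangle$, so that the coordinate kicked into $J_\alpha$ can always be chosen off $L$, leaving every witness column inside $G$.
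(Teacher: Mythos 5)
Your proposal is correct and follows the paper's own proof in all essentials: the same pipeline (the torsion tree argument of Proposition \ref{torsion.tree.general.step} to build $X$, the refinement of Proposition \ref{proposition.refining.P.subgroup}, the transfinite recursion with conditions $1)$--$10)$ copied from Theorem \ref{theorem.freeabelian.mcc.m+1not}, and the same witness sequence of $L$-columns), with your rank-count escape matching exactly what the paper does when $\kappa<\omega_1$ or CH fails, and your CH remedy (incremental absorption of $L\times\omega$, using condition $iii)$ valued in $({\mathbb Z}_P)^\omega$ plus the initial-segment vanishing remark to choose the $J_\alpha$-coordinate off $L$) matching the paper's Case 3. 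The only device you leave implicit, which the paper makes explicit in Claims 1 and 2 of Case 3, is that under CH the escape column must be a freshly chosen $\nu_\alpha\in L$ lying above every coordinate previously appearing in the $I_\mu$'s and $J_\mu$'s, with its entire column $\{\nu_\alpha\}\times\omega$ added to $I_\alpha$ at that very stage, so that the top coordinate in the basis expansion of its ${\mathcal U}$-limit is simultaneously off $L\times\omega$ and outside everything already used --- the mere assertion that column-limits are independent modulo $\langle L\times\omega\rangle$ would not by itself guarantee that the escaping coordinate avoids the already absorbed part of $I_\alpha$.
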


\begin{proof}

Let $\kappa \leq \omega_1$ and $L$ be a subset of cardinality $\kappa$. The only case we could not start with $L=\kappa$ is if $\kappa =\omega_1$ and the Continuum Hypothesis holds.

{\bf  Either $1)$ $\kappa <\omega_1$ or $2)$ $\kappa =\omega_1$ and $CH$ does not hold}.

 Take $L=\kappa$.

Let $Z=\{ z_{\alpha,i}:\, \alpha <{\mathfrak c} \text{ and } i\in \omega \}$ be the family in Proposition \ref{proposition.refining.P.subgroup}.

Let ${\mathfrak U}$ be the set of all ultrafilters ${\mathcal U}$ such that there exists $\xi \in \kappa$ such
that ${\mathcal U}$-limit of $\{z_{\xi,i}:\, i \in \omega\}$ is an element of the group $\langle Z \rangle$.

As in the proof of Theorem \ref{theorem.freeabelian.mcc.m+1not}, ${\mathfrak U}$ has cardinality at most ${\mathfrak c}$.

We will now obtain inductively a subset $I_\alpha \subseteq {\mathfrak c}\times \omega$ for each $\alpha \in [\omega , {\mathfrak c}[$ and the desired example will be $\langle \{z_{\beta ,i}:\, (\beta,i) \in \bigcup_{\alpha <{\mathfrak c} } I_\alpha \}\rangle$.

The sets $I_\alpha$ will satisfy the following conditions:

$1)$ $|I_\alpha|\leq |\alpha|+ \kappa$ for each $ \alpha < {\mathfrak c}$ and $\{I_\alpha:\, \alpha <{\mathfrak c}\}$ is a $\subseteq$-increasing chain in ${\mathcal P}({\mathfrak c}\times \omega)$;

$2)$ ${\mathfrak U}_\alpha $ is the set of all ultrafilters ${\mathcal U} \in {\mathfrak U}$ for which
there exists a nonempty finite support function $r:\, \kappa \longrightarrow {\mathbb Z}_P$  such that ${\mathcal U}$-lim $ (\sum_{\xi\in \supp r} r(\xi).z_{\xi,i}:\, i\in \omega )$ is an element of $\langle \{ z_{\beta , i}:\,  (\beta,i) \in I_\alpha \}\rangle$;

$3)$ $|{\mathfrak U}_\alpha|\leq |\alpha|+\kappa$ for each $ \alpha <{\mathfrak c}$ and $\{ {\mathfrak U}_\alpha:\, \alpha <{\mathfrak c}\}$ is $\subseteq$-increasing chain;

$4)$ $J_\alpha$ is a subset of ${\mathfrak c}\times \omega$ and for each
${\mathcal U}\in {\mathfrak U}_{\alpha}$, there exists $\xi <\kappa$ such that ${\mathcal U}$-limit of
$(z_{\xi,i}:\, i \in \omega ) \notin \langle \{z_{\beta ,i }:\, (\beta,i) \in {\mathfrak c} \setminus J_\alpha \} \rangle$;

$5)$ $|J_\alpha| \leq |\alpha|$ for each $\kappa \leq \alpha < {\mathfrak c}$ and $\{ J_\alpha :\, \alpha <{\mathfrak c}\}$ is a $\subseteq$-increasing chain in ${\mathcal P}({\mathfrak c}\times \omega)$;

$6)$ $\theta_\alpha \in [\omega, {\mathfrak c}[$ is the least ordinal $\theta \in {\mathfrak c}\setminus \{ \theta_\beta:\, \beta < \alpha \}$ such that $\bigcup_{n\in \omega \text{ and } i <\lambda_\theta} \text{ supp } g_{\theta , i}(n) \subseteq \bigcup_{\mu <\alpha }I_\mu$, for each $\alpha \in [\kappa , {\mathfrak c}[$;

$7)$ $\rho_\alpha \in [\omega, {\mathfrak c}[$ is such that $g_{\theta_\alpha}=g_{\rho_\alpha}$ for each $\alpha \in [\kappa , {\mathfrak c}[$;

$8)$ $\{\rho_\alpha\} \times \omega  \subseteq {\mathfrak c} \setminus ( \bigcup_{\mu <\alpha} (J_\mu \cup I_\mu))$, for each $\alpha \in [\kappa , {\mathfrak c}[$;

$9)$ $\{\rho_\alpha\} \times  \lambda_{\rho_\alpha} \subseteq I_\alpha$, for each $\alpha \in [\kappa , {\mathfrak c}[$ and

$10)$ $J_\alpha \cap I_\alpha=\emptyset$, for each $\alpha <{\mathfrak c}$.

 Once $I_\alpha$, $J_\alpha$, $\theta_\alpha$, $\rho_\alpha$, ${\mathfrak U}_\alpha$ are constructed for $\kappa \leq \alpha <{\mathfrak c} $ satisfying the conditions above, then $G=\langle \{ z_{\beta, i}:\, i\in \omega \text{ and } \beta \in \bigcup_{\alpha <{\mathfrak c}}I_\alpha\}\rangle$ is as required.

Suppose that the inductive construction is complete. The argument to show that the powers smaller than $\kappa$ are countably compact and that the $\kappa$-th power is not are as in the proof of Theorem \ref{theorem.freeabelian.mcc.m+1not}.

We will now proceed with the inductive construction:

{\bf Case 1} $[\alpha < \kappa]$. Set $I_\alpha=\kappa\times \omega$, for each $\alpha<\kappa$. 

The same argument used in the proof of Theorem \ref{theorem.freeabelian.mcc.m+1not} works if we make changes in notation and we obtain
${\mathfrak U}_\alpha=J_\alpha=\emptyset$, for each $\alpha <\kappa$.

{\bf Case 2}  $[\alpha \geq \kappa]$.

Basically, the same proof works. The only observation:

  Given $z_{{\mathcal U},n}$  the $ {\mathcal U}$-limit of $\{ z_{\xi,i}:\, i \in \omega \} $, for each $ \xi <\kappa$. As before, the cardinality of the set is bigger than the cardinality of the set added at the stage, thus, there is $n_{\mathcal U} <\kappa$ such that
$z_{{\mathcal U},n_{\mathcal U}} \notin \langle \{ z_{\beta,i}:\, (\beta,i) \in I_\alpha \}\rangle $, for each ${\mathcal U}\in {\mathfrak U}_\alpha \setminus \bigcup_{ \mu <\alpha}{\mathfrak U}_\mu$.

\smallskip 

 {\bf $3)$  The Continuum Hypothesis holds and $\kappa= \omega_1$}.
 
  We use $\omega \subseteq L \subseteq \omega_1$ such  that $L$ and $\omega_1\setminus L$ are unbounded in $\omega_1$.
 
 Let $Z=\{ z_{\alpha,i}:\, \alpha <{\mathfrak c} \text{ and } i\in \omega \}$ be the family in Proposition \ref{proposition.refining.P.subgroup}.
 
 Let ${\mathfrak U}$ be the set of all ultrafilters ${\mathcal U}$ such that for each $\xi \in \kappa$ such
 that ${\mathcal U}$-limit of $\{z_{\xi,i}:\, i \in \omega\}$ is an element of the group $\langle Z \rangle$.
 
 As in the proof of Theorem \ref{theorem.freeabelian.mcc.m+1not}, ${\mathfrak U}$ has cardinality at most ${\mathfrak c}$.
 
 We will now obtain inductively a subset $I_\alpha \subseteq {\mathfrak c}\times \omega$ for each $\alpha \in [\omega , {\mathfrak c}[$ and the desired example will be $\langle \{z_{\beta ,i}:\, (\beta,i) \in \bigcup_{\alpha <{\mathfrak c} } I_\alpha \cup (L\times \omega)\}\rangle$. 
 
 The sets $I_\alpha$ will satisfy the following conditions:
 
 $1)$ $|I_\alpha| <\omega_1={\mathfrak c}$, for each $ \alpha < \omega_1$, $I_\alpha \supset (L\cap \alpha)\times \omega$ and $\{I_\alpha:\, \alpha <\omega_1\}$ is a $\subseteq$-increasing chain in ${\mathcal P}({\mathfrak c}\times \omega)$;
 
 $2)$ ${\mathfrak U}_\alpha $ is the set of all ultrafilters ${\mathcal U} \in {\mathfrak U}$ for which
 there exists $r:\, L\cap \{\beta:\, \{\beta\} \times \omega \cap I_\alpha \neq \emptyset \} \longrightarrow {\mathbb Z}_P$ with nonempty finite support such that ${\mathcal U}$-lim $ (\sum_{\xi\in F} r(\xi).z_{\xi,i}:\, i\in \omega )$ is an element of $\langle \{ z_{\beta , i}:\,  (\beta,i) \in I_\alpha \}\rangle$;
 
 $3)$ $|{\mathfrak U}_\alpha| <\omega_1 $ for each $ \alpha <\omega_1$ and $\{ {\mathfrak U}_\alpha:\, \alpha <{\mathfrak c}\}$ is $\subseteq$-increasing chain;

 $4)$ $J_\alpha$ is a subset of ${\mathfrak c}\times \omega$ and for each
 ${\mathcal U}\in {\mathfrak U}_{\alpha}$, there exists $\xi \in L$ such that ${\mathcal U}$-limit of
 $(z_{\xi,i}:\, i \in \omega ) \notin \langle \{z_{\beta ,i }:\, (\beta,i) \in {\mathfrak c} \setminus J_\alpha \} \rangle$;
 
 $5)$ $|J_\alpha| \leq |\alpha|$ for each $\omega \leq\alpha < {\mathfrak c}$ and $\{ J_\alpha :\, \alpha <{\mathfrak c}\}$ is a $\subseteq$-increasing chain in ${\mathcal P}({\mathfrak c}\times \omega)$;
 
 $6)$ $\theta_\alpha \in [\omega, {\mathfrak c}[$ is the least ordinal $\theta \in {\mathfrak c}\setminus \{ \theta_\beta:\, \beta < \alpha \}$ such that $\bigcup_{n\in \omega \text{ and } i <\lambda_\theta} \text{ supp } g_{\theta , i}(n) \subseteq \bigcup_{\mu <\alpha }I_\mu$, for each $\alpha \in [\omega , {\mathfrak c}[$;
 
 $7)$ $\rho_\alpha \in [\omega, {\mathfrak c}[$ is such that $g_{\theta_\alpha}=g_{\rho_\alpha}$, whenever $\omega \leq \alpha <{\mathfrak c}$;
 
 $8)$ $\{\rho_\alpha\} \times \omega  \subseteq {\mathfrak c} \setminus ( \bigcup_{\mu <\alpha} (J_\mu \cup I_\mu))$, for each $\omega \leq \alpha < {\mathfrak c}$;
 
 $9)$ $\{\rho_\alpha\} \times \lambda_{\rho_\alpha} \subseteq I_\alpha$, for each $\omega \leq \alpha <{\mathfrak c}$ and
 
 $10)$ $J_\alpha \cap (L\cup I_\alpha)=\emptyset$, for each $\alpha <{\mathfrak c}$.
 
 \smallskip
 Before proving the conditions $1)-10)$, we note that the group 
   $G$ defined as $\langle \{z_{\beta ,i}:\, (\beta,i) \in \bigcup_{\alpha <{\mathfrak c} } I_\alpha \cup (L\times \omega)\}\rangle$ is as required. Using similar arguments as before,  one can show that $G^\omega$ is countably compact and $G^{\omega_1}$ is not countably compact. 
   
   \smallskip
  
  {\bf Case 1}. $I_\alpha = (\omega \times \omega) \cup (L \cap \omega )\times \omega=(L\cap \omega) \times \omega$. Then ${\mathfrak U}_\alpha=J_\alpha =\emptyset$, for each $\alpha <\omega$. The argument is similar as before.
  
  \smallskip
  
  Assume $\alpha\geq \omega$ and that the induction has been carried out  for $\mu <\alpha$.
  \smallskip
  
  {\bf Case 2}. The definition of $\theta_\alpha$ and $\rho_\alpha$ are as before and conditions $5)-8)$ are satisfied.
  
  Fix $\nu_\alpha \in L$ such that $\nu_\alpha
  > \rho_\alpha$ and $\nu_\alpha > \beta$ for each $(\beta, i) \in \bigcup_{\mu <\alpha}I_\mu \cup J_\mu$
  This will be used to define $J_\alpha$ later.

   Define $I_\alpha=\bigcup_{\mu <\alpha}I_\mu \cup (\{\rho_\alpha \} \times \lambda_{\rho_\alpha}) \cup (\{\nu_\alpha\} \times \omega)$. Then condition $1)$ and $9)$ are satisfied. 
   
   Define ${\mathfrak U}_\alpha$ as in condition $2)$. Similarly as before, condition $3)$ is satisfied.
   
   The definition of $J_\alpha$ is different and uses $\nu_\alpha$. Fix ${\mathcal U}\in {\mathfrak U}_\alpha$. Let $z_{\mathcal U}$ be the ${\mathcal U}$-limit of $(z_{\nu_\alpha, i}):\, i \in \omega)$. Let $r \in {\mathbb Z}^{({\mathfrak c}\times \omega)}$ be the such that $z=z_r$. 
   
   {\em Claim 1}. The support of $r$ is not a subset of $ \nu_\alpha \times \omega$. If that is the case, fix $a\in ({\mathbb Z}^P)^\omega$ of order $P$ and define  $D:\, L\times \omega \longrightarrow ({\mathbb Z}^P)^\omega$ such that $D(\beta,i)=a$ if $(\beta,i) \in ( \{\nu_\alpha\}\times \omega) \setminus \supp r$  and $D(\beta, i)=0$ otherwise. Let $\mu$ be such that $z_{\beta,i(\mu)}=D(\beta,i)$, for each $(\beta,i)\in L\times \omega$.
 
 By the fact that $z_{\beta,i}(\mu)=0$ for each $(\beta,i)\in (L\times \omega)\cup (\bigcup_{ \beta \in \nu_\alpha \setminus L } \{\beta\} \times (\omega \setminus \lambda_\beta)$, it follows by induction that $z_{\beta,i}(\mu)$ is an accumulation point for a sequence of $0$'s. Thus, $z_{\beta,i}(\mu)=0$, for each  $\beta \in \nu_\alpha \setminus L$ and $i<\lambda_\beta $. Hence, $z_{\beta,i}(\mu)=0$ for each $(\beta,i)\in (\nu_\alpha \times \omega  ) \cup \supp r $ and
 $z_{\beta,i}(\mu)=a$ for each $(\beta,i)\in (\{\nu_\alpha\} \times \omega  ) \setminus  \supp r $.
 
 Therefore, the $\supp r \setminus \nu_\alpha \times \omega \neq \emptyset$. Let $\beta_*$ be the largest ordinal for which $F_*=\supp r \cap (\{\beta_*\}\times \omega)\neq \emptyset$. 
 
 {\em Claim 2}. $ \beta_* \not\in L$
 
Suppose by contradiction that $\beta_* \in L$. Fix an independent set $(a_{\beta_*,i}:\, (\beta_*,i) \in F_*)$ contained in $ ({\mathbb Z}^P)^\omega$ . Define $E:\, L\times \omega \longrightarrow ({\mathbb Z}^P)^\omega$ such that $ E(\beta,i)=a_{\beta,i}$ if $(\beta,i) \in F_*$ and $E(\beta,i)=0$ otherwise. Let $\mu_*$ be the coordinate for which $z_{\beta,i}(\mu_*)=E(\beta,i)$, for each $(\beta,i)\in L\times \omega$.
Then, the ${\mathcal U}$-limit of $(z_{\nu_\alpha,i}:\, i \in \omega)=0$ and $z_r(\mu_*)= z_{r|_{F_*}}(\mu_*)=\sum_{(\beta_*,i)\in F_*}r(\beta_*,i)a_{\beta_*,i}\neq 0$, which is a contradiction.

It follow from Claim 1 and 2 that there exist $(\beta_{\mathcal U}, i_{\mathcal U}) \in \supp r \setminus (L\times \omega)$ such that $\beta_{\mathcal U}> \nu_\alpha$.

Set $J_\alpha = \bigcup_{\mu<\alpha}J_\mu \cup \{(\beta_{\mathcal U},i_{\mathcal U}):\, {\mathcal U}\in {\mathfrak U}_\alpha\}$. The points $(\beta_{\mathcal U},i_{\mathcal U})$ show that condition $4)$ is satisfied and the choice of $\nu_\alpha$ and $\beta_{\mathcal U}>\nu_\alpha$ imply that $10)$ is satisfied.

Thus all conditions $1)-10)$ are satified.
\end{proof}

\begin{example} In the Random model, for each cardinal $\kappa \leq\omega_1$, there exists a topological group topology on the Boolean group $G$ of cardinality ${\mathfrak c}$ such that $G^\gamma$ is countably compact for each $\gamma<\kappa$ and
$G^\kappa$ is not countably compact.
\end{example}

\begin{proof} Szeptycki and Tomita \cite{SzTo?} showed that the Boolean group $H$ of cardinality ${\mathfrak c}$ admits a group topology without non-trivial convergent sequences such that $H^\omega$ is countably compact. Applying
Theorem \ref{theorem.P.omegacc.omega1not.notch} we obtain the desired example.
\end{proof}

As earlier mentioned, the example of Hru\v sak, van Mill, Ramos and Shelah give:

\begin{example} In ZFC, for each cardinal $\kappa \leq\omega_1$, there exists a topological group topology on the Boolean group $G$ of cardinality ${\mathfrak c}$ such that $G^\gamma$ is countably compact for each $\gamma<\kappa$ and
	$G^\kappa$ is not countably compact.
\end{example}

\section[*]{Questions and Remarks}

This work has been inspired by questions on  surveys of Professor Comfort. We list some variations of his questions that remain open.

A natural question is the preservation of the algebraic structure of the example. The following questions are in this direction:

\begin{question} Let $G$ be a torsion Abelian group of cardinality ${\mathfrak c}$ and $\kappa$ some cardinal not greater than $ \omega_1$. Suppose that  $G$ is a topological group without non-trivial convergent sequences such that $G^\alpha$ is countably compact, for each $\alpha <\kappa$ with a group topology $\tau$. Is there a group topology $\sigma$ on $G$ which in addition makes its $\kappa$-th power not countably compact?
\end{question}

Under Martin's Axiom, an Abelian group of cardinality ${\mathfrak c}$ admits a countably compact group topology if and only if it admits a countably compact group topology without non-trivial convergent sequences.

\begin{question} Assume Martin's Axiom and let $G$ be an Abelian group of cardinality ${\mathfrak c}$.
 Does it hold that $G$ has a countably compact group topology  if and only if for each $\kappa \leq {\omega}$ there is a group topology on $G$ such that $\kappa$ is the least cardinal for which $G^\kappa$ fail to be countably compact? 
\end{question}

We do not know a model without selective ultrafilters for which there exists a countably compact free Abelian group.

\begin{question} Is there a model without selective ultrafilters in which there exists a group topology without non-trivial convergent sequences in some torsion-free Abelian group whose finite powers are countably compact?
\end{question}

In \cite{BoCaTo15} the authors showed that under Martin's Axiom there exists a topology without non-trivial convergent sequences in the real line that makes its square countably compact, but not its cube:

\begin{question}
	Is there a topological group without non-trivial convergent sequences in a torsion-free Abelian group such that every power is countably compact? In particular, the algebraic group ${\mathbb R}$ or ${\mathbb T}$ has such group topology?
\end{question}

After the result of Hru\v sak, van Mill, Ramos and Shelah, some new questions arise in ZFC. The author would like to thank Professor Hru\v sak for explaining the main idea of their construction before the completion of their preprint.

\begin{question}
$a)$	Which Abelian groups $G$  admit in ZFC a countably compact group topology without non-trivial convergent sequences? 
In particular, is there a non torsion Abelian group without non-trivial convergent sequences that is countably compact?

$b)$ Classify, in ZFC, the Abelian groups of size ${\mathfrak c}$ admitting a countably compact group topology.

$c)$ Is there in ZFC a free ultrafilter $p$ and a $p$-compact group without non-trivial convergent sequences? Is there $p$ for which such topolgoy does not exist?

$d)$ Is there a Wallace semigroup in ZFC? (A countably compact both-sided cancellative topological semigroup that is not algebraically a group). 
\end{question}

We note that a positive solution for the second question in  $a)$ yields a positive solution for $d)$.

\smallskip

The author would like to thank the referee for comments that  improved the presentation of this work.

\smallskip

I first met Professor Comfort as the external examiner for my doctoral thesis and in all few occasions I saw him in conferences or asked him something by e-mail, I felt his kindness and good spirits and will always remember him for this.

\end{document}